\theoremstyle{plain}
\newtheorem{thm}{Theorem}[section]
\newtheorem{cor}[thm]{Corollary}
\newtheorem{lem}[thm]{Lemma}
\newtheorem{prop}[thm]{Proposition}
\newtheorem{rem}[thm]{Remark}
\newtheorem{defi}[thm]{Definition}
\def\sqr#1#2{{\vcenter{\vbox{\hrule height.#2pt
              \hbox{\vrule width.#2pt height#1pt \kern#1pt \vrule
width.#2pt}
              \hrule height.#2pt}}}}
\def\be{\begin{equation}}
\def\ee{\end{equation}}
\def\bb{{\beta}}
\def\ga{{\gamma}}
\def\ep{{\epsilon}}
\def\vf{{\varphi}}
\def\vr{{\varrho}}
\def\Sp{{\mathrm {Sp}}}
\def\lb{\label}
\def\bb{{\beta}}
\def\ga{{\gamma}}
\def\R{{\mathbb R}}
\def\U{{\mathbb U}}
\def\no{\noindent}
\def\bs{\bigskip}
\def\dim{\hbox{\rm dim$\,$}}
\def\({\Big (}
\def\){\Big )}
\def\[{\Big[}
\def\]{\Big]}
\def\be{\begin{equation}}
\def\bel{\begin{equation}\label}
\def\ee{\end{equation}}
\def\bea{\begin{eqnarray}}
\def\eea{\end{eqnarray}}
\def\bt{\begin{theorem}}
\def\et{\end{theorem}}
\def\bc{\begin{corollary}}
\def\ec{\end{corollary}}
\def\bl{\begin{lemma}}
\def\el{\end{lemma}}
\def\bp{\begin{proposition}}
\def\ep{\end{proposition}}
\def\br{\begin{remark}}
\def\er{\end{remark}}
\def\ba{\begin{array}}
\def\ea{\end{array}}
\def\bd{\begin{definition}}
\def\ed{\end{definition}}
\begin{document}

\title{\bf  Collision  Index and Stability of Elliptic Relative Equilibria in Planar $n$-body Problem }
\author{Xijun Hu$^{1}$\thanks{Partially supported
by NSFC(No.11425105, 11131004) and NCET, E-mail:xjhu@sdu.edu.cn }
\quad Yuwei Ou$^{2}$ \thanks{Partially supported
by NSFC(No.11131004) and CPSF(No.2015M580193), E-mail:yuweiou@163.com }
  \\ \\
$^{1}$ Department of Mathematics, Shandong University\\
Jinan, Shandong 250100, The People's Republic of China\\
$^{2}$ Chern Institute of Mathematics, Nankai University \\
Tianjin 300071, The People's Republic of China\\
}
\date{}
\maketitle
\begin{abstract}
It is well known that  a planar central configuration of the $n$-body problem 
gives rise to  solutions where each particle moves on a specific Keplerian orbit 
while the totality of the particles move on a homographic motion. When the  
eccentricity $e$ of the Keplerian orbit  belongs in $[0,1)$,  following Meyer 
and Schmidt,  we call such solutions  {\it elliptic relative equilibria} 
(shortly,  ERE).  In order to study the linear stability of ERE in the 
near-collision case,  namely when $1-e$ is small enough,
we introduce the collision index for planar central configurations.  The 
collision index  is a Maslov-type  index for heteroclinic orbits and 
orbits parametrised by half-lines that, according to the Definition given 
by authors in \cite{HP}, we shall refer to as half-clinic orbits and 
whose Definition in this context, is essentially based on a blow up technique in the 
case $e=1$.  We get the fundamental properties  of collision index and  
approximation theorems.  As applications, we give some new hyperbolic criteria 
and  prove that, generically,  the ERE of minimal central configurations are 
hyperbolic in the near-collision case,  and we  give  detailed analysis of Euler 
collinear orbits  in the  near-collision case.
     \end{abstract}

\bs

\no{\bf AMS Subject Classification:} 37J25, 70F16, 70F10,   37J45, 53D12

\bs

\no{\bf Key Words:} Maslov index,  linear stability, elliptic
relative equilibria,  Euler orbits, planar $n$-body problem

\section{Introduction}

For $n$ particles of mass $m_1,...,m_n$, let
$q_1,...,q_n\in\mathbb{R}^2$ be the position vectors,
$p_1,...,p_n\in\mathbb{R}^2$ be the momentum vectors. Setting 
$d_{i,j}=\parallel q_i-q_j\parallel$, the Hamiltonian function has
the form \bea H=\sum_{j=1}^n\frac{\parallel
p_j\parallel^2}{2m_j}-U(q_1,...,q_n), \,\ U=\sum_{1\leq j< k\leq
n}\frac{m_jm_k}{d_{jk}}. \eea $U$ will be defined on configuration
space $$
\Lambda=\{x=(x_1,\cdots,x_n)\in\mathbb{R}^{2n}\setminus\triangle:
\sum_{i=1}^nm_ix_i=0 \},$$ where
$\triangle=\{x\in\mathbb{R}^{2n}:\exists i\neq j,x_i=x_j \}$ is the
collision set.
 A central configuration is a solution
$(q_1,...,q_n)=(a_1,...,a_n)$ of \bea -\lambda m_jq_j=\frac{\partial
U}{\partial q_j}(q_1,...,q_n) \eea for some constant $\lambda$.  An easy
computation shows that $\lambda=U(a)/\cal{I}(a)>0$ where $\cal{I}(a)=\sum
m_j\|a_j\|^2$ is the moment of inertia.  Otherwise stated,   a
central configuration with $\cal{I}(a)=1$ is a critical point of the function 
$U$
constrained to the set  $\mathcal{E}=\{x\in\Lambda \mid \cal{I}(x)=1 \}$.

It is well known that a planar central configuration of the $n$-body
problem gives rise to solutions  where each particle moves on a
specific Keplerian orbit while the totality of the particles move on
a homographic motion. Following Meyer and Schmidt \cite{MS},  we  call
these solutions as {\it elliptic relative equilibria} and in shorthand notation, 
simply ERE. Specifically,  when the eccentricity $e = 0$, the Keplerian elliptic
motion becomes circular and all the bodies move around
the center of masses along circular orbits with the same frequency.
Traditionally these orbits are called {\it relative equilibria }.

As pointed out in \cite{MS}, there are two four-dimensional
invariant
 symplectic subspaces,  $E_1$ and $E_2$,
and they are associated to the translation symmetry, dilation and
rotation symmetry of the system. In other words, there is a
symplectic coordinate system in which the linearized system of the
planar $n$-body problem decouples into three subsystems on $E_1$,
$E_2$ and $E_3=(E_1\cup E_2)^{\perp}$, where $\perp$ denotes the
symplectic orthogonal complement. A symplectic matrix $\mathcal{M}$
is called spectrally stable if all eigenvalues of $\mathcal{M}$ belongs
to the unit circle $\mathbb{U}$ of the complex plan, while $\mathcal{M}$
is called hyperbolic if no eigenvalues of $\mathcal{M}$ are on
$\mathbb{U}$.  The ERE is called hyperbolic (resp. stable) if the
monodromy matrix $\mathcal{M}$  is restricted to $E_3$, that is,   
$\mathcal{M}|_{E_3}$ is hyperbolic (resp. stable).

There are many interesting results for the linear stability of ERE
(cfr. \cite{M1,M2,R1,R2} and references therein). 
Many of them, investigated  the relative
equilibria for $e$ small enough and as far as we know, only few of them 
studied the linear stability of ERE with $e\in[0,1)$. To our
knowledge, the elliptic Lagrangian solution is the only case that is
well studied. The Lagrangian solution which was discovered by
Lagrange in 1772 \cite{Lag} is the ERE of the equilateral triangle
central configuration in the planar three body problem.

It is well known that the stability of
elliptic Lagrangian solutions depend on the eccentricity $e$ and on
 \be
\beta=\frac{27(m_1m_2+m_1m_3+m_2m_3)}{(m_1+m_2+m_3)^2}.\label{massratio}\ee
Long et al. used Maslov-type index and operator theory to
study the stability problem, and  gave out a full describe of the bifurcation
graph (cfr. \cite{HLS},\cite{HS}). Moreover, Wang et al. built up
a trace formula for linear Hamiltonian systems and Sturm-Liouville
systems, and used it to give an estimate of the stability region as well as 
of the hyperbolic region \cite{HOW},\cite{HO}.

In the study of the near-collision case, that is, when  $1-e$ is small enough, a blow-up 
technique  from R. Mart\'{\i}nez, A. Sam\`{a}, C. Sim\'{o}
\cite{MSS1}  is very useful to carry over our analysis. The authors considered  $4$D linear
system depending on a small parameter $\sigma
>0$ and the singular limit
for $\sigma \rightarrow0$.  Based on it, they
computed the trace $tr_1=\lambda_1+\lambda_1^{-1}$,
$tr_2=\lambda_2+\lambda_2^{-1}$, where $\lambda_i,\lambda_i^{-1}$,
$i=1,2$ are the eigenvalues of monodromy matrix.  Under a
``nondegenerate condition" they describe the asymptotic behaviour of
$\log|tr_i|$, $i=1,2$ and $tr_2$. Their study includes the ERE of Lagrangian
equilateral triangle and Euler collinear central configurations.

Motivated by the these results, we will use blow-up technique and index
theory to study the stability problem of ERE. The index theory we
used will be based on the  Maslov-type index.  The Maslov index
is associated to a  given continuous path of Lagrangian subspaces  and the 
Maslov-type index is assigned to a path of
symplectic matrices. We briefly review the Maslov index theory in \S2.2
and give its relation with the Maslov-type index. For reader's
convenience, we now roughly  describe the Maslov-type  index theory.
Let $\Sp(2n)$ be the set of symplectic matrix in $\mathbb{R}^{2n}$ 
equipped with the 
standard symplectic structure, and set $I_{2n}$
be the identity matrix on $\mathbb{R}^{2n}  $ . Let $\ga\in
C([0,T],\Sp(2n))$ with $\ga(0)=I_{2n}$, for $\omega\in\U$, roughly
speaking,  the Maslov-type index $i_\omega(\ga)$ is the intersection
number (by a small perturbation) of $\ga$ and
$D_\omega:=\{M\in\Sp(2n),\det(M-\omega I_{2n})=0\}$. Please refer to
\cite{Lon4} for the details.

By the blow-up technique, the limit of $ERE$, as $e\to 1$, can be described 
by two heteroclinic orbits $l_0,l_+$  connected
$P_\pm$. (Cfr. to  Figure \ref{fig:side:a1}).  Throughout of the paper, we  
denote by  $\ga_e$  the fundamental solution of the 
essential part of ERE, that is, $\dot{\ga}_e(t)=J\mathcal{B}(t)\ga_e(t), 
t\in[0,2\pi]$, $\ga_e(0)=id$, where $\mathcal{B}(t)$ is defined in   
Equation \eqref{msf}. Given a symmetric matrix $R$ we shall denote by $\lambda_1(R)$, 
the smallest eigenvalue of $R$.
When the  limit  equilibrium $P_\pm$ is  non-hyperbolic, we have the 
following result. 
\begin{thm}\label{thm1.1} Let $a_0\in \mathcal{E}$ be a planar central 
configuration  which satisfies \bea
\lambda_1(D^2U|_\mathcal{E}(a_0))<-\frac{1}{8}U(a_0),\lb{t1}\eea 
where 
$D^2U|_\mathcal{E}(a_0)$ is the Hessian of  $U$ restricted to $\mathcal{E}$ at  
$a_0$. Then 
$i_1(\ga_e)\to\infty$ as $e\to1$. \\ For 
$\frac{1}{U(a_0)}\lambda_1(D^2U|_\mathcal{E}(a_0))=-\frac{1}{8}-r_1$, let 
$\varepsilon=\frac{1}{2}\min \{\frac{r_1}{2r_1+5}, 1/8  \}$,  
$\hat{e}=\frac{1-e^2}{2}$. If  $\hat{e}<\varepsilon^3$, then we have    \bea 
i_1(\ga_e)\geq 
2\frac{\sqrt{r_1}}{\pi}\ln\(\frac{\varepsilon^2}{\sqrt{\hat{e}}}\)-6. \lb{t1.01} 
 \eea
\end{thm}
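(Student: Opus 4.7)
The overall plan is to exploit the blow-up technique recalled in the introduction (from \cite{MSS1}) to re-parametrise the ERE near $e=1$, reduce the computation of $i_1(\gamma_e)$ to counting rotations of the rescaled flow near two non-hyperbolic limit equilibria $P_\pm$, and then apply the standard elliptic asymptotic to obtain the logarithmic lower bound.

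First I would carry out the blow-up: via a change of time $\tau=\tau(t,e)$ that removes the Keplerian singularity at the (unique) collision time $t=\pi$ when $e=1$, the essential system $\dot{\gamma}_e=J\mathcal{B}(t)\gamma_e$ on $[0,2\pi]$ becomes a regular Hamiltonian system on an interval $[-T_e,T_e]$ with $T_e$ of order $\ln(1/\sqrt{\hat{e}\,})$. As $\hat{e}\to 0$ the rescaled system converges, uniformly on compact subsets of $\mathbb{R}\setminus\{0\}$, to the limiting autonomous system associated to the heteroclinic/half-clinic picture with rest points $P_\pm$. A direct computation of the linearisation at $P_\pm$ shows that the relevant block of $J\mathcal{B}_\infty$, restricted to the eigenspace of $\lambda_1(D^2U|_{\mathcal{E}}(a_0))$, is exactly the block whose eigenvalues are determined by the quantity $-\lambda_1(D^2U|_{\mathcal{E}}(a_0))/U(a_0)-1/8$; under the hypothesis \eqref{t1} this quantity equals $r_1>0$, producing a purely imaginary pair $\pm 2i\sqrt{r_1}$. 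This is precisely the non-hyperbolicity producing rotational dynamics near $P_\pm$.

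Second, I would split $\gamma_e$ into three pieces in the blown-up time: a short boundary piece on $|\tau|\le\varepsilon$, and two long pieces $\tau\in[\varepsilon,T_e]$ and $\tau\in[-T_e,-\varepsilon]$ where the rescaled path is uniformly close to the elliptic linearisation at $P_+$, $P_-$ respectively. Using concatenation and homotopy invariance of the Maslov-type index together with its realisation as a crossing form (reviewed in \S2.2), the contribution of the short transitional piece is bounded by a universal constant, while each long piece contributes a rotation number bounded below by $\sqrt{r_1}(T_e-\varepsilon)/\pi$ minus a bounded defect. Adding the two symmetric ends gives the factor $2$ appearing in \eqref{t1.01}; substituting the explicit relation $T_e\sim\ln(\varepsilon^2/\sqrt{\hat{e}\,})$ produced by the blow-up change of variables yields the desired inequality. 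The qualitative statement $i_1(\gamma_e)\to\infty$ follows immediately by letting $\hat{e}\to 0$ with $\varepsilon$ (and hence $r_1$) fixed.

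The main obstacle is the uniform closeness of $\gamma_e$ to the linearisation at $P_\pm$ over the long time $T_e\to\infty$: a naive Gronwall estimate would blow up. To overcome this I would localise to a fixed $\varepsilon$-neighbourhood of each equilibrium in which the nonlinear remainder of the blown-up vector field is quadratically small in $\varepsilon$, then pass to normal-form (Floquet) coordinates in which the linear action is a genuine rotation at frequency $2\sqrt{r_1}$; this keeps the accumulated angular error bounded by $O(\varepsilon T_e)$, which is harmless for the choice $\varepsilon=\tfrac12\min\{r_1/(2r_1+5),1/8\}$ dictated by the theorem. Tracking all constants produced by the three pieces (transitional jump, boundary contribution, and the $\mathcal{O}(1)$ defect in each rotation count) and taking the worst case supplies the explicit additive $-6$ in the bound \eqref{t1.01}.
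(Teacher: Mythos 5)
Your overall strategy coincides with the paper's: blow up via the McGehee--type change of variables, observe that for $\lambda_1(R)<-\tfrac18$ the limit equilibria $P_\pm$ acquire a purely imaginary pair of eigenvalues, and harvest the index from the rotation accumulated during the logarithmically long passage near each of $P_-$ and $P_+$; the factor $2$ in \eqref{t1.01} and the final reduction to a Dirichlet-type crossing count are the same as in the paper. (A minor slip: on the bottom eigenspace the eigenvalues of the linearisation are $\pm i\sqrt{r_1}$, not $\pm 2i\sqrt{r_1}$; your rotation count $\sqrt{r_1}\,T_e/\pi$ is consistent with the former, not the latter.)

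The genuine gap is in your control of the perturbation over the long passage. You propose normal-form (Floquet) coordinates near $P_\pm$ and accept an accumulated angular error of order $\varepsilon T_e$, asserting this is harmless. It is not, for two reasons. First, an error growing linearly in $T_e$ cannot be absorbed into the additive constant $-6$ of \eqref{t1.01}: it degrades the multiplicative constant $2\sqrt{r_1}/\pi$, and whether the stated coefficient survives depends on a quantitative comparison between the error rate and the slack in the passage-time estimate $\tau_2-\tau_1\ge\sqrt2\,\ln\bigl(\varepsilon/q(\tau_1)\bigr)$ of \S2.1 --- precisely the bookkeeping you have not carried out. Second, the transformation that turns the elliptic $2\times 2$ block into a genuine rotation is ill-conditioned as $r_1\to 0$ (its condition number grows like a negative power of $r_1$), so an $O(\varepsilon)$ perturbation in the original coordinates is not $O(\varepsilon)$ in your rotation coordinates; with $\varepsilon\sim r_1/10$ the transformed error rate is a fixed fraction of the frequency $\sqrt{r_1}$ rather than negligible relative to it. The paper avoids both difficulties in Lemma~\ref{lem2.6} by a different mechanism: the monotonicity of the Maslov index under the pointwise ordering $\hat B\ge \hat B_- -\varepsilon_1 I_{2k}$ replaces the time-dependent path by an autonomous comparison system whose frequency is explicitly $\sqrt{f(\varepsilon_1,r_1)}\ge\sqrt{r_1/2}$; the loss of the factor $\sqrt2$ in the frequency is exactly compensated by the factor $\sqrt2$ in the lower bound for $\tau_2-\tau_1$, and the only remaining defects are the bounded H\"ormander-index correction ($\le 2$, from a single constant conjugation rather than a time-dependent one) and the $\pm1$ endpoint terms, which give the $-3$ per passage and hence the $-6$. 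Without this (or an equivalent) device your argument does establish $i_1(\ga_e)\to\infty$, but not the explicit inequality \eqref{t1.01} with the stated constants.
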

From \cite{Lon4}, it is well-known that, for any $\omega\in\U$, $|i_\omega-i_1|\leq n$,
Then Theorem 1.1. shows also  that $i_\omega(\ga_e)\to\infty$ as $e\to1$, which
implies there exists a sequence $e_j(\omega)$ converging to $1$, such
that  the system is $\omega$-degenerate.

It is well known that  any $T$-periodic solution  is a critical point of the  
action functional $$ \cal{F}(q)=\int_0^T\[\sum_{i=1}^n 
\frac{m_i\|\dot{q}_i(t)\|^2}{2}+U(q)  \]dt  $$
defined on loop space $W^{1,2}(\mathbb{R}/T\mathbb{Z}, \Lambda)$. Let now 
$x_e$ be the ERE corresponding to $a_0$ with
eccentricity $e$, and let $\phi(x_e)$ be the Morse index of $x_e$ (meaning that 
it is the total number of the negative eigenvalues of
$ \cal{F''}(x_e) $).  Since the Morse index is equal to Maslov-type index (cfr. 
to Lemma \ref{lem5.5}. We have  $\phi(x_e)$  is not less than the 
Maslov-type index $i_1(\ga_e)$ coming from the essential part.  Theorem \ref{thm1.1} implies  
that,  if $a_0$ satisfied
(\ref{t1}), then $\phi(x_e)\to\infty$ as $e\to1$ \cite{HS1}.

The above theorem is related to the result of the interesting paper of
V. Barutello and S. Secchi \cite{BS}. They defined a collision Morse
index for one-collision solution in $n$-body problem with $\alpha$
homogeneous potentials, and proved that the collision index is infinite under the
condition  (\ref{t1}) for the Newton potential. Their results
show that a one-collision solution asymptotic to $a_0$ which satisfied 
(\ref{t1}) cannot be locally minimal for the action function.  A Morse-type 
index theorem both for colliding and parabolic motions, 
will be given in \cite{BHPT}.

The most interesting case, however,  is precisely when $P_\pm$ is hyperbolic, in this case we can
define the Maslov index for heteroclinic orbits and half-clinic
orbits.  (Cfr. Equations \eqref{cl0} and \eqref{clr}).    For {\it half-clinic 
orbits},  we mean a solution  of Hamiltonian system $x(t)$ defined  
on ${\mathbb  R}^+$ or ${\mathbb  R}^-$,  where ${\mathbb  R}^+$ and ${\mathbb R}^-$ stands for the 
non-negative and non-positive half-line, respectively and such that 
the initial condition $x(0)$ belongs to a Lagrangian subspace whilst $x(t)$ converge 
to an equilibrum point  when $t\to\pm\infty$.
Also in this last case , we assign a Maslov  index to both $l_0$ and $l_+$ and 
we shall refer to as {\it collision
index}. After defined the collision index,  we shall prove
Theorem \ref{th.pr} and we shall refer to as approximation theorem.
Let us now show that, under a suitable non degenerate
conditions  for $e\to1$, the Maslov index for $\ga_e$ is convergent to
the sum of collision index on $l_0,l_+$. This is a main part. (Cfr. 
\S3.1 for the details).  In the study of stability
problem, the Dirichlet, Neumann, periodic, anti-periodic boundary
condition play an important role. Our key idea is to use the Maslov
index corresponding to these $4$ kinds of boundary conditions for  determining the
stability. 

Throughout of the paper, we always let $V^j_d=\mathbb{R}^j\oplus 0,
V^j_n=0\oplus\mathbb{R}^j$ be the Lagrangian subspace in $(\mathbb{R}^{2j},\omega_0)$ 
which  corresponding to the Dirichlet and Neumann boundary 
conditions respectively, and we always omit the subscript 
if no confusion is possible. For ERE, by using the approximation theorem, 
we get the following result.
\begin{thm}\lb{thm1.2}  Let  $a_0\in \mathcal{E}$ be such that 
$\lambda_1(D^2U|_\mathcal{E}(a_0))>-\frac{1}{8}U(a_0)$ and we  assume  that $a_0$ is  
nondegenerate and collision nondegenerate. Let $\phi(a_0)$
be the Morse index of $a_0$ which is the total number of negative eigenvalues of  $D^2U|_\mathcal{E}(a_0)$.
 For $1-e$ small enough, we have,  $ \ga_e(2\pi)V_d\pitchfork V_d$,
\bea \mu(V_d,\ga_e(t) V_d, t\in[0,2\pi]
)=k+i(V_d;l_+),\lb{prd}\eea and $ \ga_e(2\pi)V_n\pitchfork
V_n$, \bea \mu(V_n,\ga_e(t) V_n, t\in[0,2\pi]
)=2\phi(a_0)+i(V_d;l_+),\lb{prn}\eea where $\pitchfork$ means
transversal, $k=2n-4$,  $i(V_d;l_+)$ is the collision index on $l_+$ defined by (\ref{clr}) and $\mu$ is the
Maslov index.
\end{thm}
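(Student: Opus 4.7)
The plan is to apply the approximation theorem (Theorem \ref{th.pr}), which decomposes the Maslov index of $\gamma_e$ on $[0,2\pi]$ into a sum of collision indices along the limiting heteroclinic orbit $l_0$ and half-clinic orbit $l_+$ produced by the blow-up as $e\to 1$. The hypothesis $\lambda_1(D^2U|_\mathcal{E}(a_0)) > -\tfrac{1}{8}U(a_0)$ guarantees that the linearization at the blow-up equilibria $P_\pm$ has no eigenvalues on the imaginary axis, so $P_\pm$ are hyperbolic and the collision indices $i(V;l_0)$, $i(V;l_+)$ defined by \eqref{cl0}--\eqref{clr} are available.

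The first step is to verify the transversality assertions $\gamma_e(2\pi)V_d \pitchfork V_d$ and $\gamma_e(2\pi) V_n \pitchfork V_n$. The nondegeneracy of $a_0$ together with the collision nondegeneracy implies that in the blow-up limit the stable and unstable Lagrangian subspaces at $P_\pm$ meet $V_d$ and $V_n$ transversally; since transversality is an open condition, it persists for $\gamma_e$ once $1-e$ is sufficiently small.

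For the Dirichlet case the approximation theorem yields
\begin{equation*}
  \mu(V_d, \gamma_e(t) V_d,\, t \in [0,2\pi]) = i(V_d; l_0) + i(V_d; l_+),
\end{equation*}
so it remains to identify $i(V_d;l_0) = k = 2n-4$. This follows from a direct computation of the collision index along $l_0$: the flow lives on the essential symplectic subspace $E_3$ of dimension $4n-8$, and comparing $V_d$ with the stable Lagrangian $E^s(P_+)$ along $l_0$ yields precisely the Lagrangian dimension $2n-4$, provided the intersection pattern is counted with the correct sign convention furnished by the definition of the collision index.

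For the Neumann case the same decomposition gives $\mu(V_n,\gamma_e(t)V_n) = i(V_n;l_0) + i(V_d;l_+)$, the $l_+$ term retaining $V_d$ because the collision geometry at $l_+$ forces Dirichlet boundary conditions after the blow-up. The identification $i(V_n; l_0) = 2\phi(a_0)$ is then obtained by relating the Neumann--Dirichlet jump to the Morse theory of $a_0$: each negative eigenvalue of $D^2U|_\mathcal{E}(a_0)$ contributes a jump of $2$ in the Maslov index between the Dirichlet and Neumann collision indices along $l_0$, via the standard Morse-index theorem applied to the Sturm--Liouville operator arising from the linearization at $P_\pm$. The main obstacle is to carry out the approximation uniformly in $e$ and to accurately track how $V_n$ intersects the stable and unstable manifolds at $P_\pm$ in the blow-up limit, which is precisely where the factor $2\phi(a_0)$ appears.
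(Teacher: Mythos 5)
Your overall strategy is the paper's: blow up, apply the approximation theorem (Theorem \ref{th.pr}(iii)), and evaluate the resulting collision indices on the $l_0$ pieces and on $l_+$. The Dirichlet case is essentially right (the paper gets $i_-(V_d;l_0^+)=0$ and $i_+(V_d,V_d;l_0^-)=k$ from the two--dimensional reduction and the H\"ormander index computations (\ref{s1})--(\ref{s4}), which is what your ``$i(V_d;l_0)=k$'' amounts to), although note that the $l_0$ contribution is genuinely two separate half-clinic indices, $i_+(\cdot,\cdot;l_0^-)$ at the start and $i_-(\cdot;l_0^+)$ at the end of the period, not a single heteroclinic index along $l_0$.

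The Neumann case contains a genuine error in the bookkeeping. The approximation theorem applied with $V_0=V_1=V_n$ produces
$i_-(V_n;l_0^+)+i_+(V_n,V_n;l_0^-)+i(V_n;l_+)$, i.e.\ the $l_+$ term is the Neumann collision index $i(V_n;l_+)$, not $i(V_d;l_+)$; your assertion that ``the collision geometry at $l_+$ forces Dirichlet boundary conditions after the blow-up'' has no basis and is false as stated. The missing ingredient is the identity (\ref{ndr}), $i(V_n;l_+)=i(V_d;l_+)+\phi(R)$, which the paper proves via the H\"ormander index $s(V_n,V_d;V_u^-,V_u^+)=\phi(R)$ computed blockwise from (\ref{s3}). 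Correspondingly, your claim that the $l_0$ part contributes $2\phi(a_0)$ is wrong: by (\ref{i01}) and (\ref{i03}) one has $i_-(V_n;l_0^+)=0$ and $i_+(V_n,V_n;l_0^-)=\phi(R)$, so $l_0$ contributes only one copy of $\phi(a_0)$; the second copy comes from the $V_n$-versus-$V_d$ discrepancy on $l_+$. Your final answer agrees with (\ref{prn}) only because the two errors cancel. Finally, the transversality statements $\ga_e(2\pi)V_d\pitchfork V_d$ and $\ga_e(2\pi)V_n\pitchfork V_n$ are not something to be verified separately by an openness argument from the limit data alone: they are part of the conclusion of Theorem \ref{th.pr}, whose hypotheses ($V_u^\pm\pitchfork V_1$, nondegeneracy on $l_0^\mp$, collision nondegeneracy on $l_+$) are what must be checked, and these follow from the nondegeneracy of $R$ (Lemma \ref{lem4.2}) and the collision nondegeneracy assumption.
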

The definition of collision  nondegenerate index is given in
Definition \ref{defcn}. 
The degenerate problem  on $l_0$ will be discussed in \S3.2. We observe that, in contrast 
with respect to the nondegeneracy condition along $l_0$, we didn't establish a useful 
criterion for detecting the nondegeneracy along $l_+$.

If the central configurations have brake symmetry (cfr. Definition
\ref{defs}),  the  collision index of heteroclinic
could be decomposed into the sum of index on half-clinic orbits and this 
will simplify the computation.  To our knowledge, the
Lagrangian and Euler central configurations both have brake symmetry.
Another example is the $1+n$ central configurations, that is regular
polygon configurations with a central mass. It will be interesting to 
provide central configurations without this symmetry property.

As an application, we study the stability of ERE for minimizer 
central configurations. For a central configuration $a_0$,  it is obvious that $D^2U|_\mathcal{E}(a_0)$
has a trivial eigenvalue $0$ which comes from the rotation invariant.
  The central configuration $a_0$ is called
nondegenerate minimizer if all the nontrivial eigenvalues are bigger than $0$,
while $a_0$ is
called strong minimizer if all the nontrivial eigenvalues are bigger than $U(a_0)$.
\begin{thm}\label{thm1.3}
We assume that  $a_0$ is a nondegenerate minimizer that satisfies the collision
nondegenerate condition.  If  $1-e$ is sufficiently small, then the  ERE  is
 hyperbolic.
\end{thm}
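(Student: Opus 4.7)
My plan is to apply Theorem \ref{thm1.2} and reduce the problem to a Maslov-index hyperbolicity criterion. Under the hypotheses of Theorem \ref{thm1.3}, $a_0$ is a nondegenerate minimizer, so every nontrivial eigenvalue of $D^2U|_\mathcal{E}(a_0)$ is strictly positive. In particular $\lambda_1(D^2U|_\mathcal{E}(a_0))>0>-\frac{1}{8}U(a_0)$, placing us in the regime of Theorem \ref{thm1.2}, and the Morse index satisfies $\phi(a_0)=0$. Together with the assumed collision nondegeneracy, Theorem \ref{thm1.2} applies for $1-e$ sufficiently small.

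Setting $\phi(a_0)=0$ in \eqref{prd} and \eqref{prn} and subtracting, we obtain
\begin{equation*}
\mu(V_d,\ga_e(t)V_d,t\in[0,2\pi])-\mu(V_n,\ga_e(t)V_n,t\in[0,2\pi])=k=2n-4=\dim E_3,
\end{equation*}
together with the endpoint transversalities $\ga_e(2\pi)V_d\pitchfork V_d$ and $\ga_e(2\pi)V_n\pitchfork V_n$. The crucial observation is that this index difference has saturated its upper bound, namely the full dimension $k$ of the essential symplectic subspace $E_3$.

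It remains to invoke a Maslov-index hyperbolicity criterion: for any continuous path $\ga:[0,T]\to\Sp(2k)$ with $\ga(0)=I_{2k}$ satisfying $\ga(T)V_d\pitchfork V_d$, $\ga(T)V_n\pitchfork V_n$, and $\mu(V_d,\ga V_d)-\mu(V_n,\ga V_n)=k$, one must have $\ga(T)$ hyperbolic. The strategy for the criterion is to homotope $\ga$, rel endpoints, to a concatenation of standard symplectic paths realizing the Williamson normal form of $\ga(T)$, and then to compute the gap block-by-block: real-hyperbolic $2$-blocks each can contribute $+1$ to $\mu_d-\mu_n$, whereas elliptic blocks and Jordan blocks with eigenvalues on $\mathbb{U}$ contribute at most $0$. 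Saturation at $k$ therefore forces every block of $\ga(T)$ to be real hyperbolic, i.e. $\sigma(\ga(T))\cap\mathbb{U}=\emptyset$. Applied to $\ga_e(2\pi)$ this gives that the ERE is hyperbolic.

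The principal technical obstacle is the sharp block-by-block bound on Maslov-index contributions that underpins the criterion in the last step; in particular, ruling out any positive contribution coming from parabolic or non-semisimple Jordan blocks with eigenvalue $\pm 1$ on $\mathbb{U}$ requires a careful normal-form analysis. Once the block bound is established, the combination with Theorem \ref{thm1.2} is immediate.
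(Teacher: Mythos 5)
Your first step — invoking Theorem \ref{thm1.2} with $\phi(a_0)=0$ to get the transversalities and the saturated index gap $\mu(V_d,\ga_e V_d)-\mu(V_n,\ga_e V_n)=k$ — is exactly how the paper begins. The gap is in the second step: the ``Maslov-index hyperbolicity criterion'' is precisely the content you would need to prove, you explicitly leave it unproven, and the normal-form strategy you sketch for it does not work as stated. The symplectic conjugation $\ga(T)=P^{-1}NP$ bringing the monodromy to block (Williamson) form does not preserve the Lagrangians $V_d$ and $V_n$, so the quantity $\mu(V_d,\ga V_d)-\mu(V_n,\ga V_n)$ cannot be evaluated block-by-block in the normal form without controlling additional H\"ormander-index corrections of the type \eqref{hc}, which can be as large as $2k$ in absolute value; ``saturation forces every block to be hyperbolic'' therefore does not follow from a per-block bound on the normal-form side. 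There is also a genuine subtlety you flag but underestimate: a parabolic monodromy matrix satisfying both transversality conditions is a limit of both elliptic and hyperbolic transversal matrices, and the index gap is locally constant on the transversal set, so any argument must be sharp enough to show that such limit points carry gap strictly less than $k$ — this is not a routine computation in the non-semisimple case.

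The paper avoids all of this by exploiting that the essential part is a Sturm--Liouville (Lagrangian) system. Via Lemma \ref{lem5.5}, equation \eqref{inr2}, the saturated gap translates into the equality of Morse indices $\phi_d=\phi_n$ for the operator $\mathcal{L}=-\frac{d^2}{dt^2}I_k-2\mathbb{J}_{k/2}\frac{d}{dt}+\frac{R}{1+e\cos t}$ under Dirichlet and Neumann boundary conditions, and the transversality $\ga_e(2\pi)V_n\pitchfork V_n$ gives nondegeneracy of $\mathcal{L}_n$. Then Proposition \ref{prop4.1} concludes: the eigenvalue interlacing $\lambda_j(\mathcal{L}_n)\leq\lambda_j(\mathcal{L}_\omega)\leq\lambda_j(\mathcal{L}_d)$ of \eqref{eig} pins $\lambda_{k_0}(\mathcal{L}_\omega)<0<\lambda_{k_0+1}(\mathcal{L}_\omega)$ for every $\omega\in\mathbb{U}$, hence $\mathcal{L}_\omega$ is invertible for all $\omega$, which is equivalent to $\sigma(\ga_e(2\pi))\cap\mathbb{U}=\emptyset$. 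To repair your argument with the least effort, replace your unproven criterion by this operator-theoretic step (or restrict your criterion to paths arising from Lagrangian systems and prove it by exactly this interlacing argument); as a general statement about arbitrary symplectic paths it would still require the Morse-index identification anyway. A small additional slip: $k=2n-4$ is half the dimension of $E_3$, not $\dim E_3$.
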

In the case $e=0$, Moeckel conjectured \cite{Ab} that a relative  equilibrium is 
linearly stable  only if it
associated to a minimizing  central configuration. 
Our results show that   in the case that $1-e$  small enough, 
it is generally  hyperbolic. By the way, we
conjecture that Theorem \ref{thm1.3} is true also without  the
collision nondegenerate condition.

In the case $a_0$ strong minimizer the following result holds.
\begin{thm}\label{thm1.4}
 The ERE of  a  strong minimizer $a_0$  is hyperbolic for any
 $e\in[0,1)$.
\end{thm}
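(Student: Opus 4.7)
My approach is to establish the positive definiteness of the second variation of the reduced action functional associated with the essential part of the ERE, uniformly in $\omega \in \mathbb{U}$ and $e\in[0,1)$. This would preclude any eigenvalue of $\gamma_e(2\pi)|_{E_3}$ from lying on $\mathbb{U}$ and hence give hyperbolicity.

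First I would recast hyperbolicity: $\gamma_e(2\pi)|_{E_3}$ is hyperbolic if and only if, for every $\omega\in\mathbb{U}$, the $\omega$-boundary value problem $\dot z=J\mathcal{B}(\theta)z$, $z(2\pi)=\omega z(0)$, has only the trivial solution. Equivalently, the self-adjoint operator $\mathcal{A}_{e,\omega}:=-J\tfrac{d}{d\theta}-\mathcal{B}(\theta)$ on the Sobolev space of $\omega$-periodic paths must have trivial kernel, which certainly holds if its associated quadratic form is positive definite.

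Next I would exploit the strong minimizer bound. After the Meyer--Schmidt reduction and passage to the true anomaly $\theta$, the quadratic form associated to $\mathcal{A}_{e,\omega}$ has a Lagrangian representative of the schematic shape
\[
Q_{e,\omega}(v)=\int_0^{2\pi}\!\Big[|\dot v|^2+2\,v\cdot J_0\dot v-|v|^2+\tfrac{1}{1+e\cos\theta}\,v^T\tilde D\,v\Big]\,d\theta,
\]
where $\tilde D=U(a_0)^{-1} D^2U|_{\mathcal{E}}(a_0)$ on the essential configuration subspace of dimension $2n-4$. The strong minimizer hypothesis states that all nontrivial eigenvalues of $\tilde D$ exceed $1$. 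Passing to the rotating frame via $v(\theta)=e^{-\theta J_0}y(\theta)$ (which preserves the $\omega$-boundary condition because $e^{2\pi J_0}=I$) and using the identity $|\dot v+J_0 v|^2=|\dot y|^2$ absorbs the Coriolis cross term, so that
\[
Q_{e,\omega}(v)=\int_0^{2\pi}\!\Big[|\dot y|^2-2|y|^2+\tfrac{y^T\tilde D(\theta) y}{1+e\cos\theta}\Big]\,d\theta,
\]
with $\tilde D(\theta):=e^{\theta J_0}\tilde D\, e^{-\theta J_0}$ isospectral to $\tilde D$. Combining the uniform bound $y^T\tilde D(\theta)y\ge(1+\delta)|y|^2$ (with $\delta>0$ the spectral gap from $\tilde D>I$) with a Wirtinger-type estimate tailored to the $\omega$-boundary condition and the identity $\int_0^{2\pi}(1+e\cos\theta)^{-1}\,d\theta=2\pi(1-e^2)^{-1/2}$ should close the positivity argument.

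The main obstacle is the uniformity as $e\uparrow 1$, where the potential coefficient $(1+e\cos\theta)^{-1}$ becomes small on part of the interval; pointwise positivity fails and one must exploit integral coercivity. This requires a careful partition of $[0,2\pi]$ into a collision zone around $\theta=\pi$ (where $(1+e\cos\theta)^{-1}$ is large and the potential term dominates to produce strong positive contribution, as witnessed by the divergent mean $2\pi(1+\delta)/\sqrt{1-e^2}-4\pi\to+\infty$) and its complement (where a Poincar\'e/Wirtinger bound involving $|\dot y|^2$ must control the negative contribution from $-2|y|^2$). A secondary difficulty, the non-commutativity of $\tilde D$ and $J_0$, is rendered harmless by the pointwise spectral bound on $\tilde D(\theta)$, which depends only on the (preserved) spectrum of $\tilde D$ and does not require simultaneous diagonalization.
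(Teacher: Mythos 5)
Your overall strategy is the same as the paper's: recast hyperbolicity as nondegeneracy (indeed positivity) of the $\omega$-boundary operator for every $\omega\in\mathbb{U}$, replace $R$ by $\lambda_1(R)I_k$ using the strong minimizer bound, and conjugate by the rotation $\mathcal{R}(t)$ to absorb the Coriolis term and reduce to a scalar operator. However, your bookkeeping of the zero-order term is off in a way that is fatal to the positivity claim. The Meyer--Schmidt essential system \eqref{st} is already written in a rotating frame: its Lagrangian is $\int_0^{2\pi}\bigl(|\dot v|^2+2\mathbb{J}_{k/2}v\cdot\dot v+\tfrac{v^TRv}{1+e\cos t}\bigr)dt$ with \emph{no} separate $-|v|^2$ term (the centrifugal $-|v|^2$ is cancelled by the $Q^TP^{-1}Q$ contribution in \eqref{n2.81}). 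The single $-|y|^2$ appears only after the further rotation, giving $\mathcal{R}\hat{\mathcal{L}}\mathcal{R}^T=-\tfrac{d^2}{dt^2}-I_k+\tfrac{\lambda_1(R)I_k}{1+e\cos t}$. By writing the form with both the Coriolis cross term and a $-|v|^2$, you double-count and arrive at $-2|y|^2$. With that coefficient the form is already indefinite at $e=0$: for $\omega=1$ and $y$ constant it equals $2\pi(\lambda_1(R)-2)|y|^2$, which is negative whenever $1<\lambda_1(R)<2$, so your argument would only cover strong minimizers with $\lambda_1(R)>2$ and could not prove the theorem as stated.

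The second, and more substantive, gap is the step you yourself flag as ``the main obstacle'': the uniform positivity of $\mathcal{A}(e,\delta)=-\tfrac{d^2}{dt^2}-1+\tfrac{\delta}{1+e\cos t}$ on $\bar D_1(\omega,2\pi)$ for all $\delta>1$, $e\in[0,1)$ and $\omega\in\mathbb{U}$. This is precisely Lemma \ref{lem5.6}, which the paper imports from \cite{HO} (Proposition 2); it is the entire analytic content of the theorem. Your proposed collision-zone/Wirtinger partition is only a programme, not a proof, and it is a delicate one: the operator is exactly degenerate at $\delta=1$ (the Keplerian degeneracy), so no constant may be lost anywhere in the estimate, and for $\omega$ near $1$ and $y$ nearly constant the Wirtinger term contributes essentially nothing, forcing the positive mass concentrated near $t=\pi$ to control the negative contribution on the whole circle. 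Unless you either supply a complete proof of this coercivity or cite it, the argument is not closed.
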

A typical example of nondegenerate minimizer central configurations
is the Lagrangian central configurations, which is strong minimizer if
$\beta>8$. For Lagrangian orbits, the  conclusion of Theorem
\ref{thm1.3} was proved in \cite{HLS} without the collision
nondegenerate condition and the result of Theorem \ref{thm1.4}  was
proved by Ou \cite{Ou}. Another easy example is the $1+3$-gon central
configurations, that is,  the regular triangular configurations with a
central mass. The three unit masses with unit distance away from the
mass $m_c$ at the origin. As a direct application of Theorem \ref{thm1.3} and \ref{thm1.4},
we get the next result.
\begin{cor}\lb{cor1.5} Let 
$a_0$ be a $1+3$-gon central configuration with central mass $m_c$,  
for  $m_c\in[0,\frac{81+64\sqrt{3}}{249})$ and we assume that 
$a_0$ is collision nondegenerate. If  $1-e$ is sufficiently small, 
then the  ERE is hyperbolic
Furthermore, if $m_c\in[0,\frac{\sqrt{3}}{24})$ then  the ERE is hyperbolic for any
$e\in[0,1)$.
\end{cor}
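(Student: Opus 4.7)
The plan is to reduce the corollary to a direct application of Theorems \ref{thm1.3} and \ref{thm1.4} by analyzing the Hessian $D^2U|_{\mathcal{E}}(a_0)$ at the $1+3$-gon configuration. Concretely, I would verify (i) that $a_0$ is a nondegenerate minimizer of $U|_{\mathcal{E}}$ whenever $m_c\in[0,\frac{81+64\sqrt{3}}{249})$, and (ii) that $a_0$ is a strong minimizer whenever $m_c\in[0,\frac{\sqrt{3}}{24})$. Given the collision nondegeneracy assumption, part (i) combined with Theorem \ref{thm1.3} yields hyperbolicity of the ERE for $1-e$ small, and part (ii) combined with Theorem \ref{thm1.4} yields hyperbolicity for every $e\in[0,1)$.

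The first step is to record the basic geometric quantities. With the three unit masses at the vertices of the equilateral triangle inscribed in the unit circle and $m_c$ at the origin, the mutual distances are $1$ from the center to each vertex and $\sqrt{3}$ between vertices, so $U(a_0)=3m_c+\sqrt{3}$ and $\mathcal{I}(a_0)=3$. The configuration $a_0$ is automatically a central configuration by symmetry, with multiplier $\lambda=U(a_0)/\mathcal{I}(a_0)$.

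The next step is to diagonalize $D^2U|_{\mathcal{E}}(a_0)$. Here the main tool is the dihedral symmetry group $D_3$ acting on $a_0$: the isotypic decomposition of $\Lambda$ under $D_3$ splits the Hessian into a direct sum of blocks indexed by the irreducible representations of $D_3$. After removing the trivial rotation direction on $\mathcal{E}$, one obtains four nontrivial eigenvalues that are explicit elementary functions of $m_c$. The condition that the smallest of these eigenvalues is strictly positive turns into an algebraic inequality in $m_c$ whose critical threshold, obtained by setting this eigenvalue equal to $0$ and solving the resulting linear equation, is exactly $m_c=\frac{81+64\sqrt{3}}{249}$. Similarly, the condition that this smallest eigenvalue strictly exceeds $U(a_0)=3m_c+\sqrt{3}$ becomes, after clearing denominators, a linear inequality in $m_c$ whose threshold is $m_c=\frac{\sqrt{3}}{24}$. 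For $m_c$ below these thresholds respectively, the nondegenerate-minimizer and strong-minimizer conditions hold.

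The main obstacle I expect is the bookkeeping in the symmetry reduction: identifying which Hessian blocks are kinematically trivial (accounting for the removal of translation, dilation and rotation used in defining $\mathcal{E}$), so that the surviving block eigenvalues are correctly interpreted as the nontrivial eigenvalues in the sense of Theorems \ref{thm1.3} and \ref{thm1.4}. Once this is done, the two numerical thresholds emerge as the first values of $m_c$ at which the relevant block eigenvalue crosses $0$, respectively $U(a_0)$, and the corollary follows by invoking Theorems \ref{thm1.3} and \ref{thm1.4} on the corresponding ranges of $m_c$.
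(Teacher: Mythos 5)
Your proposal is correct and follows essentially the same route as the paper: the paper likewise reduces Corollary \ref{cor1.5} to Theorems \ref{thm1.3} and \ref{thm1.4} by diagonalizing the normalized Hessian $R=I_4+\mathcal{D}$ (it quotes the explicit $4\times4$ matrix $\mathcal{D}$ from Meyer--Schmidt and splits it into two $2\times2$ blocks $\mathcal{D}_\mp$ via the $\diamond$-product, rather than redoing the $D_3$-isotypic decomposition), finding $\lambda_1(R)=1+\lambda_-(m_c)$ with $\lambda_-(m_c)=0$ exactly at $m_c=\frac{\sqrt{3}}{24}$ and $\lambda_-(m_c)=-1$ exactly at $m_c=\frac{81+64\sqrt{3}}{249}$, which are your two thresholds. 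The only quibble is cosmetic: after eliminating the square root in $\lambda_-(m_c)$, the equation for the nondegeneracy threshold is the one that happens to be linear in $m_c$, while the strong-minimizer threshold equation is quadratic (with $\frac{\sqrt{3}}{24}$ its positive root), so your ``linear equation/inequality'' labels are swapped, but this does not affect the argument.
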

Another conjecture of Moeckel \cite{Ab} states that a relative equilibrium 
is linearly stable only if it has a dominant mass.  For example  
the Lagrangian orbits and ERE of  $1+3$-gon relative to a strong minimizer 
have no dominant mass. Thus Theorem \ref{thm1.4} can be considered  
as a support of Moeckel's conjecture  in the case of $e>0$, so we guess  
Moeckel's conjecture is also true in the case of ERE.

The collision index plays an important role in the study of the stability
problem. We shall give some conjectures for the collision index which are
related with Y. Long's conjecture for the Maslov-type index of ERE. (Cfr.
Remark \ref{r5.1} for further details).

As a further  application, we consider the ERE of Euler collinear central
configurations \cite{Eu}, that  we simply refer to as elliptic Euler orbits.
The linear stability of this kind of orbits  depends upon two parameters, 
$e$ and $\delta$, where the last one $\delta\in[0,7]$ only 
depends on mass $m_1,m_2,m_3$.  (Cfr. Appendix A of \cite{MSS1} 
and \cite{LZhou} for further details). To
our knowledge, the near collision case was firstly studied by  R.
Mart\'{\i}nez, A. Sam\`{a}, C. Sim\'{o} \cite{MSS1}. Y. Long and
Q. Zhou used  Maslov-type index theory in order to describe the
$\pm1$-degenerate curves; they also  analysed the stability
problem. It is worth noting that by their methods is not possible to 
explain  the limit property of
$\pm1$-degenerate curves  numerically proved by   R. Mart\'{\i}nez,
A. Sam\`{a}, C. Sim\'{o} \cite{MSS1}.  Please refer to Figure \ref{fig:side:a4} 
and Figure \ref{fig:side:b4}.  Using
the collision index, we explain the limit property. We show that
$\delta>1/8$ is equivalent  to condition (\ref{t1}).  Theorem
\ref{thm1.1} implies the $\pm1$-degenerate curves  don't  intersect
$[1/8,7]\times 1$. In the case, $\delta\in(0,1/8)$, the collision
index is well defined, we analysis the near-collision phenomena by
the collision index. We  can compute in detail for collision index
on $l_0$,  but unfortunately, we can't determine  the collision index on $l_+$ by
analytical method.  Instead, we develop a numerical method to compute
the collision index. Based on numerical results A, the collision
index strictly proved the behaviour of the $\pm1$ in the near-collision case. Please refer to 
\S5.2 for the details.

This paper is organized as follows. We review the Meyer-Schmidt
reduction and Mart\'{\i}nez, Sam\`{a},Sim\'{o}  blow up technique at
\S2.1. We give a brief introduction of the   Maslov index theory and
we prove Theorem \ref{thm1.1} in \S2.2. The definition of collision
index is stated and the approximation theorem  is proved in \S3.1.   Some
basic properties of the collision index are given in \S3.2. In \S 4, we
study the case of brake symmetric central configurations.    The
computation of the collision index along $l_0,l_+$ is given in \S3.2, \S3.3.
We give some applications in \S5.  In \S5.1, we study the minimizing  central
configurations and we prove Theorem \ref{thm1.3} and Theorem
\ref{thm1.4}.  We use the collision index to analyse  the
Euler orbits in \S5.2. At last, for the reader's convenience, 
we give the details of the numerical method used to compute collision 
index in \S6.

\section{Blow up and limit index for the non-hyperbolic case }\label{sec2}

This section includes some basic preliminaries.  We first briefly review
the decomposition of ERE by following  authors in 
\cite{MS} and the blow-up technique of Mart\'{\i}nez, Sam\`{a} and
Sim\'{o} \cite{MSS2} in  section \S 2.1,  then  we review the
fundamental property of Maslov index in \S 2.2, and   give
the proof of Theorem \ref{thm1.1}.

\subsection{Reduction and   blow up method}

In 2005, Meyer and Schmidt strongly used the structure of the central configuration for 
the elliptic Lagrangian orbits and symplectically decomposed the fundamental 
solution of the elliptic Lagrangian orbit into two parts, 
one of which corresponding to the Keplerian solution and the other is the essential part
of the dynamics, needed for studying the  stability.
   For the reader's convenience, we briefly review the 
   central configuration coordinates, by following Meyer and Schmidt \cite{MS}.

 Suppose that  $\mathcal{Q}=(q_1,...,q_n)\in \mathbb{R}^{2n}$  
   with mass $m_1,...,m_n$ is a central configuration, and $\mathcal{P}=(p_1,...,p_n)\in\mathbb{R}^{2n}$.
Let $I_j$ be the identity matrix on $\mathbb{R}^j$, $J_{2j}=\left( \begin{array}{cccc}0_j& -I_j \\
I_j & 0_j \end{array}\right)$. 
We denote by $\mathbb{J}_n=diag(J_2,...,J_2)_{2n\times2n} $ and  
$M=diag (m_1,m_1,m_2,m_2,...,m_n, m_n)_{2n\times2n}$.
We assume that $t \mapsto x(t)$ is a periodic solution of ERE, then the corresponding fundamental solution is
\bea \dot{\ga}(t)=J_{4n}H''(x(t))\ga(t),\,\ \ga(0)=I_{4n}. \label{ga}  \eea
As in \cite[Corollary 2.1, pag.266]{MS}, Equation (\ref{ga}) can be decomposed into $3$ subsystems on
$E_1$, $E_2$ and $E_3=(E_1\cup E_2)^{\perp}$ respectively. The basis
of $E_1$ is $(0,u)$, $(Mu,0)$, $(0,v)$, $(Mv,0)$, where $u =(1, 0,
1, 0, ...)$, $v =(0,1, 0, 1, .)$,
 and $E_2$ is spanned by
$(0,\mathcal{Q})$, $(M\mathcal{Q},0)$, $(0,\mathbb{J}_n\mathcal{Q})$, $(\mathbb{J}_nM\mathcal{Q},0)$.
For $X = (g, z,w)\in \mathbb{R}^2\times\mathbb{R}^2\times\mathbb{R}^{2n-4}$ and $Y =
(G,Z,W)\in\mathbb{R}^2\times\mathbb{R}^2\times\mathbb{R}^{2n-4}$, we consider the linear
symplectic transformation of the form $\mathcal{Q} = PX, \mathcal{P} = P^{-T} Y$, where
$P$ is such that $\mathbb{J}P = P\mathbb{J}, P^TMP = I_{2n}$ (\cite{MS}, p263).
Now $B(t)=H''(x(t))$ in this new coordinate system has the form
$B(\mathcal{Q})=B_1\oplus B_2\oplus B_3$, where $B_i=B|_{E_i}$.
The essential
part $B_3(t)$  is a path of  $(4n-8)\times(4n-8)$ symmetric matrix.

In the rotating coordinate system and by using the true anomaly as the variable, 
Meyer and Schmidt \cite{MS} gave a very useful form of the essential part
\bea  \mathcal{B}(t)=\left( \begin{array}{cccc} I_{k} & -\mathbb{J}_{k/2} \\
\mathbb{J}_{k/2} & I_{k}-\frac{I_{k}+\mathcal {D}}{1+e\cos(t)}
\end{array}\right),\quad t\in[0,2\pi], \label{msf} \eea where  $k=2n-4$ and $e$ is
the eccentricity, $t$ is the true anomaly and \bea \mathcal
{D}=\frac{1}{\lambda}P^TD^2U(\mathcal{Q})P\big|_{w\in\mathbb{R}^{k}},\quad
with \quad \lambda=\frac{U(\mathcal{Q})}{\cal{I}(\mathcal{Q})}.  \eea We
denote by $R:=I_{k}+\mathcal {D}$, which  can be considered as the
regularized Hessian of the central configurations.
 In fact, for $a_0\in\mathcal{E}$ which is a central configurations,
 then $\cal{I}(a_0)=1$. With respect to the mass matrix $M$
inner product, the Hessian of the restriction of the potential to the inertia 
ellipsoid, is given by 
$$D^2U|_\mathcal{E}(a_0)=M^{-1}D^2U(a_0)+U(a_0).$$
Then we have \bea P^{-1}D^2U|_\mathcal{E}(a_0)
P=P^TD^2U(a_0)P+U(a_0), \eea and thus \bea
R=\frac{1}{U(a_0)}P^{-1}D^2U|_\mathcal{E}(a_0)P\big|_{w\in\mathbb{R}^{k}}  \,\ .\lb{R} \eea 
Thus the corresponding Sturm-Liouville system  is
  \bea -\ddot{y}-2\mathbb{J}_{k/2}\dot{y}+\frac{R}{1+e\cos(t)}y=0.  \label{st} \eea

In order to study the singular limit case $e\to1$, we use a change of coordinates as in 
\cite{MSS2}. In fact, in the case of Newtonian potential, this can be interpreted as a 
McGehee change of coordinates (cfr. \cite{Mc}, \cite{M0}).
Let $q=(1+e\cos(t))^{1/2}$, $Q=-2\dot{q}$ and change the time variable to $\tau$, where
$dt=qd\tau$.  Throughout the paper,  we always use $x'=
\frac{dx}{d\tau}$ and $\dot{x}=\frac{dx}{dt}$.  Then we have
\bea q'=-\frac{1}{2}qQ,\quad  Q'=\frac{1}{2}Q^2+q^2-1.  \lb{bp} \eea
We observe that
(\ref{bp}) is well defined for $q=0$ and its first integral   is
$E=q^2(\frac{Q^2}{2}+\frac{q^2}{2}-1)$.  An easy computation shows that, 
for the orbits with eccentricity $e$, the first integral with $E=\frac{e^2-1}{2}=-\hat{e}$.
The system has two
equilibria $P_\pm=(0,\pm\sqrt{2})$ lying on the level set $E=0$.  We
distinguish the level set $E=0$ into two orbits \bea l_0=\{(q,Q)\in
\mathbb{R}^2| q=0,|Q|<\sqrt{2}  \}, \eea and \bea l_+=\{(q,Q)\in
\mathbb{R}^2|q>0,  Q^2+q^2=2  \}. \eea On
$l_0$,  we have \bea  q_{l_0}(\tau)=0, \quad
Q_{l_0}(\tau)=-\sqrt{2}\tanh(\frac{\sqrt{2}}{2}\tau),  \eea and the
system on $l_+$ is \bea  q'_{l_+}=-\frac{1}{2}qQ, \quad
Q'_{l_+}=-\frac{q^2}{2}.  \eea The solution is given by \bea
q_{l_+}(\tau)=\sqrt{2}/\cosh(\frac{\sqrt{2}\tau}{2}), \quad
Q_{l_+}(\tau)=\sqrt{2}\tanh(\frac{\sqrt{2}\tau}{2}). \eea For
convenience, we also let $l^-_0=\{(p,Q)\in l_0, Q\leq0\}$ and
$l^+_0=\{(p,Q)\in l_0, Q\geq0\}$, similarly, let $l^-_+=\{(p,Q)\in
l_+, Q\leq0\}$ and $l^+_+=\{(p,Q)\in l_+, Q\geq0\}$. Obviously,
$l_0=l_0^-\cup l_0^+$ and $l_+=l_+^-\cup l_+^+$.

Throughout of the paper, we always let $\ga_e$ ($e\in[0,1)$) be the fundamental solution of (\ref{msf}). 
For simplicity, $\ga_e$ can also be considered as a
function of $\tau$. 
For $q\neq0$ we consider the matrix 
$S=diag(q^{\frac{1}{2}}I_k,q^{-\frac{1}{2}}I_k)\in\Sp(2k)$ and for $\mathcal{T}=\tau(2\pi)$,
we have $S(\mathcal{T})=S(0)$. 
We let $\hat{\ga}_e(\tau)=S(\tau)\ga_e(\tau)S^{-1}(0)$ and we observe that the associated monodromy matrix is similar to the one of  $\ga_e$. 
A direct computation shows that
\bea \frac{d}{d\tau}\hat{\ga}_e=J\hat{B}\hat{\ga}_e, \quad \hat{\ga}_e(0)=I_{2k},  \quad\tau\in[0,\mathcal{T}],  
\lb{rg} \eea
with \bea \hat{B}= \left( \begin{array}{cccc} I_{k} & \frac{Q}{4}I_k-q\mathbb{J}_{k/2} \\
\frac{Q}{4}I_k+q\mathbb{J}_{k/2} & q^2I_{k}-R \end{array}\right).
\eea

\begin{figure}[H]
\begin{minipage}[t]{0.5\linewidth}
\centering
\includegraphics[height=1.2\textwidth,width=0.696\textwidth]{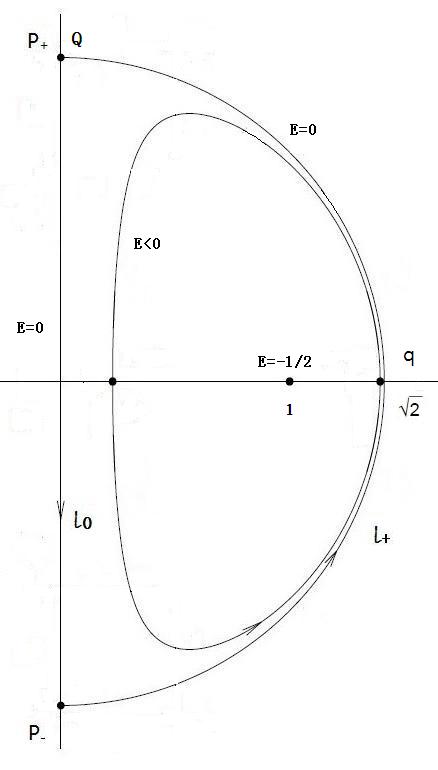}
     \caption{Phase portrait of (\ref{bp}) from \cite{MSS2}.}
\label{fig:side:a1}
\end{minipage}%
\begin{minipage}[t]{0.5\linewidth}
\centering
\includegraphics[height=1.2\textwidth,width=0.696\textwidth]{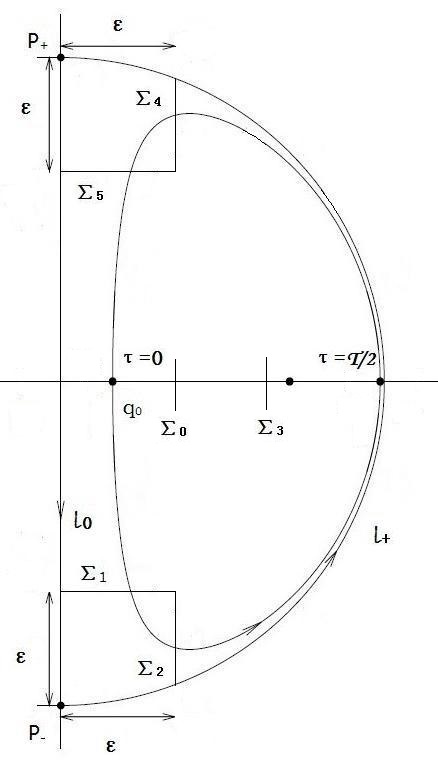}
     \caption{An illustration of the sections used in the proof of Theorem \ref{th.pr}. }
\label{fig:side:b1}
\end{minipage}
\end{figure}

The linear system (\ref{rg}) is well-defined also when $e=1$. In
this case, $E=0$, the system has two equilibria corresponding to
$P_\pm$, and the system can be considered as two heteroclinic orbits.
\begin{prop}\label{prop2.1}  $P_\pm$ is hyperbolic if and only if $\lambda_1(R)>-\frac{1}{8}$.  \end{prop}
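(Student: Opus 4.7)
The strategy is a direct spectral computation: linearize the system \eqref{rg} at the two equilibria $P_\pm=(0,\pm\sqrt{2})$, diagonalize in the $R$-eigenbasis, and read off the condition for all eigenvalues of the linearization to lie off the imaginary axis.

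First I would evaluate $\hat{B}$ at $P_\pm$. Since $q=0$ and $Q=\pm\sqrt{2}$ there, the blocks $\pm q\mathbb{J}_{k/2}$ and $q^2 I_k$ both vanish, so
\[
\hat{B}\big|_{P_\pm}=\begin{pmatrix} I_k & \pm\tfrac{\sqrt{2}}{4}I_k \\ \pm\tfrac{\sqrt{2}}{4}I_k & -R \end{pmatrix},
\qquad
J\hat{B}\big|_{P_\pm}=\begin{pmatrix} \pm\tfrac{\sqrt{2}}{4}I_k & -R \\ -I_k & \mp\tfrac{\sqrt{2}}{4}I_k \end{pmatrix}.
\]
Every block except the lower right is a scalar multiple of $I_k$, so the matrix commutes with the block-diagonal action of any orthogonal change of basis on $\mathbb{R}^k$. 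Since $R$ is symmetric, I would diagonalize it by an orthogonal transformation $U$, which reduces $J\hat{B}|_{P_\pm}$ to a block-diagonal matrix whose blocks are, for each eigenvalue $\lambda$ of $R$, the $2\times 2$ matrix
\[
A_\lambda^{\pm}=\begin{pmatrix} \pm\tfrac{\sqrt{2}}{4} & -\lambda \\ -1 & \mp\tfrac{\sqrt{2}}{4} \end{pmatrix}.
\]

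Next I would compute the characteristic polynomial of $A_\lambda^{\pm}$. A direct expansion gives
\[
\det(A_\lambda^{\pm}-\mu I_2)=\mu^2-\tfrac{1}{8}-\lambda,
\]
independently of the choice of sign in $P_\pm$. Hence the spectrum of $J\hat{B}|_{P_\pm}$ is precisely the union, over eigenvalues $\lambda$ of $R$, of the pairs $\pm\sqrt{\lambda+\tfrac{1}{8}}$. By definition, $P_\pm$ is hyperbolic iff none of these eigenvalues lies on the imaginary axis, i.e.\ iff $\lambda+\tfrac{1}{8}>0$ for every eigenvalue $\lambda$ of $R$. Since $\lambda_1(R)$ denotes the smallest eigenvalue, this is equivalent to $\lambda_1(R)>-\tfrac{1}{8}$, completing the proof.

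The computation is essentially routine; the only step that requires a bit of care is ensuring that the sign of $Q=\pm\sqrt{2}$ does not affect the conclusion. This is what makes the factor $a^2=1/8$ appear cleanly in the characteristic polynomial: the off-diagonal scalars are $\pm\tfrac{\sqrt{2}}{4}$ and their product in the determinant is always $-1/8$, so $P_+$ and $P_-$ share the same stability type, a fact which is also consistent with the time-reversal symmetry of \eqref{bp}.
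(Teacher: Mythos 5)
Your proof is correct and follows essentially the same route as the paper's: linearize \eqref{rg} at $P_\pm$, use an orthogonal change of basis diagonalizing $R$ to split $J\hat{B}|_{P_\pm}$ into $2\times2$ blocks, and observe that each block has eigenvalues $\pm\sqrt{\lambda+\tfrac{1}{8}}$, so hyperbolicity is equivalent to $\lambda_1(R)>-\tfrac{1}{8}$. The paper leaves these "simple computations" implicit, and your write-up merely supplies them (note your sign convention for $J$ is the opposite of the paper's $J_{2j}$, which only negates the spectrum and changes nothing).
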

\begin{proof} 
We observe that   at points $P_\pm$, the linear part
with form
 $D_\pm=J_k\left( \begin{array}{cccc} I_{k} & \pm\frac{\sqrt{2}}{4}I_k \\
\pm\frac{\sqrt{2}}{4}I_k & -R
\end{array}\right) $.  $P_\pm$ is hyperbolic if and only if the eigenvalue of
$D_\pm$ is not on the imaginary line. Since $R$ is diagonalizable by choose suitable bases, the
results is from simple computations.
\end{proof}

Given $\varepsilon<1/8$, we define the
following sections (See figure 2)
\bea \Sigma_{0}=\{(q, Q) |0<q<\varepsilon, Q=0\}, \ \ \ \Sigma_{1}=\{(q, Q)|0<q<\varepsilon, 
Q=-\sqrt{2}+\varepsilon\}, \nonumber \\
\Sigma_{2}=\{(q, Q) |q=\varepsilon, -\sqrt{2-\varepsilon^2}<Q<\varepsilon-\sqrt{2}\},
\ \ \ \Sigma_{3}=\{(q, Q)|0<\sqrt{2}-q<\varepsilon, Q=0\}.
\nonumber
\\ \Sigma_{4}=\{(q, Q) |q=\varepsilon, \sqrt{2}-\varepsilon<Q<\sqrt{2-\varepsilon^2}\},\ \
\ \ \Sigma_{5}=\{(q, Q)|0<q<\varepsilon,
Q=\sqrt{2}-\varepsilon\}. \nonumber \eea

If  $\hat{e}=\frac{1-e^2}{2}$, $\tilde{E}(\hat{e}):=\{(q,Q), E=-\hat{e}\}$ denotes 
the energy level set. A direct computation
show that $\tilde{E}(\hat{e})$ intersects $\Sigma_i$, $i=0,\cdots,5$ simply, i.e. 
intersect  at exactly one point  when 
$\hat{e}<\varepsilon^3$, $\varepsilon<1/8$.  
In this case,  the Poincar\'{e} map $\mathcal{P}_{i}: \Sigma_{i-1}\mapsto
\Sigma_{i}$, $i=1,\cdots,5$ is well defined.  
In fact, $\hat{e}<\varepsilon^2$ ensures the intersection with $\Sigma_0, \Sigma_3$.
For  the intersection of $\tilde{E}(\hat{e})$ and $\Sigma_1$,  
we observe that, since  $Q=-\sqrt{2}+\varepsilon$, are solutions of  
$ q^2((-\sqrt{2}+\varepsilon)^2/2+q^2/2-1)=-\hat{e}$,
we get $q^2_\pm=\frac{1}{2}(\varepsilon(2\sqrt{2}-\varepsilon)\pm((\varepsilon(2\sqrt{2}
-\varepsilon))^2-8\hat{e})^{\frac{1}{2}})$. 
For $\varepsilon<1/8$,  $\hat{e}<\varepsilon^3$, we have
 $q^2_-<\varepsilon^2<q^2_+$, which guarantee the simple intersection.  
 Since $\tilde{E}(\hat{e})$ is convex, this also
ensure the  simple intersection with $\Sigma_2$, and we have  the simple 
intersection of $\Sigma_4, \Sigma_5$ by symmetry.

   Following \cite{MSS2}, we denote by $\tau_{l_{0}}>0$ the time defined 
   by $Q_{l_{0}}(\tau_{l_{0}})=-\sqrt{2}+\varepsilon$ and 
   $\tau_{l_{+}}>0$ such that $q_{l_{+}}(-\tau_{l_{+}})=\varepsilon$.
It is obvious $Q_{l_{0}}(-\tau_{l_{0}})=\sqrt{2}-\varepsilon$ and
$q_{l_{+}}(\tau_{l_{+}})=\varepsilon$.  $\tau_{l_{0}}$ and
$\tau_{l_{+}}$ are finite and independent of $e$
once $\varepsilon$ is fixed.
 Let
$q_{0}=q(0)$ and $\tau_{1}$, $\tau_{2}$ be the smallest positive
time such that $(q(\tau_{1}), Q(\tau_{1}))\in\Sigma_{1}$ and
$(q(\tau_{2}), Q(\tau_{2}))\in\Sigma_{2}$ . It is clear that
$q_{0}$, $\tau_{1}$ and $\tau_{2}$ depend on $\hat{e}$. Moreover
$q_{0}\rightarrow0$, $\tau_{1}\rightarrow\tau_{l_{0}}$ and
$\mathcal{T}/2-\tau_{2}\rightarrow\tau_{l_{+}}$ when
$\hat{e}\rightarrow0$. Similarly,
 let $\tau_{4}$, $\tau_{5}$ be the
smallest positive time such that $(q(\tau_{4}),
Q(\tau_{4}))\in\Sigma_{4}$ and $(q(\tau_{5}),
Q(\tau_{5}))\in\Sigma_{5}$. We have $\tau_4-\mathcal{T}/2\to
\tau_{l_{+}}$ and $\mathcal{T}-\tau_5\to \tau_{l_{0}} $.
The next lemma was proved in\cite{MSS2}; however, for the reader's convenience, 
we proved it with a slight sharp estimates.
\begin{lem}  For $\varepsilon<1/8$, $\hat{e}<\varepsilon^3$, we have \\
(a) \bea \sqrt{2}\ln(\frac{\varepsilon}{q(\tau_1)})\leq \tau_2-\tau_1
\leq \frac{2}{\sqrt{2}-\varepsilon}\ln(\frac{\varepsilon}{q(\tau_1)}), \lb{adde1} \eea
(b) \bea \int_{\tau_1}^{\tau_2}q(\tau)d\tau\leq\frac{2\varepsilon}{\sqrt{2}-\varepsilon}<2\varepsilon,
\lb{adde2} \eea
(c) \bea \int_{\tau_1}^{\tau_2}|Q(\tau)+\sqrt{2}|d\tau<2\varepsilon. \lb{adde3} \eea
\end{lem}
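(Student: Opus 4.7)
The plan is to exploit the first integral $E = q^2(Q^2/2 + q^2/2 - 1) = -\hat{e}$, which on the trajectory reads $Q^2 + q^2 + 2\hat{e}/q^2 = 2$, together with the derived identity
$$Q' = \frac{q^2}{2} - \frac{\hat{e}}{q^2}$$
obtained by substituting the first integral into the second blow-up equation. First I would establish the basic dynamical information on $[\tau_1,\tau_2]$: since $q' = -qQ/2$ and $Q$ is negative at both endpoints, $q$ is strictly increasing from $q(\tau_1)$ to $\varepsilon$ and $Q$ remains in $(-\sqrt{2},-\sqrt{2}+\varepsilon]$. Viewing $|Q|^2 = 2 - q^2 - 2\hat{e}/q^2$ as a function of $q$, it is concave with unique maximum at $q = (2\hat{e})^{1/4}$, so its minimum on $[q(\tau_1),\varepsilon]$ is attained at an endpoint. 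A direct computation gives $|Q(\tau_2)|^2 - (\sqrt{2}-\varepsilon)^2 = 2\sqrt{2}\varepsilon - 2\varepsilon^2 - 2\hat{e}/\varepsilon^2 > 0$ under the hypotheses $\varepsilon < 1/8$, $\hat{e} < \varepsilon^3$, which yields the key uniform bound $\sqrt{2}-\varepsilon \leq |Q(\tau)| < \sqrt{2}$ on $[\tau_1,\tau_2]$.

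Parts (a) and (b) then follow cleanly. For (a), integrating the identity $q'/q = |Q|/2$ between $\tau_1$ and $\tau_2$ produces the exact formula
$$\int_{\tau_1}^{\tau_2}|Q(\tau)|\,d\tau \;=\; 2\ln\!\bigl(\varepsilon/q(\tau_1)\bigr),$$
and sandwiching $|Q|$ between $\sqrt{2}-\varepsilon$ and $\sqrt{2}$ gives both bounds in \eqref{adde1}. For (b), rewriting $q = 2q'/|Q|$ and using the lower bound on $|Q|$ together with $\int_{\tau_1}^{\tau_2} q'\,d\tau = \varepsilon - q(\tau_1) < \varepsilon$ yields $\int q\,d\tau \leq 2\varepsilon/(\sqrt{2}-\varepsilon) < 2\varepsilon$, where the final step uses $\sqrt{2}-\varepsilon > 1$.

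Part (c) is the main obstacle: the naive estimate $|Q+\sqrt{2}| \leq \varepsilon$ multiplied by the length $\tau_2-\tau_1 = O(\log(1/q(\tau_1)))$ is useless because this length diverges as $\hat{e}\to 0$. The crucial step is the algebraic identity coming from the first integral,
$$\sqrt{2}-|Q| \;=\; \frac{2-Q^2}{\sqrt{2}+|Q|} \;=\; \frac{q^2 + 2\hat{e}/q^2}{\sqrt{2}+|Q|},$$
which, after using $\sqrt{2}+|Q|\geq 2\sqrt{2}-\varepsilon$, reduces the problem to estimating $\int_{\tau_1}^{\tau_2}(q^2+2\hat{e}/q^2)\,d\tau$. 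I would split this as $2\int q^2\,d\tau - \int(q^2 - 2\hat{e}/q^2)\,d\tau$: the first term is bounded by $\varepsilon^2/(\sqrt{2}-\varepsilon)$ via $(q^2)' = q^2|Q|$ and the uniform lower bound on $|Q|$, while the second term, thanks to $Q' = q^2/2 - \hat{e}/q^2$, collapses to the boundary term $2(Q(\tau_2)-Q(\tau_1))$, which is negative and bounded in absolute value by $2\varepsilon$ since both endpoint values of $Q$ lie in the interval $(-\sqrt{2},-\sqrt{2}+\varepsilon]$. Assembling these pieces and dividing by $2\sqrt{2}-\varepsilon > 2$ gives the desired estimate $< 2\varepsilon$ for $\varepsilon < 1/8$.

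The hard part will be (c): one must completely avoid any bound proportional to $\tau_2-\tau_1$, and the only mechanism I see is to combine the two consequences of the first integral, namely the algebraic factorization $2-Q^2 = q^2+2\hat{e}/q^2$ and the conversion of $\int\hat{e}/q^2\,d\tau$ into a pure boundary term via $Q'$. Parts (a) and (b) serve largely as warm-ups that fix the sign conventions and the uniform lower bound on $|Q|$ that all three estimates share.
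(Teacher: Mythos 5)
Your proposal is correct. Parts (a) and (b) coincide with the paper's argument: the paper also derives $\tfrac{1}{2}(\sqrt{2}-\varepsilon)q\leq q'\leq \tfrac{\sqrt{2}}{2}q$ from the uniform bound $-\sqrt{2}\leq Q\leq -\sqrt{2}+\varepsilon$ and integrates (your extra step of justifying that bound from the concavity of $2-q^2-2\hat{e}/q^2$ is a welcome addition, since the paper simply asserts it). For part (c), however, you take a genuinely different route. The paper splits $Q+\sqrt{2}=y+F(q)$ with $F(q)=\sqrt{2}-\sqrt{2-q^2}$ describing the heteroclinic $l_+$ as a graph over $q$ and $y$ the deviation of the actual orbit from $l_+$; it then bounds $\int F(q)\,d\tau$ by $F(q)\leq cq$ together with (b), and bounds $\int y\,d\tau$ by a Gronwall-type differential inequality $y'=-\sqrt{2}y(1+o_1)$ expressing exponential decay of the deviation. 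You instead exploit the first integral algebraically, writing $|Q+\sqrt{2}|=(q^2+2\hat{e}/q^2)/(\sqrt{2}+|Q|)$, and then dispose of the potentially dangerous $\int 2\hat{e}/q^{2}\,d\tau$ by recognizing $q^2-2\hat{e}/q^2=2Q'$ as an exact derivative, so that it collapses to the boundary term $2(Q(\tau_2)-Q(\tau_1))$ of size at most $2\varepsilon$. Both mechanisms achieve the essential point --- a bound independent of the (divergent) length $\tau_2-\tau_1$ --- but yours is purely algebraic, avoids the comparison ODE for $y$ entirely, and in fact yields a slightly sharper constant (roughly $\varepsilon$ rather than $2\varepsilon$); the paper's version has the merit of making the geometric picture (orbit shadowing $l_+$ exponentially fast) explicit, which is reused in spirit in the later approximation arguments.
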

\begin{proof}  To prove (a), we observe that for $\tau\in[\tau_1,\tau_2]$,  
$-\sqrt{2}\leq Q(\tau)\leq -\sqrt{2}+\varepsilon$.
Multiplying by $-q(\tau)/2$ we get
$\frac{1}{2}(\sqrt{2}-\varepsilon)q(\tau)\leq q'(\tau)\leq \frac{\sqrt{2}}{2}q(\tau)  $ and 
hence Equation (\ref{adde1}) by a direct integration.  To prove (b),  please note that
$\int_{\tau_1}^{\tau_2}q(\tau)d\tau\leq \frac{2}{\sqrt{2}-\varepsilon}\int_{\tau_1}^{\tau_2}
q'(\tau)d\tau\leq \frac{2}{\sqrt{2}-\varepsilon}q(\tau_2)$. Equation  (\ref{adde2}) 
follows by observing that $q(\tau_2)=\varepsilon$.

To prove (c), let $F(q)=\sqrt{2}-\sqrt{2-q^2}$. Then  
$F(q)\leq cq$ for $q<\varepsilon$,  with $c=\frac{\varepsilon}{\sqrt{2}+\sqrt{2-\varepsilon^2}}$. 
From  (b), we have \bea \int_{\tau_1}^{\tau_2}F(q)d\tau \leq \frac{2c\varepsilon}{\sqrt{2}-\varepsilon}.
\lb{adde4}\eea   
Please observe that, near $P_-$,  $l_+$ is graph of $F(q)-\sqrt{2}$. Let $y=Q+\sqrt{2}-F(q)$, 
then $0<y<\varepsilon$.  By a direct 
computation it follows  that \bea  y'=(\frac{q^2}{2\sqrt{2-q^2}}-\sqrt{2-q^2})y+y^2/2 = -\sqrt{2}y(1+o_1), \nonumber \eea
where $|o_1|\leq\varepsilon $. Thus we have $-\sqrt{2}y(1+\varepsilon)\leq y'\leq-\sqrt{2}y(1-\varepsilon)$, then 
\bea \int_{\tau_1}^{\tau_2}yd\tau \leq \frac{1}{\sqrt{2}(1-\varepsilon)}y(\tau_1)\leq \frac{\varepsilon}{\sqrt{2}
(1-\varepsilon)}.
\lb{adde5}\eea
From (\ref{adde4}, \ref{adde5}), we have \bea \int_{\tau_1}^{\tau_2}|Q+\sqrt{2}|d\tau \leq \int_{\tau_1}^{\tau_2}(y+F(q))d\tau\leq 
\frac{\varepsilon}{\sqrt{2}(1-\varepsilon)}+\frac{2c\varepsilon}{\sqrt{2}-\varepsilon}<2\varepsilon. \lb{adde6}\eea
\end{proof}

\subsection{The index limit  on the non-hyperbolic case  }\label{subsec3.1}

We  start by briefly reviewing the  Maslov index theory
\cite{Ar,CLM, RS1} in this subsection.  Let $(\mathbb{R}^{2n},\omega)$ be the standard
symplectic space, and $Lag(2n)$ the Lagrangian Grassmanian, i.e. the set of
Lagrangian subspaces of $(\mathbb{R}^{2n},\omega)$.   For two 
continuous paths $L_1(t),L_2(t)$, $t\in[a,b]$ in $Lag(2n)$,
the Maslov index $\mu(L_1,L_2)$ is an integer invariant.   
Here we use the definition from \cite{CLM}.
We list several properties of the Maslov index. The details could be found
in \cite{CLM}.

Property I (Reparametrization invariance)  Let
$\vr:[c,d]\rightarrow [a,b]$ be a continuous and piecewise smooth
function with $\vr(c)=a$, $\vr(d)=b$, then \bea \mu(L_1(t),
L_2(t))=\mu(L_1(\vr(\tau)), L_2(\vr(\tau))). \lb{adp1.1} \eea

Property II (Homotopy invariant with end points)  For two continuous
family of Lagrangian path $L_1(s,t)$, $L_2(s,t)$, $0\leq s\leq 1$, $a\leq
t\leq b$, and satisfies  $dim L_1(s,a)\cap L_2(s,a)$ and $dim
L_1(s,b)\cap L_2(s,b)$ is constant, then  \bea \mu(L_1(0,t),
L_2(0,t))=\mu(L_1(1,t),L_2(1,t)). \lb{adp1.2} \eea

Property III (Path additivity)  If $a<c<b$, then then 
\bea \mu(L_1(t),L_2(t))=\mu(L_1(t),L_2(t)|_{[a,c]})+\mu(L_1(t),L_2(t)|_{[c,b]}).
\lb{adp1.3} \eea

Property IV (Symplectic invariance)  Let $\ga(t)$, $t\in[a,b]$ is a
continuous path in $\Sp(2n)$, then \bea \mu(L_1(t),L_2(t))=\mu(\ga(t)L_1(t), \ga(t)L_2(t)). \lb{adp1.4} \eea

Property V (Symplectic additivity)  Let $W_i$, $i=1,2$ be symplectic space, 
$L_1,L_2\in C([a,b], Lag(W_1))$ and $\hat{L}_1,\hat{L}_2\in C([a,b], Lag(W_2))$, 
then \bea \mu(L_1(t)\oplus \hat{L}_1(t),L_2(t)\oplus \hat{L}_2(t))= \mu(L_1(t),L_2(t))+
\mu(\hat{L}_1(t),\hat{L}_2(t)). 
\lb{adp1.5add} \eea

In the case $L_1(t)\equiv V_0$,  $L(t)=\ga(t)V$, where $\ga$ is a path of  symplectic matrix we have 
a monotonicity property (cfr. \cite{HOW}).

Property VI (Monotone property)   Suppose for $j=1,2$,   $L_j(t)=\ga_j(t)V$, 
where  $\dot{\ga}_j(t)=JB_j(t)\ga_j(t)$ with $\ga_j(t)=I_{2n}$. 
If $B_1(t)\geq B_2(t)$ in the sense that $B_1(t)-B_2(t)$ is non-negative matrix, 
then for any $V_0,V_1\in Lag(2n)$, we have
\bea  \mu(V_0, \ga_1V_1)\geq  \mu(V_0, \ga_2V_1).    \lb{adp1.5monotone}       \eea

One efficient way to study the Maslov
index is via crossing form introduced by \cite{RS1}.  
 For simplicity and since it is  enough for our purpose, we only
review the case of the Maslov index for a path of Lagrangian subspace
with respect to a fixed Lagrangian subspace. Let $\Lambda(t)$ be a
$C^1$-curve of Lagrangian subspaces with $\Lambda(0)=\Lambda$, and
let $V$ be a fixed Lagrangian subspace which is transversal to
$\Lambda$. For $v\in \Lambda$ and small $t$, define $w(t)\in V$ by
$v+w(t)\in \Lambda(t)$. Then the form \bea
Q(v)=\left.\frac{d}{dt}\right|_{t=0}\omega(v,w(t)) \lb{1.3a} \eea is
independent of the choice of $V$ (cfr.\cite{RS1}).  A crossing for
$\Lambda(t)$ is some $t$ for which $\Lambda(t)$ intersects $W$
nontrivially, i.e. for which $\Lambda(t)\in\overline{O_1(W)}$. The
set of crossings is compact. At each crossing, the crossing form is
defined to be \bea \Gamma(\Lambda(t),W,t)=Q|_{\Lambda(t)\cap W}.
\lb{1.3b} \eea A crossing is called {\it regular} if the crossing
form is non-degenerate. If the path is given by
$\Lambda(t)=\gamma(t)\Lambda$ with $\gamma(t)\in \Sp(2n)$ and
$\Lambda\in Lag(2n)$, then the crossing form is
equal to $(-\gamma(t)^TJ\dot{\gamma}(t)v,v)$, for $v\in
\gamma(t)^{-1}(\Lambda(t)\cap W)$, where $(\,,\,)$ is the standard
inner product on $\mathbb{R}^{2n}$.

For $\Lambda(t)$  and $W$ as before, if the path has only regular
crossings, following \cite{LZ}, the Maslov index is equal to \bea
\mu(W,\Lambda(t))=m^+(\Gamma(\Lambda(a),W,a))+\sum_{a<t<b}  sign
(\Gamma(\Lambda(t),W,t))-m^-(\Gamma(\Lambda(b),W,b)),\lb{1.3c}\eea
where the sum runs  all over the  crossings $t\in(a,b)$ and $m^+,
m^-$ are the dimensions of  positive and negative definite
subspaces, $sign=m^+-m^-$ is the signature. We note that for a
$C^1$-path $\Lambda(t)$ with fixed end points, and we can make it only
have regular crossings by a small perturbation.

In contrast with the definition given in equation 
(\ref{1.3c}), the Maslov index defined in \cite{RS1} has the following form
\bea
\mu_{RS}(\Lambda(t),W)=\frac{1}{2}sign(\Gamma(\Lambda(a),W,a))+\sum_{a<t<b}  sign
(\Gamma(\Lambda(t),W,t))+\frac{1}{2}sign(\Gamma(\Lambda(b),W,b)).\lb{rsm}\eea
We observe that,  for the non-degenerate path (i.e. $L(t)\cap W=0$ for $t=a,b$),
\bea \mu(W,L(t))=\mu_{RS}(L(t),W).  \nonumber \eea

Note that for $M\in \Sp(2n)$, $Gr(M):=\{(x,Mx)\,\,|\,\,
x\in\mathbb{R}^{2n}\}$ is a Lagrangian subspace of the symplectic vector
space $(\mathbb{R}^{2n}\oplus\mathbb{R}^{2n},-\omega\oplus\omega)$. 
Let $\gamma(t)$ be a path of symplectic matrices,
$\Lambda=\Lambda_1\oplus \Lambda_2\in  Lag(4n)$, where
$\Lambda_i\in Lag(2n)$, for $i=1,2$, then following
\cite{RS1} and  by computing the crossing forms, we have \bea
\mu(\Lambda_1\oplus
\Lambda_2,Gr(\gamma(t)))=\mu(\Lambda_2,\gamma(t)\Lambda_1). \lb{1r2}
\eea

For a continuous path $\ga(t)\in\Sp(2n)$ with $\ga(0)=I_{2n}$, the
Maslov-type index $i_\omega(\ga)\in \mathbb  Z$  is a very useful tool
in studying the periodic orbits of Hamiltonian systems \cite{Lon4}. The
next lemma  ( \cite{LZ} Corollary 2.1.)  gives its relation with
the Maslov index.
\begin{lem}\lb{lem2.2}For any $\gamma(t)$, we have
  \be
i_1(\gamma)+n=\mu(\triangle, Gr(\gamma (t))), \lb{1.7} \ee and \be
i_\omega(\gamma)=\mu(Gr(\omega), Gr(\gamma
(t))),\omega\in\mathbb  U\backslash\{ 1\}, \lb{1.7.1} \ee where $\Delta$ is
the diagonal $Gr(I_{2n})$, $Gr(\omega)=Gr(\omega I_{2n})$.
\end{lem}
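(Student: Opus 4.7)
The plan is to identify both sides of (\ref{1.7}) and (\ref{1.7.1}) as signed intersection counts of the same discrete set of times. By definition, the Maslov-type index $i_\omega(\gamma)$ counts, after a $C^0$-small generic perturbation, the signed intersections of $\gamma$ with the singular hypersurface $D_\omega = \{M \in \Sp(2n) : \det(M - \omega I_{2n}) = 0\}$. On the other hand, writing $\mathcal{W} = (\mathbb{R}^{4n}, -\omega_0 \oplus \omega_0)$, both $\Delta = Gr(I_{2n})$ and $Gr(\gamma(t))$ are Lagrangian in $\mathcal{W}$, so the right-hand sides are computed by applying the crossing-form formula (\ref{1.3c}) to the Lagrangian path $t \mapsto Gr(\gamma(t))$. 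The identification of the two sets of crossings is immediate from
$$Gr(\gamma(t)) \cap Gr(\omega) \neq 0 \;\Longleftrightarrow\; \exists\, u \neq 0,\ \gamma(t)u = \omega u \;\Longleftrightarrow\; \gamma(t) \in D_\omega.$$

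By homotopy invariance with fixed endpoints (Property II) and a generic $C^1$-small perturbation, I may assume every interior crossing is regular with $\dim\ker(\gamma(t_j) - \omega I_{2n}) = 1$. For such a $t_j$, a direct crossing-form computation at $v = (u, \omega u) \in Gr(\omega) \cap Gr(\gamma(t_j))$, using the path $t \mapsto Gr(\gamma(t))$ in $\mathcal{W}$, reduces on $\ker(\gamma(t_j) - \omega I_{2n})$ to the quadratic form $u \mapsto \langle J_{2n} \dot\gamma(t_j)\gamma(t_j)^{-1} u,\, u\rangle$ (with a sign dictated by the orientation convention on $\mathcal{W}$). This is, up to the standard sign convention, precisely the bilinear form whose signature prescribes the local contribution to $i_\omega(\gamma)$ at $t_j$ as defined in \cite{Lon4}. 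Summing over $j$ and applying (\ref{1.3c}), the interior contributions to the two sides agree.

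The only remaining discrepancy comes from the boundary terms in (\ref{1.3c}). At the right endpoint $t = T$, both conventions coincide, so no discrepancy arises there. At the left endpoint $t = 0$, since $\gamma(0) = I_{2n}$: for $\omega \neq 1$ one has $\Delta \cap Gr(\omega) = 0$, so $t = 0$ is not a crossing and (\ref{1.7.1}) follows at once. For $\omega = 1$ the picture is genuinely different, since $Gr(\gamma(0)) = \Delta = Gr(1)$ coincides with the reference Lagrangian, producing a left-endpoint crossing of maximal dimension $2n$. The main obstacle is therefore to verify that the positive inertia index $m^+(\Gamma(\Delta, \Delta, 0))$ equals exactly $n$, thereby producing the offset in (\ref{1.7}). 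I would resolve this by a regularization argument: on a small initial interval $[0, \delta]$, replace $\gamma$ by $\gamma^\varepsilon(t) = e^{\varepsilon t J_{2n}}\gamma(t)$, which for $\varepsilon > 0$ moves the left endpoint off $\Delta$ and makes the path transverse there; then compute explicitly, via the local model $t \mapsto e^{tJ_{2n}}$, that this initial arc contributes $n$ to $\mu(\Delta, Gr(\cdot))$ while, by the normalization convention, contributing $0$ to $i_1(\gamma)$. Passing to the limit $\varepsilon, \delta \to 0$ and combining with the interior identification already established yields (\ref{1.7}). The remaining bookkeeping is just an application of Properties I--III from \S2.2.
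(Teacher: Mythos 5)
The paper does not actually prove this lemma: it is quoted verbatim from \cite{LZ} (Corollary 2.1), so there is no internal argument to compare against. Judged on its own terms, your proposal has the right architecture for the interior crossings but contains a genuine error at the one place where the content of the lemma lives, namely the left endpoint in the $\omega=1$ case.

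Concretely: for the path $t\mapsto Gr(\gamma(t))$ in $(\mathbb{R}^{4n},-\omega_0\oplus\omega_0)$ with $\dot\gamma=JB(t)\gamma$, the crossing form at a crossing time computes to $u\mapsto\langle B(t)u,u\rangle$ on $\ker(\gamma(t)-I_{2n})$. At $t=0$ the intersection $\Delta\cap Gr(\gamma(0))=\Delta$ is all of $\mathbb{R}^{2n}$, so the left-endpoint contribution in (\ref{1.3c}) is $m^+\bigl(B(0)\bigr)$, which is \emph{not} $n$ in general: for $\gamma(t)=e^{tJ}$ on $[0,2\pi]$ with $n=1$ it equals $2$, and indeed $\mu(\Delta,Gr(\gamma))=2=i_1(\gamma)+1$ with $i_1=1$, the entire index coming from that endpoint; for $\gamma(t)=e^{-tJ}$ it equals $0$ and $i_1=-1$. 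So the claim that "$m^+(\Gamma(\Delta,\Delta,0))=n$" is false, and the $+n$ offset cannot be localized at $t=0$ in the way you describe. The proposed repair does not work either: $\gamma^\varepsilon(t)=e^{\varepsilon tJ}\gamma(t)$ still satisfies $\gamma^\varepsilon(0)=I_{2n}$, so it does not move the left endpoint off $\Delta$; it merely replaces the crossing form $B(0)$ by $B(0)+\varepsilon I$, which has the same inertia when $B(0)$ is nondegenerate. Moreover your model computation is internally inconsistent: the short arc $e^{tJ}|_{[0,\delta]}$ contributes $2n$ to $\mu(\Delta,Gr(\cdot))$ and has $i_1=n$ (not $n$ and $0$); the difference happens to be $n$, which is why the final formula is true, but your argument does not produce that difference. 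A correct proof must compare Long's endpoint/perturbation convention with the CLM endpoint convention globally — e.g.\ show that both sides of (\ref{1.7}) have identical jumps across $D_1$ and identical behaviour under catenation and homotopy, so their difference is a constant on the space of paths starting at $I_{2n}$, and then evaluate that constant on a single normal form such as $e^{-\varepsilon tJ}$ (where $\mu=0$ and $i_1=-n$). That is essentially the route of \cite{LZ}. Your treatment of (\ref{1.7.1}) is essentially sound modulo the (unverified but correct) assertion that Long's degenerate-endpoint convention matches the $-m^-$ term at $t=T$, and modulo the complexification needed for $\omega\notin\{\pm1\}$.
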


For $V_1,V_2\in Lag(2n)$ and a Lagrangian path $t \mapsto\Lambda(t)$, the
difference of the Maslov indexes with respect to the 
two Lagrangian subspaces is 
given in terms of the H\"{o}rmander index, i.e.   \cite{RS1} (Th.3.5.) \bea
s(V_0, V_1; \Lambda(0), \Lambda(1)) = \mu(V_0,
\Lambda)-\mu(V_1,\Lambda).\eea Obviously, \bea s(V_0, V_1;
\Lambda(0), \Lambda(1)) = s(V_0, V_1; e^{-\varepsilon J}\Lambda(0),
e^{-\varepsilon J}\Lambda(1)), \lb{hp}\eea for $\varepsilon>0$ small
enough.
 The H\"{o}rmander index is
independent of the choice of the path connecting $\Lambda(0)$ and $\Lambda(1)$.
Under the  non-degenerate condition, i.e.  $V_1,V_2$  are
transversal to $\Lambda(0), \Lambda(1)$ correspondingly. Two basic  properties are given below
\bea s(V_0,V_1;\Lambda(0),\Lambda(1))=-s(V_1,V_0;
\Lambda(0),\Lambda(1) ),\nonumber\eea\bea
s(\Lambda(0),\Lambda(1);V_0,V_1)=-s(V_0,V_1; \Lambda(0),\Lambda(1)
),\nonumber\eea If $V_j = Gr(A_j)$, $\Lambda(j)=Gr(B_j)$ for symmetry
matrices $A_j$ and $B_j$, then \bea s(V_0,V_1;
\Lambda(0),\Lambda(1))=\frac{1}{2}sign(B_0-A_1)+\frac{1}{2}sign(B_1-A_0)-
\frac{1}{2}sign(B_1-A_1)-\frac{1}{2}sign(B_0-A_0),
\lb{hc} \eea
where for a symmetric  matrix $A$,   $sign(A)$  is  the signature of the symmetric form 
$\langle A\cdot, \cdot\rangle$. 
 A direct corollary shows that \bea |s(V_0,V_1;
\Lambda(0),\Lambda(1))|\leq 2n.  \lb{hormd}\eea A sharp estimate for  the difference  
of Neumann and Dirichlet boundary conditions has been given in
\cite{LZZ}.

Let $\ga$ be a
fundamental solution of a periodic orbit, then $\ga\in C([0,T],\Sp(2n))$ with $\ga(0)=I_{2n}$,  
as we have mentioned in the
introduction, the Maslov index $\mu(V_n,\ga V_n)$, $\mu(V_d,\ga
V_d)$ and Maslov-type index $i_1(\ga)$, $i_{-1}(\ga)$ play an
important role in the study of stability problem.

We come back to ERE, and we recall that
$\hat{\ga}_e(\tau)=S(\tau)\ga_e(\tau)S^{-1}(0)$, with
$S=diag\(q^{\frac{1}{2}}I_k,q^{-\frac{1}{2}}I_k\)\in\Sp(2k)$. Please note that 
 $S(\mathcal{T})=S(0)$, and the path $S(\tau)$ is contractible in $\Sp(2n)$.  In fact, if we set $S_\alpha=diag\(q_\alpha^{\frac{1}{2}}I_k,q_\alpha^{-\frac{1}{2}}I_k\)$ with $q_\alpha=(1+\alpha\cos(t))^{1/2}$,  then $S_\alpha$ is homotopy to the constant path $S_0(\tau)\equiv I_{2k}$ by $S_\alpha$ for $\alpha\in[0,e]$.  We have
\begin{lem}\lb{lem2.3}  For $e\in[0,1)$,   suppose  $\dim Gr(S_\alpha(\mathcal{T})\ga_{e}(\mathcal{T}) S_\alpha^{-1}(0))\cap\Lambda$ is constant for $\alpha\in[0,e]$, then    $\mu(Gr(\ga_{e}),\Lambda)=\mu(Gr(\hat{\ga}_{e}),\Lambda)$.
\end{lem}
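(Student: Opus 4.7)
The plan is to interpolate continuously from $\ga_e$ to $\hat\ga_e$ through a one-parameter family and then invoke Property II (homotopy invariance of the Maslov index with controlled endpoint dimensions). Concretely, I would define
\bea
\ga_{e,\alpha}(\tau)\ :=\ S_\alpha(\tau)\,\ga_e(\tau)\,S_\alpha^{-1}(0),\qquad \alpha\in[0,e],\ \tau\in[0,\mathcal{T}],
\eea
so that $\ga_{e,0}=\ga_e$ (because $S_0\equiv I_{2k}$) and $\ga_{e,e}=\hat\ga_e$. This is jointly continuous in $(\alpha,\tau)$, and for each $\alpha$ the path $\ga_{e,\alpha}(\cdot)$ lies in $\Sp(2k)$; hence $(\alpha,\tau)\mapsto Gr(\ga_{e,\alpha}(\tau))$ is a continuous two-parameter family of Lagrangian subspaces in $(\R^{2k}\oplus\R^{2k},-\omega\oplus\omega)$.

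Next I would verify the two endpoint conditions required by Property II. At $\tau=0$ a direct computation gives $\ga_{e,\alpha}(0)=S_\alpha(0)\,I_{2k}\,S_\alpha^{-1}(0)=I_{2k}$, so $Gr(\ga_{e,\alpha}(0))=\Dl$ is independent of $\alpha$ and $\dim \Dl\cap\Lm$ is trivially constant in $\alpha$. At $\tau=\mathcal{T}$, using the crucial identity $S_\alpha(\mathcal{T})=S_\alpha(0)$ (which holds because $q_\alpha$ takes the same value at the two endpoints by $2\pi$-periodicity of $\cos(t)$), the terminal matrix is $S_\alpha(\mathcal{T})\ga_e(\mathcal{T})S_\alpha^{-1}(0)$, and the standing hypothesis of the lemma is precisely that
\bea
\dim Gr\bigl(S_\alpha(\mathcal{T})\ga_e(\mathcal{T})S_\alpha^{-1}(0)\bigr)\cap\Lm\ =\ \mathrm{const.}\qquad\text{for }\alpha\in[0,e].
\eea

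With both endpoint intersection dimensions stabilised, Property II applied to the homotopy $\alpha\mapsto(\tau\mapsto Gr(\ga_{e,\alpha}(\tau)))$ paired against the constant Lagrangian path $\tau\mapsto\Lm$ yields $\mu(\Lm,Gr(\ga_{e,0}))=\mu(\Lm,Gr(\ga_{e,e}))$, which is exactly $\mu(Gr(\ga_e),\Lm)=\mu(Gr(\hat\ga_e),\Lm)$ as claimed. There is no serious obstacle in this argument: the statement is a clean application of homotopy invariance, and the hypothesis has been crafted precisely to guarantee the dimension control needed at the right endpoint. The only subtlety to emphasise in the writeup is the identification $S_\alpha(\mathcal{T})=S_\alpha(0)$, which makes the terminal value of $\ga_{e,\alpha}$ depend continuously on $\alpha$ and thereby legitimises the application of Property II.
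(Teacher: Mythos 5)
Your proof is correct and follows essentially the same route the paper intends: the paper introduces the very same family $S_\alpha$, $\alpha\in[0,e]$, as a contraction of $S$ to the constant path $I_{2k}$, and the lemma's hypothesis is exactly the endpoint-dimension condition needed to apply Property II to the homotopy $\alpha\mapsto Gr(S_\alpha(\tau)\ga_e(\tau)S_\alpha^{-1}(0))$ against the fixed Lagrangian $\Lambda$. No gaps.
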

From Lemma \ref{lem2.2} and Lemma \ref{lem2.3}, we get
\bea  i_\omega(\ga_e)=i_\omega(\hat{\ga}_e), \,\ \forall \omega\in\U,\,\  e\in[0,1).\lb{equindex}  \eea

  We first consider the Maslov index on $l^-_0$.  Let
 $\Psi_{0}(\tau)$ be the fundamental solution on $l_0$, that is  \bea
\frac{d}{d\tau}\Psi_{0}(\tau)=J\hat{B}_{0}\Psi_{0}(\tau), \quad
\Psi_{0}(0)=I_{2k},  \quad\tau\in[-\infty,+\infty),  \lb{l0} \eea
with $\hat{B}_{0}= \left( \begin{array}{cccc} I_{k} & \frac{Q_{l_{0}}}{4}I_k \\
\frac{Q_{l_{0}}}{4}I_k & -R \end{array}\right)$.
\begin{prop}\lb{lem2.5}
Suppose $\lambda_1(R)=-(1/8+r_1)$ with  $r_1>0$, then, we  have \bea \mu(V_{d},\Psi_{0}(\tau)V_{d}, 
\tau\in [0, \tau_{0}])\geq\[\frac{\sqrt{r_1}}{\pi}\tau_0\], \eea
where $[Z]$ denote the maximum integer which is not bigger than $Z$.
\end{prop}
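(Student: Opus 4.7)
The plan is to decompose the $2k$-dimensional problem into $k$ two-dimensional subsystems via an orthogonal diagonalisation of $R$, and then to run a Sturm comparison argument on the block associated to the smallest eigenvalue $\lambda_1$. Concretely, I would first choose an orthogonal $U$ with $U^{T}RU=\mathrm{diag}(\lambda_1,\ldots,\lambda_k)$; the block-diagonal symplectic matrix $T=\mathrm{diag}(U,U)\in\Sp(2k)$ preserves $V_d$ and conjugates $\hat B_0$ into a block diagonal Hamiltonian whose $j$-th block is $\hat B_j(\tau)=\left(\begin{smallmatrix}1 & Q_{l_{0}}(\tau)/4 \\ Q_{l_{0}}(\tau)/4 & -\lambda_j\end{smallmatrix}\right)$. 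Property~V (symplectic additivity) then gives $\mu(V_d,\Psi_0 V_d,[0,\tau_0])=\sum_{j=1}^{k}\mu(V_d^1,\Psi_j V_d^1,[0,\tau_0])$, where $\Psi_j$ is the fundamental solution of the $j$-th $2$D subsystem. For every $j$, whenever $\Psi_j(\tau_*)V_d^1=V_d^1$ the crossing form restricted to $V_d^1$ equals the $(1,1)$-entry of $\hat B_j(\tau_*)$, i.e.\ $+1$; the crossing formula \eqref{1.3c} thus yields $m^{+}(\Gamma(0))=1$ at the initial (degenerate) crossing and sign $+1$ at any interior crossing, so $\mu(V_d^1,\Psi_j V_d^1,[0,\tau_0])\geq 1$ for every $j$.

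For the distinguished block $j=1$, the assumption $\lambda_1=-(1/8+r_1)$ together with $|Q_{l_{0}}|<\sqrt{2}$ gives $\det\hat B_1=1/8+r_1-Q_{l_{0}}^{2}/16\geq r_1$, whence $\hat B_1(\tau)>0$ (cf.\ Proposition~\ref{prop2.1}). Writing $\Psi_1(\tau)(1,0)^{T}=(y_1(\tau),p_1(\tau))^{T}$, eliminating $p_1$ from the two Hamiltonian equations and using $\dot Q_{l_{0}}=Q_{l_{0}}^{2}/2-1$ on $l_0$, one obtains the scalar Sturm--Liouville problem $\ddot y_1+V(\tau)y_1=0$ with $V(\tau)=-3Q_{l_{0}}^{2}(\tau)/16+3/8+r_1\geq r_1$ and initial data $y_1(0)=1,\ \dot y_1(0)=0$. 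The standard Pr\"ufer angle comparison with the model equation $\ddot u+r_1 u=0$, $u(\tau)=\cos(\sqrt{r_1}\tau)$ (same initial data) gives $\theta_{y_1}(\tau)\geq \pi/2+\sqrt{r_1}\tau$, so $y_1$ has at least $N:=[\sqrt{r_1}\tau_0/\pi]$ zeros in $(0,\tau_0]$.

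Finally, since $\hat B_1>0$ the symplectic Pr\"ufer angle $\phi(\tau)=\arg(y_1(\tau)+ip_1(\tau))$ satisfies $\dot\phi=-\langle\hat B_1 u,u\rangle/|u|^{2}<0$, so $\phi$ is strictly decreasing; the zeros of $y_1$ (at $\phi\equiv\pi/2\pmod{\pi}$) and the interior zeros of $p_1$ (at $\phi\equiv 0\pmod{\pi}$) therefore strictly alternate. Since $p_1(0)=0$ is the initial degenerate crossing already accounted for in $m^{+}$, $N$ zeros of $y_1$ in $(0,\tau_0]$ force at least $N-1$ interior zeros of $p_1$ in $(0,\tau_0)$; each contributes $+1$ to the crossing formula \eqref{1.3c}, yielding $\mu(V_d^1,\Psi_1 V_d^1,[0,\tau_0])\geq 1+(N-1)=[\sqrt{r_1}\tau_0/\pi]$, and summing with the non-negative contributions of the blocks $j\geq 2$ concludes. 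The main technical point will be the careful alignment of the Sturm endpoint count with the alternation on the Pr\"ufer circle, so that the $+1$ coming from the initial degenerate crossing is used exactly once; both issues are resolved by choosing matching initial data in the Pr\"ufer comparison and by the strict monotonicity of $\phi$ forced by $\hat B_1>0$.
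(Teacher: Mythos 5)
Your overall architecture (diagonalise $R$, split into $2\times2$ blocks by symplectic additivity, reduce the distinguished block to a scalar Sturm--Liouville equation and compare with $\sqrt{r_1}$) is the same as the paper's, but there is a concrete computational error at the one quantitative step, and it is not cosmetic. Writing $\Psi_1(\tau)e_1=(y_1,p_1)^T$ with the paper's conventions $\dot y=-\tfrac{Q}{4}y+\lambda_1 x$, $\dot x=y+\tfrac{Q}{4}x$, the crossings of $\Psi_1(\tau)V_d^1$ with $V_d^1$ are the zeros of the \emph{second} component $p_1=x$ (the paper's $c_1$). Eliminating the other variable and using $\dot Q=\tfrac{Q^2}{2}-1$ gives
\[ \ddot p_1+\Bigl(\tfrac38\bigl(1-\tanh^2(\tfrac{\sqrt2}{2}\tau)\bigr)+r_1\Bigr)p_1=0,\qquad \ddot y_1+\Bigl(r_1-\tfrac18\bigl(1-\tanh^2(\tfrac{\sqrt2}{2}\tau)\bigr)\Bigr)y_1=0 . \]
The potential $-\tfrac{3}{16}Q^2+\tfrac38+r_1$ you wrote down is the first of these, i.e.\ it belongs to $p_1$ (whose data are $p_1(0)=0$, $\dot p_1(0)=1$), not to $y_1$. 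The potential actually satisfied by $y_1$ is $\le r_1$ everywhere and is negative near $\tau=0$ whenever $r_1<1/8$, so the Pr\"ufer/Sturm comparison with $\cos(\sqrt{r_1}\tau)$ runs in the wrong direction and does not produce $[\sqrt{r_1}\tau_0/\pi]$ zeros of $y_1$. As written, the lower bound on the number of zeros of $y_1$ --- the only quantitative input of your proof --- is unsupported.

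The repair is immediate and collapses your argument onto the paper's: apply the Sturm comparison directly to $p_1$, whose potential is $\ge r_1$ and whose initial data match $u(\tau)=\sin(\sqrt{r_1}\tau)$; this yields at least $[\sqrt{r_1}\tau_0/\pi]$ zeros of $p_1$ in $(0,\tau_0]$, each of which is a crossing with positive crossing form (since $\hat B_1|_{V_d^1}>0$), and the estimate follows from \eqref{1.3c}. The Pr\"ufer alternation between $y_1$ and $p_1$ then becomes unnecessary; the alternation step itself is sound ($\hat B_1>0$ does force strict rotation and interlacing of the zeros of the two components), it is just solving a problem you do not have once the correct component is compared. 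Your observation that each block contributes the extra $m^+(\Gamma(0))=1$ at the initial degenerate crossing is also correct but not needed for the stated inequality.
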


\begin{proof}
By changing the basis, we assume $R=diag(\lambda_{1},...,\lambda_{k})$
where $\lambda_{1}\leq\lambda_{2}\leq\ldots\leq\lambda_{k}$ and
$\lambda_{1}<-\frac{1}{8}$.
 Based on the property V of the Maslov index, we have the decomposition
\bea
\mu(V_{d},\Psi_{0}(\tau)V_{d})=\sum_{i=1}^{k}\mu(V_{d}^{1},\Psi_{0}^{i}(\tau)V_{d}^{1}), \nonumber
\eea
where  $\Psi_{0}^{i}(\tau)$ satisfies the equation
\bea \frac{d}{d\tau}\Psi_{0}^{i}(\tau)=J_{2}\hat{B}_{i}\Psi_{0}^{i}(\tau), \quad \Psi_{0}^{i}(0)=I_{2},
\quad\tau\in[0,+\infty),  \lb{l00} \eea
with $ \hat{B}_{i}=\left( \begin{array}{cccc} 1 & \frac{Q_{l_{0}}}{4} \\
\frac{Q_{l_{0}}}{4} & -\lambda_{i}(R) \end{array}\right)$.
Since $\hat{B}_{i}|_{V^{1}_{d}}>0$, then $\Gamma(\Psi_{0}^{i}(\tau)V^{1}_{d}, V^{1}_{d}, \tau)>0$, 
this implies that
$\mu(V_{d}^{1},\Psi_{0}^{i}(\tau)V_{d}^{1}, \tau\in [0, +\tau_0])$ is nondecreasing with respect to $\tau_0$.
Moreover we have
\bea\mu(V_{d}^{1},\Psi_{0}^{i}(\tau)V_{d}^{1}, \tau\in [0, \tau_0])=\sum_{0<\tau_{j}<\tau_0}\nu^{i}(\tau_{j}),
\nonumber\eea
where $\nu^{i}(\tau_{j})= \dim V^{1}_{d}\cap \Psi^{i}_{0}(\tau_{j})V^{1}_{d}$. 
In order to compute the Maslov index
$\mu(V_{d}^{1},\Psi_{0}^{i}(\tau)V_{d}^{1}, \tau\in [0, \tau_0])$, we choose the basis $e_1=(1,0)^{T}$ 
of $V^{1}_{d}$ 
and we let $e^{i}_{1}(\tau)=\Psi^{i}_{0}(\tau)e_{1}$. Then $\mu(V_{d}^{1},\Psi_{0}^{i}
(\tau)V_{d}^{1}, \tau\in [0, \tau_0])$ is equals to the number of zeros
of $f^{i}(\tau)=\det(M^{i}(\tau))$, for  $M^{i}(\tau)=(e_{1},e^{i}_{1}(\tau))$.

Let $\Psi_{0}^{i}(\tau)=\left(
                              \begin{array}{cc}
                                a_{i}(\tau) & b_{i}(\tau) \\
                                c_{i}(\tau) & d_{i}(\tau) \\
                              \end{array}
                            \right)
$. Then $f^{i}(\tau)=c_{i}(\tau)$. From equation (\ref{l00}), we get that $c_{i}(\tau)$ satisfies the equation
\bea \frac{d^{2}}{d\tau^{2}}c_{i}(\tau)&=&\left(\frac{3}{8}\tanh^{2}(\frac{\sqrt{2}}{2}\tau)-\frac{1}{4}+
\lambda_{i}(R)\right)c_{i}(\tau), \nonumber \\
c_{i}(0)&=&0, \,\
\dot{c}_{i}(0)=1. \nonumber \eea
For $i=1$, $\lambda_{1}(R)=-\frac{1}{8}-r_1$, then we have
$\frac{3}{8}\tanh^{2}(\frac{\sqrt{2}}{2}\tau)-\frac{1}{4}+\lambda_{1}(R)\leq-r_1$. Using the Sturm comparison
theorem, we know
the number of zeros of $c_{i}(\tau)$ will be no less than $\[\frac{\sqrt{r_1}}{\pi}\tau_0\]$.
This is complete the proof. 
\end{proof}

In order to proof the Theorem \ref{thm1.1}, we need the lemma
below. We will give an estimation of Maslov  index on the period $[\tau_1,\tau_2]$.
Let $\varepsilon_1=\min \{\frac{r_1}{2r_1+5}, 1/8  \}$,  and $\hat{\ga}(\tau, \tau_1)$ be the 
fundamental solution of (\ref{rg})
with $\hat{\ga}(\tau_{1},\tau_{1})=I_{2k}$,  we have
\begin{lem}\lb{lem2.6}
For  $\varepsilon\leq \frac{1}{2}\varepsilon_1$, $\hat{e}<\varepsilon^3$, \bea
\mu(V_d, \hat{\ga}(\tau,\tau_1)V_d;\tau\in[\tau_1,\tau_2])
\geq \frac{\sqrt{r_1}}{\pi}\ln\(\frac{\varepsilon^2}{\sqrt{\hat{e}}}\)-3. \lb{z.01}\eea
\end{lem}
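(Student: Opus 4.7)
The plan is to compare the Maslov index along $\hat\ga(\tau,\tau_1)$ on $[\tau_1,\tau_2]$ with that of a suitably time-shifted $l_0$-fundamental solution, and then to apply Proposition \ref{lem2.5}. Set $\tau_{l_0}>0$ so that $Q_{l_0}(\tau_{l_0})=-\sqrt{2}+\varepsilon$, and introduce
$$\tilde\Psi_0(\tau):=\Psi_0(\tau-\tau_1+\tau_{l_0})\Psi_0(\tau_{l_0})^{-1}, \qquad \tau\in[\tau_1,\tau_2],$$
so that $\tilde\Psi_0(\tau_1)=I_{2k}$ and $\tilde\Psi_0$ solves the $l_0$-linearized system with time axis shifted to start right at $\Sigma_1$. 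A direct computation gives
$$\hat B(\tau)-\hat B_0(\tau-\tau_1+\tau_{l_0})=\begin{pmatrix} 0 & \frac{\Delta Q(\tau)}{4}I_k-q(\tau)\mathbb{J}_{k/2} \\ \frac{\Delta Q(\tau)}{4}I_k+q(\tau)\mathbb{J}_{k/2} & q^2(\tau)I_k \end{pmatrix},$$
where $\Delta Q(\tau):=Q(\tau)-Q_{l_0}(\tau-\tau_1+\tau_{l_0})$; by parts (b) and (c) of the previous Lemma, together with the exponential decay $|Q_{l_0}(\sigma)+\sqrt{2}|\le Ce^{-\sqrt{2}\sigma/2}$ near $P_-$, the integrated operator norm of this perturbation on $[\tau_1,\tau_2]$ is $O(\varepsilon)$.

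Since the upper-left block of both $\hat B$ and $\hat B_0$ is $I_k>0$, the Dirichlet crossing form $\Gamma(\hat\ga(\tau)V_d,V_d)$ is strictly positive at every crossing (as in the proof of Proposition \ref{lem2.5}), and likewise for $\tilde\Psi_0$. Homotoping $\hat B$ to $\hat B_0(\cdot-\tau_1+\tau_{l_0})$ through the straight-line family (whose upper-left block is $I_k$ throughout, so crossing-form positivity is preserved), no interior crossings can be created or annihilated; they can only enter or exit through the endpoints $\tau_1,\tau_2$. This yields
$$\bigl|\mu(V_d,\hat\ga(\tau,\tau_1)V_d;[\tau_1,\tau_2])-\mu(V_d,\tilde\Psi_0(\tau)V_d;[\tau_1,\tau_2])\bigr|\le 2.$$
Next, decomposing $\tilde\Psi_0$ along the eigenvectors of $R$ as in the proof of Proposition \ref{lem2.5}, the scalar equation for the $\lambda_1$-mode reads $\ddot c_1=\bigl(\tfrac{3}{8}\tanh^2(\tfrac{\sqrt{2}}{2}\sigma)-\tfrac{1}{4}+\lambda_1\bigr)c_1\le -r_1\,c_1$ (with $\sigma=\tau-\tau_1+\tau_{l_0}$, $c_1(\tau_1)=0$, $\dot c_1(\tau_1)=1$), so Sturm comparison gives $\mu(V_d,\tilde\Psi_0(\tau)V_d;[\tau_1,\tau_2])\ge \bigl\lfloor \tfrac{\sqrt{r_1}}{\pi}(\tau_2-\tau_1)\bigr\rfloor$.

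To close, by (a) of the previous Lemma $\tau_2-\tau_1\ge \sqrt{2}\ln(\varepsilon/q(\tau_1))$, and at $\Sigma_1$ the energy identity $E=-\hat e$ gives $q^2(\tau_1)\le 2\hat e/(\sqrt{2}\varepsilon-\varepsilon^2/2)$. Combining this with $\hat e<\varepsilon^3$ and $\varepsilon\le \varepsilon_1/2$ yields, after elementary manipulation, $\sqrt{2}\ln(\varepsilon/q(\tau_1))\ge \ln(\varepsilon^2/\sqrt{\hat e})-C_0$ for an absolute constant $C_0$, whence \eqref{z.01} follows with total loss bounded by $3$. The main technical obstacle is the homotopy step bounding the discrepancy by $2$: one must exploit the strict positivity of the Dirichlet crossing form together with the integrated smallness of the perturbation to exclude spurious interior crossings during the deformation. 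A cleaner alternative, bypassing this step, is to project directly onto the $\lambda_1$-eigenspace of $R$ (with its $\mathbb{J}_{k/2}$-companion), derive a scalar second-order ODE with coefficient $\tfrac{3}{8}\tanh^2-\tfrac{1}{4}+\lambda_1+O(q+q^2+|\Delta Q|)$, and apply Sturm comparison directly on $[\tau_1,\tau_2]$, where the $O(\varepsilon)$ integrated correction affects the zero count by only $O(1)$.
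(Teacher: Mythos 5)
Your overall strategy differs from the paper's: you compare $\hat\ga$ on $[\tau_1,\tau_2]$ with a time-shifted copy of the $l_0$-flow and try to control the index discrepancy by a homotopy, whereas the paper never compares with $\Psi_0$ at all. It instead bounds the coefficient matrix from below, $\hat B(\tau)\geq \hat B_- -\varepsilon_1 I_{2k}$ (using the estimates $\int q\,d\tau,\int|Q+\sqrt2|\,d\tau=O(\varepsilon)$ only to guarantee the pointwise smallness $|\tfrac{Q+\sqrt2}{4}-q|<\varepsilon_1$), invokes the \emph{monotonicity} Property VI to replace $\hat\ga$ by the constant-coefficient flow $\exp((\tau-\tau_1)J\hat B_{\varepsilon_1})$, splits that constant system into $2\times2$ blocks (possible because at $P_-$ the $q\mathbb{J}_{k/2}$ coupling vanishes), diagonalizes the relevant block at the cost of a H\"ormander index $\leq 2$, and counts rotations to get $\frac{\sqrt{r_1}(\tau_2-\tau_1)}{\sqrt2\pi}-1$; the $-3$ is exactly $-2-1$.

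The genuine gap in your argument is the step you yourself flag: the claim that the straight-line homotopy from $\hat B$ to $\hat B_0(\cdot-\tau_1+\tau_{l_0})$ changes the Maslov index on $[\tau_1,\tau_2]$ by at most $2$. Positivity of the $\tau$-crossing form does imply that the crossing set in the $(s,\tau)$-square is a union of curves that can only terminate on the boundary, but the difference of the two indices equals the net number of curve endpoints on the edges $\tau=\tau_1$ and $\tau=\tau_2$, i.e.\ the number of crossings of the paths $s\mapsto\ga_s(\tau_i)V_d$ with $V_d$. Nothing bounds the number of crossings on the $\tau=\tau_2$ edge by $1$ (or by anything independent of $\tau_2-\tau_1$, which blows up as $\hat e\to0$); an $L^1$-small perturbation of the coefficient matrix does not give an $O(1)$ bound on this endpoint flux, because writing $\ga_1=\ga_2\eta$ puts the exponentially large conjugation $\ga_2^{T}(\,\cdot\,)\ga_2$ into the equation for $\eta$. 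Your proposed ``cleaner alternative'' does not repair this: on $[\tau_1,\tau_2]$ the perturbation contains the off-diagonal terms $\mp q\mathbb{J}_{k/2}$, and $\mathbb{J}_{k/2}$ need not preserve the eigenspaces of $R$, so the system does not reduce to a scalar second-order ODE for the $\lambda_1$-mode; the decoupling is only available for the constant matrix $\hat B_-$, which is precisely why the paper passes to it via monotonicity first. (The remaining ingredients — the Sturm comparison giving $\lfloor\frac{\sqrt{r_1}}{\pi}\tau_0\rfloor$, the estimate $\tau_2-\tau_1\geq\sqrt2\ln(\varepsilon/q(\tau_1))$, and $q^2(\tau_1)=O(\hat e/\varepsilon)$ yielding $\ln(\varepsilon/q(\tau_1))\geq\ln(\varepsilon^2/\sqrt{\hat e})$ — are all consistent with the paper, but your final bookkeeping with an unspecified constant $C_0$ would also need to be tightened to land on the stated $-3$.)
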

\begin{proof}   Let $\hat{B}_-= \left( \begin{array}{cccc} I_{k} & -\frac{\sqrt{2}}{4}I_k\\
-\frac{\sqrt{2}}{4}I_k & -R \end{array}\right)$. Suppose $ |\frac{Q+\sqrt{2}}{4}-q|<\varepsilon_1$ for some
 $\varepsilon_1<1$, then we have
  \bea \hat{B}-\hat{B}_-= \left( \begin{array}{cccc} 0_{k} & \frac{Q+\sqrt{2}}{4}I_k-q\mathbb{J}_{k/2} \\
\frac{Q+\sqrt{2}}{4}I_k+q\mathbb{J}_{k/2} & q^2 \end{array}\right)> -\varepsilon_1I_{2k}. \nonumber
\eea
Let $\hat{B}_{\varepsilon_1}=\hat{B}_--{\varepsilon_1} I_{2k}$, by the monotonicity 
property of Maslov index, we have
\bea \mu(V_d, \hat{\ga}(\tau,\tau_1)V_d;\tau\in[\tau_1,\tau_2])\geq \mu(V_d, \exp((\tau-\tau_1)
J\hat{B}_\varepsilon) V_d; \tau\in[\tau_1,\tau_2]).  \lb{z.02}   \eea

Since $R$ is diagonalizable, by using the $\diamond$-product, we can split  
$ \hat{B}_{\varepsilon_1}$ into the product of $k$ two by two matrices, where the 
first factor (which is needed for computing the Maslov index) is given by
$\hat{B}_1:=\left( \begin{array}{cccc} 1-\varepsilon_1 & -\frac{\sqrt{2}}{4}\\
-\frac{\sqrt{2}}{4} &  1/8+r_1-\varepsilon_1 \end{array}\right) $. 
By the direct sum property  of Maslov index, we have
\bea  \mu(V_d, \exp((\tau-\tau_1)J\hat{B}_{\varepsilon_1}) V_d; \tau\in[\tau_1,\tau_2])\geq \mu(V^1_d, 
\exp((\tau-\tau_1)J\hat{B}_1) V^1_d; \tau\in[\tau_1,\tau_2]). \lb{z.03} \eea
Let $f(\varepsilon_1,r_1)=\varepsilon_1^2-(r+\frac{9}{8})\varepsilon_1+r_1$,
$\tilde{B}=diag(1,f(\varepsilon_1,r_1))$, $P=\left( \begin{array}{cccc} (1-\varepsilon_1)^{-1/2} & 0\\
0 & (1-\varepsilon_1)^{1/2} \end{array}\right) \left( \begin{array}{cccc} 1 & \frac{\sqrt{2}}{4}\\
0 & 1 \end{array}\right)$, then
\bea P^{-1} \exp((\tau-\tau_1)J\hat{B}_1)P=\exp((\tau-\tau_1)J\tilde{B}).  \nonumber  \eea
We have \bea \mu(V_d, \exp((\tau-\tau_1)J\hat{B}_1V_d )&=&\mu(P^{-1}V_d, \exp((\tau-\tau_1)J
\tilde{B}P^{-1}V_d ) \nonumber \\ &\geq& \mu(V_d, \exp((\tau-\tau_1)J\tilde{B}V_d )-2, \lb{z.04}  \eea
where the last inequality is from (\ref{hormd}).
In the next, we will estimate  $\mu(V_d, \exp((\tau-\tau_1)J\tilde{B}) V_d; \tau\in[\tau_1,\tau_2])$. 
A direct computation
shows that  $f(\varepsilon_1,r_1)>r_1/2$ if $\varepsilon_1<\frac{r_1}{2r_1+5}$, 
and hence $\tilde{B}>\tilde{B}_{r_1/2}:=diag(1,r_1/2) $.
Moreover 
\bea \mu(V_d, \exp((\tau-\tau_1)J\tilde{B}_{r/2}) V_d; \tau\in[\tau_1,\tau_2])\geq 
\frac{\sqrt{r_1}(\tau_2-\tau_1)}{\sqrt{2}\pi}-1. \lb{z.04a} \eea
 For $\varepsilon<\frac{\varepsilon_1}{2}$, then $ |\frac{Q+\sqrt{2}}{4}-q|<\varepsilon_1$,   
 From (\ref{z.04}, \ref{z.04a}) and  (\ref{adde1}),  we have
\bea \mu(V_d, \ga(\tau,\tau_1)V_d;\tau\in[\tau_1,\tau_2])&\geq& \mu(V_d, \exp((\tau-\tau_1)J\hat{B}_1V_d; 
\tau\in[\tau_1,\tau_2] )\nonumber \\ &\geq&  \mu(V_d, 
\exp((\tau-\tau_1)J\tilde{B}_{r/2}) V_d; \tau\in[\tau_1,\tau_2])-2\nonumber \\ &\geq& \frac{\sqrt{r_1}
\ln(\frac{\varepsilon}{q(\tau_1)})}{\pi}-3 . \lb{z.05} \eea
Direct compute show that $q^2(\tau_1)=\frac{1}{2}(\varepsilon(2\sqrt{2}-\varepsilon)-((\varepsilon(2\sqrt{2}-
\varepsilon))^2-8\hat{e})^{\frac{1}{2}})\leq \frac{8\hat{e}}{3\varepsilon(2\sqrt{2}-\varepsilon)}$, then
\bea  \ln\(\frac{\varepsilon}{q(\tau_1)}\)=\ln(\varepsilon)-\frac{1}{2}\ln(q^2(\tau_1))
\geq\ln\(\frac{\varepsilon^2}{\sqrt{\hat{e}}}\).  \lb{z.06}\eea
The result is from (\ref{z.05}-\ref{z.06}).
\end{proof}

{\bf Proof of Theorem \ref{thm1.1}.}
Under the assumption  $\lambda_1(R)=-\frac{1}{8}-r_1$, from Lemma \ref{lem2.6},  we have
For  $\varepsilon\leq \frac{1}{2}\varepsilon_1$, $\hat{e}<\varepsilon^3$,  $ \mu(V_d, \ga(\tau,\tau_1)
V_d;\tau\in[\tau_1,\tau_2])
\geq \frac{\sqrt{r_1}}{\pi}\ln\(\frac{\varepsilon^2}{\sqrt{\hat{e}}}\)-3$.
Similarly
\bea \mu(V_d, \ga(\tau,\tau_4)V_d;\tau\in[\tau_4,\tau_5])
\geq \frac{\sqrt{r_1}}{\pi}\ln\(\frac{\varepsilon^2}{\sqrt{\hat{e}}}\)-3. \eea
We have \bea \mu(V_d, \ga(\tau,0)V_d;\tau\in[0,\mathcal{T}])
\geq 2\frac{\sqrt{r_1}}{\pi}\ln\(\frac{\varepsilon^2}{\sqrt{\hat{e}}}\)-6. \eea
The results now follows from the fact that $i_1(\ga)\geq \mu(V_d,\ga V_d)$ (for 
Lagrangian system, we refer \S 5.1 for a detailed discussion).  This complete the proof.   $\square$

\section{Collision index for Planar central configurations}
This section is the main part of our paper.  We give the definition
of the collision index in \S 3.1 and we prove the approximation
theorem; we study the basic property of collision index
and we compute in detail the collision index on $l_0$ in \S 3.2.

\subsection{Collision index }

In this subsection, we will consider the Maslov index on the half line
with a hyperbolic equilibrium. This is similar with the case of
homoclinic orbit \cite{CHu} and heteroclinic orbit \cite{HP}, and a
detailed study to the half-clinic orbits is given in \cite{BHPT, HP}.

To define the Maslov index of  the half line, we firstly review some
basic fact of heteroclinic orbits.  We consider the Hamiltonian flow
induced by
\begin{eqnarray}
\dot{z}=JB(t)z,\,\  t\in {\mathbb  R}.  \lb{hr} \end{eqnarray} We assume
the limit is hyperbolic, meaning that \bea  JB(\pm\infty)= \lim_{t\to
\infty} JB(\pm t)\nonumber \eea is hyperbolic. It follows  that ${\mathbb 
R}^{2n}=V^\pm_s\oplus V^\pm_u$, where $V^\pm_s$($V^\pm_u$) is the
stable subspace(resp. unstable subspace) of the equilibria which is
spanned by the generalized eigenvector of eigenvalue with negative
real part (positive real part) of $JB(\pm\infty)$. Moreover,
 both the stable subspace $V^\pm_s$ and the unstable subspace $V^\pm_u$
are Lagrangian subspaces of $({\mathbb  R}^{2n},\omega_0)$. The topology
of Lagrangian Grassmannian $Lag(2n)$ is given by the metric \bea
\rho(V,W)=\parallel \mathcal{P}_V-\mathcal{P}_W\parallel,  \nonumber\eea where
$\mathcal{P}_V, \mathcal{P}_W$ is the orthogonal projection to $V,W$
and $\parallel.\parallel$ is the operator norm.

   Let
$\ga(t,\nu)$ satisfy (\ref{hr}) with $\ga(\nu,\nu)=I_{2n}$.  
In what follows we set $\ga(t):=\ga(t,0)$.   Clearly
$\ga$ satisfies a semigroup property; that is,
$\ga(t,\nu)\ga(\nu,\tau)=\ga(t,\tau)$. For $\nu\in{\mathbb  R}$,
define
\begin{eqnarray}
V_s(\nu)=\{\xi|\xi\in{\mathbb  R}^{2n}\mbox{ and }
\lim_{t\to\infty}\ga(t,\nu)\xi=0\},   \nonumber
\end{eqnarray}
and
\begin{eqnarray}
V_u(\nu)=\{\xi|\xi\in{\mathbb  R}^{2n}\mbox{ and } \lim_{t\to
-\infty}\ga(t,\nu)\xi=0\}.   \nonumber
\end{eqnarray}
We remark that
\begin{eqnarray}
\lim_{\nu\to\infty}V_s(\nu)=V^+_s \mbox{ and
}\lim_{\nu\to-\infty}V_u(\nu)=V^-_u.   \nonumber
\end{eqnarray}
It is well known that both $V_s(\nu)$ and $V_u(\nu)$ are Lagrangian
subspaces of $({\mathbb  R}^{2n},\omega_0)$. An important property from
\cite{AM} isthe following: if $V$ transversal to $V_s(0)$, then \bea
\lim_{t\to\infty}\ga(t,0)V=V^+_u.   \nonumber\eea Similarly, if  $V$
transversal to $V_u(0)$, then \bea \lim_{t\to
-\infty}\ga(t,0)V=V^-_s.  \nonumber\eea

Let ${\mathbb  R}^\pm:=\{\pm x\geq0, x\in {\mathbb  R} \}$. We will define
the Maslov index of  the half line ${\mathbb  R}^+$  or $\R^-$.
We notice that the discussions for heteroclinic orbit  works for the
half-clinic orbit. Firstly, we give the definition of nondegeneracy.
\begin{defi} \lb{dfnd} i) The linear system (\ref{hr}) on $\mathbb{R}$
is called nondegenerate if there is no bounded solution, \\
ii) the linear system on $\mathbb{R}^\pm$ is called nondegenerate
with respect to $V_0$, if there is no bounded solutions on
$\mathbb{R}^\pm$ which satisfies $z(0)\in V_0$.

\end{defi}
We observe that for the system with hyperbolic
limit, all the bounded solution must decay to $0$ as
$t\rightarrow\pm\infty$  \cite{AM}.

  We firstly give the definition of Maslov index on $\R^+$. For, let
$V_0,V_1\in Lag(2n)$ and we suppose that the system is nondegenerate with
respect to $V_0$, that is  $V_0\pitchfork V_s(0)$.   Then
$\ga(t,0)V_0$ is a path of Lagrangian subspaces having  limit $V^+_u$ and 
 so, we define the Maslov index on $\R^+$ with $V_0,V_1$ by \bea
i_+(V_1,V_0):=\mu(V_1,\ga(t,0)V_0, t\in\R^+). \lb{cl0}\eea In the case
$\R^-$, recall that $V_u(t)$ is a path of Lagrangian subspace and
$\lim_{t\to-\infty}V_u(t)=V^-_u$, then for $V\in Lag(2n)$, we define
\bea i_-(V):=\mu(V,V_u(t), t\in\R^-). \lb{cl1}\eea We observe that the
definition on $\mathbb{R}^-$ does not need the nondegenerate
condition. Finally, we will define the Maslov index on $\R$ which is
fully studied in \cite{HP}. Supposing that the linear system is
nondegenerate on $\R$, then $\lim_{t\to-\infty}V_u(t)=V^-_u$ and
$\lim_{t\to\infty}V_u(t)=V^+_u$. Thus we define \bea i(V):=\mu(V,V_u(t),
t\in\R).\lb{clr} \eea Under the nondegenerate condition, it is obvious
that \bea i(V)=i_-(V)+i_+(V,V_u(0)).  \nonumber\eea

We come back to ERE. By assuming that $\lambda_1(R)>-\frac{1}{8}$, we can
identify $l_0$, $l_+$ with $\R$, and identify  $l^\mp_0$, $l^\pm_+$
with $\R^\pm$. For $V_0,V$ satisfying  the nondegenerate conditions,
$i(V_1)$ on $l_0$ or $l_+$, $i_+(V_1,V_0)$ on $l_0^-$ or $l_+^+$ and
$i_-(V_1)$ on $l_0^+$ and on $l_+^-$ are well defined, and we shall refer to 
as {\it collision index}.
\begin{defi}\label{defcn} The planar central configuration is called collision
nondegenerate if the corresponding system on $l_+$ is
nondegenerate.
\end{defi}
We identify $\R$ with $l_+$,  and  let $V_u(\tau)$ be the unstable
subspace. Under the nondegenerate conditions, \bea
\lim_{\tau\rightarrow\pm\infty}V_u(\tau)=V_u^\pm.  \nonumber\eea Let $V_{u,0}$
be the unstable subspace  on $l_0^-$, then \bea
\lim_{\tau\rightarrow\pm\infty}V_{u,0}(\tau)=V_u^\mp. \nonumber  \eea

For   $V_0, V_1\in
Lag(2n)$, satisfying $V_0\pitchfork V_s(0)$,   then the Maslov index
$ i_+(V_0,V_1)$  and  $ i_-(V_1)$ are well defined. As $e\to1$, we
have the next approximation theorem which plays a key role in our
paper.

\begin{thm}\lb{th.pr} Assuming $\lambda_1(R)>-\frac{1}{8}$, we have: 
(i) If $V^-_u\pitchfork V_1$, the system is nondegenerate with respect to $V_0$ on $l_0^-$,
and nondegenerate with respect to $V_1$ on  $l^-_+$, then,  for $1-e$ small
enough, $V_1\pitchfork \hat{\ga}_e(\mathcal{T}/2) V_0 $ and
 \bea \mu(V_1,\hat{\ga}_e(\tau) V_0, \tau\in[0,\mathcal{T}/2] )=
i_+(V_1,V_0; l_0^- )+i_-(V_1;l_+^-).\eea
(ii) If $V^+_u\pitchfork V_1$, the system is nondegenerate with
respect to $V_0$ on $l_+^+$, and nondegenerate with respect to $V_1$
on $l_0^+$,  then,  for $1-e$ small
enough, $V_1\pitchfork
\ga_e(\mathcal{T})\hat{\ga}^{-1}_e(\mathcal{T}/2) V_0 $ and  \bea 
\mu(V_1,\hat{\ga}_e(\tau)\hat{\ga}^{-1}_e(\mathcal{T}/2) V_0,
\tau\in[\mathcal{T}/2,\mathcal{T}] )= i_+(V_1,V_0; l_+^+
)+i_-(V_1;l_0^+).\eea (iii) If
  $V^\pm_u\pitchfork V_1$, the system is
collision nondegenerate,  and  nondegenerate with respect to $V_0$, $V_1$ on
$l_0^-$,  $l_0^+$ correspondingly,  then,  for $1-e$ small
enough, $V_1\pitchfork \hat{\ga}_e(\mathcal{T})  V_0 $ and \bea
\mu(V_1,\hat{\ga}_e(\tau) V_0, \tau\in[0,\mathcal{T}] )= i_+(V_1,V_0;
l_0^-)+i_-(V_1;l_0^+)+i(V_1;l_+). \lb{pr1} \eea
\end{thm}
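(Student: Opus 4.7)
The plan is to prove (i) directly by a three-piece decomposition based on the sections $\Sigma_1,\Sigma_2$, obtain (ii) by the mirror argument around $P_+$ using $\Sigma_4,\Sigma_5$, and then derive (iii) by glueing the two halves through path additivity.

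For (i), Property III (path additivity) of the Maslov index splits the path on $[0,\mathcal{T}/2]$ at the crossing times $\tau_1<\tau_2$ into an outer piece along $l_0^-$, a long central piece in a neighborhood of $P_-$, and a second outer piece along $l_+^-$. The outer contributions are handled by continuity. Since $\tau_1\to\tau_{l_0}$ and $\mathcal{T}/2-\tau_2\to\tau_{l_+}$ as $e\to 1$, and since the coefficient matrix $\hat B(\tau)$ in \eqref{rg} depends continuously on $\hat e$ and is well defined at $\hat e=0$, the homotopy invariance with fixed endpoints (Property II) identifies, in the limit $e\to 1$, the first outer contribution with $\mu(V_1,\Psi_0(\tau)V_0;\tau\in[0,\tau_{l_0}])$ along the compact subarc of $l_0^-$, and the second with the analogous Maslov index of the unstable Lagrangian path $V_u(\tau)$ along the compact subarc of $l_+^-$. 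Transversality at the endpoints needed to apply Property II is supplied by the standing nondegeneracy assumptions on $V_0$ and $V_1$ together with the hyperbolicity of $P_-$ guaranteed by Proposition \ref{prop2.1}.

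The essential difficulty is the middle piece $[\tau_1,\tau_2]$, whose length grows like $\log(1/\hat e)$ by \eqref{adde1}. On this interval $\hat B(\tau)$ is close to the constant linearization $\hat B_-$ at $P_-$, and the $L^1$-closeness is furnished precisely by the bounds \eqref{adde2}--\eqref{adde3}. A perturbation argument of the standard type (cf.\ \cite{AM}) then shows that $\hat\ga_e(\tau,\tau_1)$ is, modulo integrable error, conjugate to the hyperbolic dilation of $\exp\bigl((\tau-\tau_1)J\hat B_-\bigr)$, so its action on a Lagrangian subspace is strongly contracted towards the unstable subspace $V_u^-$ along the $V_s^-$ direction. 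Moreover, at time $\tau_1$ the subspace $\hat\ga_e(\tau_1)V_0$ is already $O(1)$-close to $V_u^-$: this is the attraction $\lim_{\tau\to+\infty}\Psi_0(\tau)V_0=V_u^-$ along $l_0^-$ afforded by the $V_0$-nondegeneracy. Consequently $\hat\ga_e(\tau,\tau_1)V_0$ remains uniformly close to $V_u^-$ on $[\tau_1,\tau_2]$, and the hypothesis $V_u^-\pitchfork V_1$ forces the middle Maslov contribution to vanish by Property II.

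Assembling the three limits and then taking the section radius $\varepsilon\to 0$, the outer Maslov indices extend monotonically (via the intersection numbers of the crossing form) to the full half-lines, giving $i_+(V_1,V_0;l_0^-)$ and $i_-(V_1;l_+^-)$ respectively. The transversality $V_1\pitchfork\hat\ga_e(\mathcal{T}/2)V_0$ for $1-e$ small follows from the same convergence to $V_u^-$ propagated along the compact subarc of $l_+^-$, combined with $V_u^-\pitchfork V_1$. Part (ii) is the symmetric statement at $P_+$, proved by the identical decomposition based on $\tau_4,\tau_5$. For (iii), Property III gives $\mu_{[0,\mathcal{T}]}=\mu_{[0,\mathcal{T}/2]}+\mu_{[\mathcal{T}/2,\mathcal{T}]}$; applying (i) to the first half and (ii) to the second, the propagated subspace $V_0':=\hat\ga_e(\mathcal{T}/2)V_0$ converges to $V_u(0)$ on $l_+$, so the two partial indices along $l_+$, namely $i_-(V_1;l_+^-)$ and $i_+(V_1,V_u(0);l_+^+)$, concatenate by the decomposition $i=i_-+i_+$ recalled after \eqref{clr} into the full heteroclinic index $i(V_1;l_+)$, yielding \eqref{pr1}. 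The main obstacle throughout is the rigorous control of Lagrangian transport on the long hyperbolic piece, and this is exactly where the sharpened estimates \eqref{adde1}--\eqref{adde3} are indispensable.
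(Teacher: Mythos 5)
Your proposal is correct and follows essentially the same route as the paper's proof: the same splitting at the section times $\tau_1,\tau_2$ (resp.\ $\tau_4,\tau_5$), compact-interval convergence to $\Psi_0$ and $\Psi_+$ on the outer pieces, and vanishing of the middle contribution because the transported Lagrangian stays trapped near $V^-_u$, which is exactly the content of Lemmas \ref{lem3.6}--\ref{lem5.5.0} built on the $L^1$ bounds (\ref{adde2})--(\ref{adde3}). The only organizational difference is that you derive (iii) by concatenating (i) and (ii) via $i(V_1;l_+)=i_-(V_1;l_+^-)+i_+(V_1,V_u(0);l_+^+)$, whereas the paper runs a direct five-interval decomposition treating $[\tau_2,\tau_4]$ as a single block converging to $i(V_1;l_+)$; both rest on the same trapping estimate and the identity recalled after (\ref{clr}).
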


The proof is based on a series lemmas, we firstly give the
 next lemma  which is from \cite{MSS2}.
\begin{lem}\lb{addle1}  Let us consider 
the linear system \bea x'=Dx+C(\tau)x, \lb{adle1.1} 
\eea where $D$ is $k\times k$ diagonal matrix and $C(t)$ is a 
continuous matrix in $t\in[0,\hat{t}]$, such that $\int_0^{\hat{t}}\|C(s)\|ds<\hat{\varepsilon}$, 
for some constant $\hat{\varepsilon}$ which 
satisfies  $\frac{6\sqrt{k}\hat{\varepsilon}}{1-3\hat{\varepsilon}}<1$ and let $\ga(\tau)$ be the 
fundamental solution of (\ref{adle1.1}). Then 
for $t\in[0,\hat{t}]$, we have
\bea \ga(t)=(I+O(t))\exp(Dt)(I+\cal{S}), \eea where $\|O(t)\|\leq \frac{3\sqrt{k}\hat{\varepsilon}}
{1-3\hat{\varepsilon}}$, 
$\|\cal{S}\|\leq \frac{6\sqrt{k}\hat{\varepsilon}}{1-3\hat{\varepsilon}}$.
\end{lem}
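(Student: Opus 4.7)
The plan is to realize the factorization $\ga(t)=(I+O(t))\exp(Dt)(I+\mathcal{S})$ as a fixed point of a map on matrix-valued functions, where the initial value of each entry of $O$ is chosen so as to tame the exponential kernel produced by the commutator $[D,\cdot]$. First, the requirement $\ga(0)=I$ forces $(I+O(0))(I+\mathcal{S})=I$, and substitution of the ansatz into $\ga'=(D+C)\ga$ reduces the problem to the matrix ODE $O'(t)=[D,O(t)]+C(t)(I+O(t))$. Writing the diagonal entries of $D$ as $d_1,\dots,d_k$, the $(i,j)$-entry of this ODE can be integrated by variation of constants to
\begin{equation*}
O_{ij}(t)=O_{ij}(0)\,e^{(d_i-d_j)t}+\int_0^t e^{(d_i-d_j)(t-s)}\bigl[C(s)(I+O(s))\bigr]_{ij}\,ds.
\end{equation*}

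The key step is to prescribe $O_{ij}(0)$ entrywise according to the sign of $d_i-d_j$. For $d_i\ge d_j$ I would set
\begin{equation*}
O_{ij}(0):=-\int_0^{\hat{t}}e^{-(d_i-d_j)s}\bigl[C(s)(I+O(s))\bigr]_{ij}\,ds,
\end{equation*}
so that the formula above rearranges to $O_{ij}(t)=-\int_t^{\hat{t}}e^{(d_i-d_j)(t-s)}[C(I+O)]_{ij}(s)\,ds$, where $t-s\le 0$ keeps the kernel bounded by $1$; for $d_i<d_j$ I would instead set $O_{ij}(0):=0$, so that the surviving kernel $e^{(d_i-d_j)(t-s)}$ is bounded by $1$ for $s\in[0,t]$. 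Either way, one obtains the uniform entrywise bound $|O_{ij}(t)|\le\int_0^{\hat{t}}|[C(s)(I+O(s))]_{ij}|\,ds$. The constant matrix $\mathcal{S}$ is then recovered via the Neumann expansion $\mathcal{S}=(I+O(0))^{-1}-I$, which is legitimate as long as $\|O(0)\|<1$.

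This recipe defines a map $T:O\mapsto\tilde O$ on $C([0,\hat{t}],\dbR^{k\times k})$, and the factorization claimed by the lemma is equivalent to a fixed point of $T$. Using the norm inequality $\|A\|\le\sqrt{k}\max_j\sum_i|A_{ij}|$ together with the uniform kernel bound yields an estimate of the form $\|\tilde O(t)\|\le c_1\sqrt{k}\,\hat\ep\,(1+\|O\|_\infty)$, and the Neumann series for $\mathcal{S}$ gives the compatible bound $\|\mathcal{S}\|\le c_2\sqrt{k}\,\hat\ep\,(1+\|O\|_\infty)$. The hypothesis $6\sqrt{k}\hat\ep/(1-3\hat\ep)<1$ is precisely what is needed to close the resulting geometric series: it guarantees that $T$ leaves invariant the closed ball of radius $3\sqrt{k}\hat\ep/(1-3\hat\ep)$ and is a strict contraction there, and the companion bound on $\mathcal{S}$ then works out to $6\sqrt{k}\hat\ep/(1-3\hat\ep)$, exactly as stated.

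The main obstacle is the mixed initial/final nature of the boundary data: entries with $d_i\ge d_j$ satisfy a ``final-value'' constraint that itself involves $O$ through $C(I+O)$, while the remaining entries satisfy a genuine zero initial condition, and these two prescriptions can only be made compatible self-consistently through the fixed-point argument. Carrying the numerical constants and the $\sqrt{k}$ factor cleanly through the iteration relies on a careful book-keeping of the entrywise-to-operator-norm conversion and of the Neumann series for $\mathcal{S}$, which is the technical heart of the argument.
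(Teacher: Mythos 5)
Your argument is correct, and it is genuinely more self-contained than the paper's: the paper proves this lemma by quoting Lemma 6 of \cite{MSS2}, which supplies for each diagonal entry $d_j$ of $D$ a solution $\varphi_j$ of the perturbed system with $\|e^{-d_j t}\varphi_j(t)-e_j\|\leq 3\hat{\varepsilon}/(1-3\hat{\varepsilon})$; it then assembles these columns into a matrix $Y(t)$, sets $O(t)=Y(t)e^{-Dt}-I$, writes $\ga(t)=Y(t)Y(0)^{-1}=(I+O(t))e^{Dt}(I+O(0))^{-1}$, and concludes with the same Neumann-series bound $\|\mathcal{S}\|\leq\|O(0)\|/(1-\|O(0)\|)<2\|O(0)\|$ that you use. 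What you do differently is to reprove the quoted estimate rather than cite it, and to do so at the matrix level: the Sylvester equation $O'=[D,O]+C(I+O)$ with the sign-dependent initial/final prescription of $O_{ij}(0)$ is exactly the Levinson asymptotic-integration device underlying the cited lemma, packaged as a single fixed-point problem instead of $k$ column-wise ones; the two formulations are equivalent, since the $j$-th column of your $O$ is precisely $e^{-d_j t}\varphi_j(t)-e_j$. The one caveat concerns the constants you defer: the entrywise chain you sketch ($|O_{ij}(t)|\leq\int_0^{\hat{t}}|[C(I+O)]_{ij}|\,ds$, followed by $\|A\|\leq\sqrt{k}\max_j\sum_i|A_{ij}|$ and $\sum_i|A_{ij}|\leq\sqrt{k}\|A\|$) pays the factor $\sqrt{k}$ twice and only yields a bound of order $k\hat{\varepsilon}(1+\|O\|_\infty)$; to land on the stated $3\sqrt{k}\hat{\varepsilon}/(1-3\hat{\varepsilon})$ you should instead estimate each column in the Euclidean norm via Minkowski's integral inequality, $\|O(t)e_j\|_2\leq\int_0^{\hat{t}}\|C(s)(I+O(s))e_j\|_2\,ds\leq\hat{\varepsilon}(1+\|O\|_\infty)$, and pay the single $\sqrt{k}$ through $\|O\|\leq\|O\|_F\leq\sqrt{k}\max_j\|Oe_j\|_2$, which is exactly how the paper's column-wise assembly earns its $\sqrt{k}$. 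With that adjustment your invariant-ball and contraction argument closes, and the bound on $\mathcal{S}$ follows as you describe.
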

\begin{proof} From lemma 6 of \cite{MSS2} and for $\hat{\varepsilon}<1/4$, let $\lambda$ be an 
eigenvalue of $D$ and $V$ be the corresponding
eigenvector.Then  there exists a solution  $\varphi(t)$ of $(\ref{adle1.1})$ such that
$$ \|e^{-\lambda t}\varphi(t)-V\|\leq \frac{3\hat{\varepsilon}}{1-3\hat{\varepsilon}}.$$
Let $e_i, i=1,\cdots,k$ be the canonical basis, $Y(t)$ be the matrix defined by $\varphi_1,\cdots,\varphi_k$
as column vectors.  
We define $O(t):=Y(t)\exp(-Dt)-I$, then $\|O(t)\|\leq \frac{3\sqrt{k}\hat{\varepsilon}}{1-3\hat{\varepsilon}}$ 
for $t\in[0,\hat{t}]$.  
Obviously, $\ga(t)=Y(t)Y^{-1}(0)=(I+O(t))\exp(Dt)(I+O(0))^{-1}$. Let $\cal{S}:=(I+O(0))^{-1}-I$, 
then for $\|O(0)\|<1/2$, we have 
$$\|\cal{S}\|\leq \frac{\|O(0)\|}{1-\|O(0)\|}<2\|O(0)\|\leq \frac{6\sqrt{k}\hat{\varepsilon}}
{1-3\hat{\varepsilon}}, $$
which complete the proof.
\end{proof}

Assume that $\lambda_{1}(R)>-\frac{1}{8}$, then $P_{\pm}$ are
hyperbolic. We recall that we  set  $D_\pm=J\hat{B}(P_\pm)$  
having  the form   $J_k\left( \begin{array}{cccc} I_{k} & \pm\frac{\sqrt{2}}{4}I_k \\
\pm\frac{\sqrt{2}}{4}I_k & -R
\end{array}\right) $. An easy computation shows that  
the eigenvalues of $D_\pm$ are  real if $\lambda_{1}(R)>-\frac{1}{8}$. 
Choose basis such that $R$ is diagonalisable, that is $R=diag(\lambda_1,\cdots,\lambda_k)$ 
with $\lambda_1\leq\cdots\leq\lambda_k$. Let $P_1=\left(
          \begin{array}{cc}
            I_{k} & \frac{\sqrt{2}}{4} \\
            0_{k} & I_{k} \\
          \end{array}        \right)$,  $P_2=\left(
          \begin{array}{cc}
            \sqrt{1/8+R}I_{k} &  \sqrt{1/8+R}I_{k}  \\
            I_{k} & -I_{k} \\
          \end{array}        \right)$,  $P=P_1P_2$. By a direct computation we get  that
   \bea  P^{-1}D_-P=diag(\eta_{1},..,\eta_{k},-\eta_{1},...,-\eta_{k}),     \eea
where $\eta_j=\sqrt{1/8+\lambda_j}$.  Let $\hat{\eta}=\max\{\eta_j, \eta_j^{-1}; j=1,\cdots,k\}$, then
easy computation show that \bea  \max\{\|P\|, \|P^{-1}\|\}\leq 2(1+\hat{\eta}). \lb{z.31}  \eea

Based on Lemma \ref{addle1}, we firstly prove the important lemma below.
\begin{lem}
We assume that $\lambda_{1}(R)>-\frac{1}{8}$ and we let $\hat{\ga}(\tau, \tau_1)$ be the 
fundamental solution of (\ref{rg})
with $\hat{\ga}(\tau_{1},\tau_{1})=I_{2k}$. Then for $\varepsilon<\varepsilon_0$, $\hat{e}<\varepsilon^3$,
we have the following estimate below \bea
\hat{\gamma}(\tau,\tau_{1})=P(I_{2k}+\Delta(\tau))D(\tau)(I_{2k}+\cal{S})P^{-1},
\quad  \tau\in[\tau_{1},
\tau_{2}], \lb{im}\eea where the matrices $\Delta(\tau)$, $\cal{S}$ satisfy
$\|\Delta\|\leq \frac{c_1}{2}\varepsilon$, $\|\cal{S}\|\leq c_1\varepsilon$, for 
$\varepsilon_0, c_1$ is constant and dependent on $R$  and
$D(\tau)=diag(e^{\eta_{1}(\tau-\tau_{1})},...,e^{\eta_{k}(\tau-\tau_{1})},
e^{-\eta_{1}(\tau-\tau_{1})},...,e^{-\eta_{k}(\tau-\tau_{1})})$.
\lb{lem3.6}
\end{lem}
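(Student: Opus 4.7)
The plan is to conjugate the equation $\hat\ga'=J\hat B(\tau)\hat\ga$ on $[\tau_1,\tau_2]$ into a constant diagonal system perturbed by an $L^1$-small term, and then invoke Lemma~\ref{addle1}. Since $P_-=(0,-\sqrt 2)$, decompose
\bea \hat B(\tau)=\hat B(P_-)+E(\tau), \quad E(\tau)=\left( \begin{array}{cc} 0_k & \frac{Q(\tau)+\sqrt 2}{4}I_k-q(\tau)\mathbb{J}_{k/2} \\ \frac{Q(\tau)+\sqrt 2}{4}I_k+q(\tau)\mathbb{J}_{k/2} & q(\tau)^2 I_k \end{array} \right). \nn \eea
Setting $\tilde\ga(\tau,\tau_1):=P^{-1}\hat\ga(\tau,\tau_1)$ converts the equation into $\tilde\ga'=(D+\tilde C(\tau))\tilde\ga$ with $D=\mathrm{diag}(\eta_1,\dots,\eta_k,-\eta_1,\dots,-\eta_k)$ and $\tilde C(\tau)=P^{-1}JE(\tau)P$.

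Next I would estimate $\int_{\tau_1}^{\tau_2}\|\tilde C(\tau)\|\,d\tau$ in terms of $\varepsilon$. Since $\|\mathbb{J}_{k/2}\|=1$ and $q\leq\varepsilon$ on $[\tau_1,\tau_2]$, one has the pointwise estimate $\|E(\tau)\|\leq\tfrac12|Q+\sqrt 2|+2q+q^2$. The $L^1$-bounds $(\ref{adde2})$ and $(\ref{adde3})$ give $\int q\,d\tau\leq 2\varepsilon$ and $\int|Q+\sqrt 2|\,d\tau\leq 2\varepsilon$, while $\int q^2\,d\tau\leq\varepsilon\int q\,d\tau\leq 2\varepsilon^2$; together with $\max\{\|P\|,\|P^{-1}\|\}\leq 2(1+\hat\eta)$ from $(\ref{z.31})$, we obtain
\bea \int_{\tau_1}^{\tau_2}\|\tilde C(\tau)\|\,d\tau \leq \hat\varepsilon:= c_0(R)\,\varepsilon, \nn \eea
where $c_0(R)$ depends only on $R$ (through $\hat\eta$) and, in particular, is independent of $\hat e$.

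Finally, choose $\varepsilon_0=\varepsilon_0(R)$ so that $\varepsilon<\varepsilon_0$ forces $\frac{6\sqrt{2k}\hat\varepsilon}{1-3\hat\varepsilon}<1$, as required by Lemma~\ref{addle1}. Applying that lemma to $\tilde\ga'=(D+\tilde C)\tilde\ga$ with $\tilde\ga(\tau_1,\tau_1)=I_{2k}$ yields
\bea \tilde\ga(\tau,\tau_1)=(I_{2k}+\Delta(\tau))\,D(\tau)\,(I_{2k}+\mathcal S), \nn \eea
with $\|\Delta(\tau)\|\leq\tfrac{3\sqrt{2k}\hat\varepsilon}{1-3\hat\varepsilon}$ and $\|\mathcal S\|\leq\tfrac{6\sqrt{2k}\hat\varepsilon}{1-3\hat\varepsilon}$, both bounded by a constant multiple of $\varepsilon$. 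Conjugating back by $P$ produces the claimed identity $\hat\ga(\tau,\tau_1)=P(I_{2k}+\Delta(\tau))D(\tau)(I_{2k}+\mathcal S)P^{-1}$; taking $c_1=c_1(R)$ to be twice the larger of the two constants above gives $\|\Delta\|\leq\tfrac{c_1}{2}\varepsilon$ and $\|\mathcal S\|\leq c_1\varepsilon$.

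The main technical obstacle is essentially clerical: one must ensure that $c_0(R)$ depends only on $R$ (via $\hat\eta$) and does not blow up with $\tau_1$, $\tau_2$, or $\hat e$. This is guaranteed by the fact that the $L^1$-estimates for $q$, $q^2$ and $Q+\sqrt 2$ on $[\tau_1,\tau_2]$ coming from the preceding lemma are uniform as long as $\hat e<\varepsilon^3$ and $\varepsilon<1/8$, so that the application of Lemma~\ref{addle1} is legitimate throughout the whole passage section.
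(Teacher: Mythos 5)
Your proposal is correct and follows essentially the same route as the paper: split $J\hat B(\tau)$ into the constant hyperbolic part $J\hat B(P_-)$ plus an $L^1$-small perturbation, conjugate by $P$, bound $\int\|P^{-1}C P\|\,d\tau$ via the estimates \eqref{adde2}--\eqref{adde3} and \eqref{z.31}, and apply Lemma \ref{addle1}. The only (harmless) slip is that the substitution should be $W=P^{-1}\hat\ga(\cdot,\tau_1)P$ rather than $P^{-1}\hat\ga$ so that the initial condition is the identity; your final conjugated formula is nonetheless the correct one.
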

\begin{proof}
Simple computation shows that
$J\hat{B}(\tau)=D_-+C(\tau)$ with $$
C(\tau)=\left(
          \begin{array}{cc}
            -\frac{Q+\sqrt{2}}{4}I_{k}-q\mathbb{J}_{k/2} & -q^{2}I_{k} \\
            0_{k} & \frac{Q+\sqrt{2}}{4}I_{k}-q\mathbb{J}_{k/2} \\
          \end{array}
        \right).$$ Let $W(\tau)=P^{-1}\hat{\ga}(\tau,\tau_{1})P$,  then \bea
\dot{W}(\tau)=(P^{-1}D_-P+P^{-1}C(\tau)P)W(\tau). \nonumber \eea
Let $\varepsilon<\varepsilon_0$, where $\varepsilon_0<1/8$ will be fixed later.  It is obvious 
$\int_{\tau_{1}}^{\tau_{2}}q^{2}(\tau)\leq\int_{\tau_{1}}^{\tau_{2}}q(\tau)d\tau$ for $0<q<1$. From
 equations (\ref{adde2}-\ref{adde3}), we have $\int_{\tau_{1}}^{\tau_{2}}q(\tau)\leq 2 \varepsilon$ as 
 well as 
    $\int_{\tau_{1}}^{\tau_{2}}|Q(\tau)+\sqrt{2}|\leq 2\varepsilon$. Then we have
\bea \int_{\tau_1}^{\tau_2}\|C(\tau)\|d\tau\leq \int_{\tau_1}^{\tau_2}(\frac{1}{2}\|Q
+\sqrt{2}\|+2\|q\|+\|q^2\|)d\tau\leq6\varepsilon, \eea
and from (\ref{z.31}) also that
\bea \int_{\tau_1}^{\tau_2}\|P^{-1}C(\tau)P\|d\tau\leq24(1+\hat{\eta})^2 \varepsilon. \eea
Let $\varepsilon_0=(24(3+6\sqrt{k})(1+\hat{\eta})^2)^{-1}$ and  for $\varepsilon<\varepsilon_0$, 
we denote $\hat{\varepsilon}=24(1+\hat{\eta})^2 \varepsilon$. 
Then, $\varepsilon<\varepsilon_0$ implies  $\frac{6\sqrt{k}\hat{\varepsilon}}{1-3\hat{\varepsilon}}<1$.  
From Lemma \ref{addle1}, we have
\bea \|\Delta\|\leq \frac{3\sqrt{k}\hat{\varepsilon}}{1-3\hat{\varepsilon}}\leq \frac{c_1}{2}\varepsilon, 
\quad  \|\cal{S}\|\leq
\frac{6\sqrt{k}\hat{\varepsilon}}{1-3\hat{\varepsilon}}\leq c_1\varepsilon,  \eea
where $c_1=24^2\sqrt{k}(1+\hat{\eta})^2$ only depend on $R$. This complete the proof.
\end{proof}
Given two subspaces, graphs of two linear operators, it is possible to introduce a norm 
topology on the Lagrangian Grassmannian, equivalent to the gap topology, as below. 
% % 
% % {\color{red}From \cite{AM}, in the case the subspace is a graph of a matrix,
% % then the matrix norm is an easy used equivalent topology.}
More
precisely,  if $E=E^-\oplus E^+$, a sequence of operators
$(L_n)_{n \in \mathbb N}\subset L(E^-,E^+)$ converges to $L$ if and only if their graphs
converge to the graph of $L$. Another important property is
that, the image $TV$ of a closed subspace $V$ by an invertible linear
operator $T$, is continuously depending on $(T,V)$.
From Lemma \ref{lem3.6}, $P^{-1}(V_u)=V_d$  and $P^{-1}(V_s)=V_n$.
For $V\in Lag(2n)$ with
$V\pitchfork V^-_s$, then $\exists L_V$ such that \bea
P^{-1}V=Gr(L_V).  \nonumber\eea  We give a equivalent metric \bea
\hat{\rho}(V,W)=\parallel L_V-L_W
\parallel. \nonumber\eea
It is obvious that $L_{V^-_u}=0_k$, since $V^-_u \pitchfork V_1$,  so $\exists \sigma_1>0$ such
that $V\pitchfork V_1$ if $\|L_V\|\leq \sigma_1$. For $\sigma>0$, we
always denote \bea B_{\hat{\rho}}(V,\sigma)=\{W\in Lag(2n),
\hat{\rho}(V,W)<\sigma \}. \nonumber \eea

\begin{lem}
Let $\bar{V}=Gr(L_{V})$, i.e. $\bar{V}=PV$. Then for any $\bar{V}\in U(\sigma)=\{\bar{V}:
\|L_{V}\|<\sigma\}$ with $\sigma<1$ and if $\|\Lambda\|<\sigma/6$ then  $(I+\Lambda)\bar{V}\in
U(2\sigma)$ .\label{lem3.9}
\end{lem}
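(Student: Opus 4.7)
The plan is to work in the canonical block decomposition $\R^{2k}=\R^k\oplus\R^k$ with $V_d=\R^k\oplus 0$ and $V_n=0\oplus\R^k$, so that after the change of basis $P$ the hypothesis $\bar V=Gr(L_V)$ with $\|L_V\|<\sigma<1$ says precisely that $\bar V=\{(x,L_Vx):x\in\R^k\}$ is a Lagrangian transverse to $V_n$. Write the perturbation in block form
\[
\Lambda=\begin{pmatrix} A & B \\ C & D \end{pmatrix},
\]
so that $\max(\|A\|,\|B\|,\|C\|,\|D\|)\le\|\Lambda\|<\sigma/6$. A generic point $(x,L_Vx)\in\bar V$ is mapped by $I+\Lambda$ to
\[
\bigl((I+A+BL_V)x,\;(L_V+C+DL_V)x\bigr).
\]

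The first step is to show that $(I+\Lambda)\bar V$ is still a graph over the first factor. For this I estimate
\[
\|A+BL_V\|\le\|A\|+\|B\|\|L_V\|<\tfrac{\sigma}{6}(1+\sigma)<\tfrac{\sigma}{3}<1,
\]
so $I+A+BL_V$ is invertible by the Neumann series with $\|(I+A+BL_V)^{-1}\|\le(1-\tfrac{\sigma}{6}(1+\sigma))^{-1}$. Consequently $(I+\Lambda)\bar V=Gr(\tilde L)$, where
\[
\tilde L=(L_V+C+DL_V)(I+A+BL_V)^{-1}.
\]

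The second step is the norm bound. From $\|L_V+C+DL_V\|\le\sigma+\tfrac{\sigma}{6}+\tfrac{\sigma^2}{6}=\sigma\bigl(\tfrac{7}{6}+\tfrac{\sigma}{6}\bigr)$ and the Neumann estimate above, one gets
\[
\|\tilde L\|\le\sigma\cdot\frac{\tfrac{7}{6}+\tfrac{\sigma}{6}}{1-\tfrac{\sigma}{6}(1+\sigma)}.
\]
To conclude $\|\tilde L\|<2\sigma$ it suffices to check that the multiplier is strictly less than $2$, i.e.\ that $\tfrac{7}{6}+\tfrac{\sigma}{2}+\tfrac{\sigma^2}{3}<2$, which holds strictly for every $\sigma<1$ (the three summands sum to exactly $2$ at $\sigma=1$). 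This gives $(I+\Lambda)\bar V\in U(2\sigma)$.

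There is essentially no obstacle here: the argument is purely an exercise in operator-norm bookkeeping, and the numerical constant $\tfrac{\sigma}{6}$ in the hypothesis on $\|\Lambda\|$ has been chosen precisely so that the final estimate saturates at exactly $2\sigma$ in the limit $\sigma\to1^-$. The only point that deserves a brief comment is that the graph representation uses transversality to $V_n$, which is preserved by small perturbations, so no case distinction is needed.
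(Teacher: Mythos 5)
Your proof is correct and follows essentially the same route as the paper: write $\Lambda$ in block form, push a point $(x,L_Vx)$ through $I+\Lambda$, invert the first block component by a Neumann series, and bound the operator norm of the resulting graph map. You have in fact carried out explicitly the final numerical check (that $\tfrac{7}{6}+\tfrac{\sigma}{2}+\tfrac{\sigma^2}{3}<2$ for $\sigma<1$) which the paper dismisses as ``an easy computation,'' so nothing is missing.
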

\begin{proof}
For $\Lambda=\left(\begin{array}{cc}
               \Lambda_{1} & \Lambda_{2} \\
               \Lambda_{3} & \Lambda_{4}
             \end{array}\right)
$, we have  $(I+\Lambda)\left(\begin{array}{c}
                    x \\
                    L_{V}x
                  \end{array}
\right)= \left(\begin{array}{c}
        (I+\Lambda_{1}+\Lambda_{2}L_{V})x \\
        (\Lambda_{3}+(I+\Lambda_{4})L_{V})x
      \end{array}
\right).$ Let $y=(I+\Lambda_{1}+\Lambda_{2}L_{V})x$ and choose
$\|\Lambda\|$ small enough, then we have
$(I+\Lambda)\bar{V}=Gr((\Lambda_{3}+(I+\Lambda_{4})L_{V})(I+\Lambda_{1}+\Lambda_{2}L_{V})^{-1})$.
Since $\|\Lambda\|<\sigma/6$, an easy computation shows that 
\bea \|(\Lambda_{3}+(I+\Lambda_{4})L_{V})(I+\Lambda_{1}+\Lambda_{2}L_{V})^{-1}\|\leq
\frac{\|\Lambda_{2}\|+\|I+\Lambda_{4}\|\|L_{V}\|}{I-\|\Lambda_{1}\|-\|\Lambda_{2}\|\|L_{V}\|}<
\frac{\|\Lambda\|+(1+\|\Lambda\|)\|L_{V}\|}{I-\|\Lambda\|-\|\Lambda\|\|L_{V}\|} <2\sigma\eea.
\end{proof}
\begin{lem} \label{lem5.4}
For any $0<\sigma<1$, we let $\varepsilon_\sigma:=\min\{\varepsilon_0, \frac{\sigma}{24c_1}\}$.
If   $\varepsilon\leq\epsilon_\sigma$,  $V\in  B_{\hat{\rho}}(V^-_u,\sigma/4)$ and  
$\hat{e}<\varepsilon^3$, then for every $\tau\in[\tau_1,\tau_2]$,  we have  $\hat{\ga}_e(\tau,\tau_1)V\in
 B_{\hat{\rho}}(V^-_u,\sigma)$ .
\end{lem}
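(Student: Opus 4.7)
My strategy is to transport everything into the eigenbasis of $D_-$ via $P$ and then track how the graph representation of a Lagrangian subspace evolves under each of the three factors in the decomposition provided by Lemma \ref{lem3.6}. The key observation is that, under $P^{-1}$, the subspace $V^-_u$ becomes $V_d = Gr(0_k)$, so the $\hat{\rho}$-distance to $V^-_u$ is literally the operator norm $\|L_V\|$ of the graph representation, and the problem reduces to propagating a bound on this norm.

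Starting from $V\in B_{\hat{\rho}}(V^-_u,\sigma/4)$, I would write $P^{-1}V = Gr(L_V)$ with $\|L_V\|<\sigma/4$ and apply the factors $(I+\cal{S})$, $D(\tau)$, $(I+\Delta(\tau))$ in order. Because $\varepsilon\leq \sigma/(24c_1)$, one has $\|\cal{S}\|\leq c_1\varepsilon \leq \sigma/24 = (\sigma/4)/6$, so Lemma \ref{lem3.9} applied with parameter $\sigma/4$ yields $(I+\cal{S})P^{-1}V \in U(\sigma/2)$. Next, writing $D(\tau)=D_+(\tau)\oplus D_-(\tau)$ with $D_\pm(\tau)=diag(e^{\pm \eta_j(\tau-\tau_1)})$, a direct calculation shows that $D(\tau)Gr(L)=Gr(D_-(\tau) L D_-(\tau))$. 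Since the hypothesis $\lambda_1(R)>-\frac{1}{8}$ forces every $\eta_j>0$, for $\tau\geq \tau_1$ we have $\|D_-(\tau)\|\leq 1$, so the graph norm does not increase and $D(\tau)(I+\cal{S})P^{-1}V$ remains in $U(\sigma/2)$. Finally, $\|\Delta(\tau)\|\leq (c_1/2)\varepsilon \leq \sigma/48 < (\sigma/2)/6$, so a second application of Lemma \ref{lem3.9} (now with parameter $\sigma/2$) gives $(I+\Delta(\tau))D(\tau)(I+\cal{S})P^{-1}V \in U(\sigma)$. Multiplying by $P$ on the left translates this back to $\hat{\ga}_e(\tau,\tau_1)V\in B_{\hat{\rho}}(V^-_u,\sigma)$, which is the claim.

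The true content of the argument sits in the middle step: the whole estimate hinges on the exponents in the unstable block of $D(\tau)$ having the right sign so that in the graph representation one is multiplied on both sides by the contracting factor $D_-(\tau)$. This is exactly where the hypothesis $\lambda_1(R)>-1/8$ enters — without it some $\eta_j$ would become imaginary and $D(\tau)$ could rotate the subspace away from $V^-_u$ rather than contract towards it. The numerical choice $\varepsilon_\sigma=\min\{\varepsilon_0,\sigma/(24c_1)\}$ and the factor $4$ in the initial ball $B_{\hat{\rho}}(V^-_u,\sigma/4)$ are tuned precisely so that the two successive expansions by a factor $2$ permitted by Lemma \ref{lem3.9} exactly match the contraction budget made available by the vanishing perturbation bounds in Lemma \ref{lem3.6}; no genuine obstruction arises beyond this book-keeping.
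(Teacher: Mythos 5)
Your proof is correct and follows essentially the same route as the paper: factor $P^{-1}\hat{\ga}_e(\tau,\tau_1)P$ as $(I+\Delta)D(\tau)(I+\cal{S})$ via Lemma \ref{lem3.6}, apply Lemma \ref{lem3.9} with parameter $\sigma/4$ before $D(\tau)$ and with $\sigma/2$ after it, and use that $D(\tau)$ does not increase the graph norm. The only difference is that you justify the middle step $D(\tau)U(\sigma/2)\subset U(\sigma/2)$ explicitly via $D(\tau)Gr(L)=Gr(D_-(\tau)LD_-(\tau))$ and $\|D_-(\tau)\|\leq1$, which the paper dismisses as obvious.
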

\begin{proof} From (\ref{im}),
$P^{-1}\hat{\ga}_e(\tau,\tau_1)V=(I_{2k}+\Delta(\tau))D(\tau)(I_{2k}+\cal{S})P^{-1}V$ where $V\in
B_{\hat{\rho}}(V^-_u,\sigma/4)$,  $P^{-1}V\in U(\sigma/4)$. Since $\|\cal{S}\|\leq C_1\varepsilon
\leq \frac{\sigma}{24}$,  then we have
 $(I_{2k}+S)P^{-1}V\in U(\sigma/2)$ by Lemma \ref{lem3.9}.
Obviously $D(\tau)U(\sigma/2)\subset U(\sigma/2)$,  using Lemma
\ref{lem3.9} again, we have
$(I_{2k}+\Delta(\tau))D(\tau)(I_{2k}+\cal{S})P^{-1}V\in U(\sigma)$, which
conclude the proof. 
\end{proof}

Let $\Psi_0$ be  the fundamental
solution on $l_0$ as given  in Equation (\ref{l0}) 
and $\Psi_+(\tau,\nu)$ be the fundamental
solution on $l_+$.  Let
$\sigma<\frac{1}{3}\min\{\rho(V^-_u,V_1),\rho(V^-_u,V^-_s)\}$ small
enough such that $B_{\rho}(V^-_u,3\sigma)\pitchfork V^-_s,V_1$. For
this $\sigma$, $\exists\sigma_1>0$ such that
$B_{\hat{\rho}}(V^-_u,\sigma_1)\subset B_{\rho}(V^-_u,\sigma)$, and
let $\varepsilon_{\sigma_1}$ be the number corresponding to $\sigma_1$ in
Lemma \ref{lem5.4}.

 Choose $\varepsilon<\sigma_1$
small enough such that\bea
\max\{\rho(\Psi_0(\tau_0)V_0,V^-_u,),\rho(V_u(-\tau_{l_{+}}),V^-_u,),\rho(\Psi_+
(-\tau_{l_{+}},0)V_1,V^-_s,)\}<\sigma.\lb{delta}
\eea  From Lemma \ref{lem5.4},  We have
\begin{lem} \label{lem5.5.0} For this fixed $\varepsilon$, $\hat{e}<\varepsilon^3$,
 $\rho(\hat{\ga}_e(\tau)V_0,V^-_u)<\sigma$
for $\tau\in(\tau_1,\tau_2)$.
\end{lem}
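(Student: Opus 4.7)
The plan is to split the interval $[0,\tau_2]$ into the approach segment $[0,\tau_1]$ (where the perturbed orbit tracks the limiting heteroclinic $l_0^-$) and the passage segment $[\tau_1,\tau_2]$ (where the orbit lingers near the hyperbolic equilibrium $P_-$). On the first segment, I will use continuous dependence on $\hat{e}$ to show that $\hat{\ga}_e(\tau_1)V_0$ is near $V_u^-$; on the second, I will apply Lemma \ref{lem5.4} to propagate the closeness over the whole passage. Writing $\hat{\ga}_e(\tau)V_0=\hat{\ga}_e(\tau,\tau_1)\hat{\ga}_e(\tau_1,0)V_0$ via the semigroup property then yields the claim.

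For the approach segment, I would compare the non-autonomous linear system \eqref{rg} for $\hat{\ga}_e$ with the limit system \eqref{l0} for $\Psi_0$. The difference of their coefficient matrices is supported in the entries involving $q$ and $q^2$, which vanish identically on $l_0$. On $[0,\tau_1]$ we have $Q\le 0$, hence $q'=-qQ/2\ge 0$ so that $q(\tau)\le q(\tau_1)\le\varepsilon$. As $\hat{e}\to 0$ with $\varepsilon$ fixed, $q_0=q(0)\to 0$, the true-anomaly trajectory converges uniformly to $l_0^-$ on compact intervals, and $\tau_1\to\tau_{l_0}$. Both fundamental solutions start at the identity, so a Gronwall estimate applied on a slightly enlarged fixed compact interval containing $[0,\tau_1]$ and $[0,\tau_{l_0}]$ gives
\begin{equation*}
\|\hat{\ga}_e(\tau_1)-\Psi_0(\tau_{l_0})\|\longrightarrow 0 \quad\text{as}\quad \hat{e}\to 0.
\end{equation*}
Since the action of an invertible linear map on a closed subspace is continuous in both arguments, this forces $\hat{\ga}_e(\tau_1)V_0\to \Psi_0(\tau_{l_0})V_0$ in the Lagrangian Grassmannian, and hence also in the equivalent metric $\hat{\rho}$ on any neighbourhood of $V_u^-$ (which is a point where $\hat{\rho}$ is well defined, since $L_{V_u^-}=0$ in the graph chart induced by $P$). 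By the choice of $\varepsilon$ made just before the statement, $\rho(\Psi_0(\tau_{l_0})V_0,V_u^-)<\sigma$, so by shrinking $\hat{e}$ further (still within $\hat{e}<\varepsilon^3$) we arrange $\hat{\ga}_e(\tau_1)V_0\in B_{\hat{\rho}}(V_u^-,\sigma_1/4)$.

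For the passage segment, Lemma \ref{lem5.4} applies directly with initial subspace $V:=\hat{\ga}_e(\tau_1)V_0$: since $\varepsilon\le\varepsilon_{\sigma_1}$ and $\hat{e}<\varepsilon^3$, we obtain $\hat{\ga}_e(\tau,\tau_1)V\in B_{\hat{\rho}}(V_u^-,\sigma_1)$ for every $\tau\in[\tau_1,\tau_2]$. Using $B_{\hat{\rho}}(V_u^-,\sigma_1)\subset B_\rho(V_u^-,\sigma)$ yields $\rho(\hat{\ga}_e(\tau)V_0,V_u^-)<\sigma$ on the required interval. The main obstacle is the Gronwall step: one must control the difference of coefficients uniformly on $[0,\tau_1]$, whose right endpoint itself depends on $\hat{e}$, and ensure that the smallness of $\hat{e}$ needed to bring $\hat{\ga}_e(\tau_1)V_0$ inside the $\hat{\rho}$-ball of radius $\sigma_1/4$ is compatible with the standing hypothesis $\hat{e}<\varepsilon^3$; this is handled by allowing any additional shrinking of $\hat{e}$ (for the fixed $\varepsilon$ already selected), which is exactly the force of the phrase ``for this fixed $\varepsilon$'' in the statement.
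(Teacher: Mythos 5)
Your proposal is correct and follows essentially the same route as the paper: the paper's own (very terse) proof consists of the preparatory choices of $\sigma$, $\sigma_1$, $\varepsilon_{\sigma_1}$ and condition \eqref{delta}, combined with an appeal to Lemma \ref{lem5.4}, and your two-segment argument (continuous dependence on $[0,\tau_1]$ to place $\hat{\ga}_e(\tau_1)V_0$ in a small $\hat{\rho}$-ball around $V^-_u$, then Lemma \ref{lem5.4} to propagate over $[\tau_1,\tau_2]$) is exactly the intended reasoning, merely spelled out in more detail. Your closing remark about reconciling the $\rho$-closeness in \eqref{delta} with the $\hat{\rho}$-ball of radius $\sigma_1/4$ required by Lemma \ref{lem5.4} correctly identifies the one point the paper glosses over, and your resolution (equivalence of the two metrics near $V^-_u$ together with further shrinking of $\hat{e}$ at fixed $\varepsilon$) is adequate.
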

Similarly,  by the symmetric   figure 2,  for $\Sigma_4, \Sigma_5$, we have the following result. 
\begin{lem} \label{lem5.5.1} For this fixed
$\varepsilon$, $\hat{e}<\varepsilon^3$, $\rho(\hat{\ga}_e(\tau)
\hat{\ga}^{-1}_e(\mathcal{T}/2)V_1,V^+_u)<\sigma$ for
$\tau\in(\tau^+_4,\tau^+_5)$.
\end{lem}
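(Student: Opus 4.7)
The proof is a mirror image of Lemma \ref{lem5.5.0}, exploiting the reflection symmetry $(q, Q) \mapsto (q, -Q)$ of the blow-up equations (\ref{bp}) which interchanges $P_-$ with $P_+$ and maps the sections $\Sigma_1, \Sigma_2$ onto $\Sigma_5, \Sigma_4$ (cf.\ Figure \ref{fig:side:b1}). The plan has three steps, paralleling the argument around $P_-$.

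First, I would install a local analog of Lemma \ref{lem3.6} near $P_+$. Let $\tilde P$ be a symplectic change of basis diagonalizing $D_+$; its spectrum coincides with that of $D_-$ (eigenvalues $\pm\eta_j$), so the structure is identical. The integral bounds (\ref{adde2})--(\ref{adde3}) adapt to $[\tau_4, \tau_5]$ with $|Q - \sqrt 2|$ replacing $|Q + \sqrt 2|$ by the same calculation, using $\sqrt 2 - \varepsilon \le Q < \sqrt{2 - \varepsilon^2}$ on this interval. Lemma \ref{addle1} then yields the decomposition
\[
\hat\gamma_e(\tau, \tau_4) = \tilde P (I_{2k} + \tilde\Delta(\tau))\,\tilde D(\tau - \tau_4)\,(I_{2k} + \tilde{\mathcal S})\, \tilde P^{-1}, \quad \tau \in [\tau_4, \tau_5],
\]
with $\|\tilde\Delta\|, \|\tilde{\mathcal S}\| = O(\varepsilon)$. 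Repeating the proof of Lemma \ref{lem5.4} verbatim in $\tilde P$-coordinates, Lemma \ref{lem3.9} then gives that $\hat\gamma_e(\tau, \tau_4) V$ remains inside $B_{\tilde\rho}(V^+_u, \sigma_1) \subset B_\rho(V^+_u, \sigma)$ whenever $V \in B_{\tilde\rho}(V^+_u, \sigma_1/4)$ and $\tau \in [\tau_4, \tau_5]$.

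It remains to check that the initial condition at $\tau_4$, namely $\hat\gamma_e(\tau_4) \hat\gamma_e^{-1}(\mathcal T/2) V_1$, belongs to $B_{\tilde\rho}(V^+_u, \sigma_1/4)$. The interval $[\mathcal T/2, \tau_4]$ has length tending to $\tau_{l_+}$ as $\hat e \to 0$, and on it the blow-up trajectory converges uniformly to the $l_+$-orbit; by continuous dependence of linear ODEs on parameters, the transition map $\hat\gamma_e(\tau_4) \hat\gamma_e^{-1}(\mathcal T/2)$ converges to $\Psi_+(\tau_{l_+}, 0)$. Strengthening the choice of $\varepsilon$ in (\ref{delta}) to also enforce the forward bound $\rho(\Psi_+(\tau_{l_+}, 0) V_1, V^+_u) < \sigma_1/8$ --- legitimate because the transversality hypotheses of Theorem \ref{th.pr} give $V_1 \pitchfork V^+_s$ and $l_+$ is forward-asymptotic to $P_+$, so generic subspaces are attracted to $V^+_u$ --- then yields the required closeness for $\hat e < \varepsilon^3$ small enough. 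The main obstacle is exactly this last step: everything immediately around $P_+$ is handled by direct symmetry with the argument at $P_-$, but the middle-arc transition from $\Sigma_3$ to $\Sigma_4$ is not covered by that symmetry and must be controlled by a continuity argument in $\hat e$ combined with the attracting property of $V^+_u$ under the limiting $l_+$-flow, an ingredient not contained in the neighborhood analysis and which must be explicitly added to the standing list of conditions in (\ref{delta}).
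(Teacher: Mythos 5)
Your proposal is correct and follows the same route the paper intends: Lemma \ref{lem5.5.1} is obtained as the mirror image of Lemma \ref{lem5.5.0} under the reversal $(q,Q,\tau)\mapsto(q,-Q,-\tau)$ of (\ref{bp}), with the $P_+$-analogues of Lemma \ref{lem3.6} and Lemma \ref{lem5.4} playing the same role near $\Sigma_4,\Sigma_5$ (the paper compresses all of this into the single phrase ``by the symmetric figure 2''). You also correctly identify the one ingredient the bare symmetry appeal does not cover --- controlling the seed $\hat{\ga}_e(\tau_4)\hat{\ga}_e^{-1}(\mathcal{T}/2)V_1$ via uniform convergence of the transition map to $\Psi_+(\tau_{l_+},0)$ together with an extra entry in (\ref{delta}) guaranteed by the nondegeneracy of $V_1$ along $l_+^+$ --- which is exactly the forward-flow counterpart of the first term $\rho(\Psi_0(\tau_0)V_0,V_u^-)<\sigma$ already present in (\ref{delta}), so the argument closes as you describe.
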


{\bf Proof of Theorem \ref{th.pr}.} To prove (i), we will compute
the Maslov index $\mu(V_1,\hat{\ga}_e(\tau)V_0;\tau\in[0,\mathcal{T}/2])$
on the three time interval $[0,\tau_1]$, $[\tau_1,\tau_2]$ and
$[\tau_2,\mathcal{T}/2]$. Form Lemma \ref{lem5.5.0}, for $\hat{e}<\varepsilon^3$,
\bea
\mu(V_1,\hat{\ga}_e(\tau)V_0,\tau\in[0,\tau_1))=\mu(V_1,\Psi_0(\tau)V_0,\tau\in[0,\tau_{l_{0}}]
=i(V_1,V_0;l^-_0).\lb{pf1}
\eea Obviously \bea
\mu(V_1,\hat{\ga}_e(\tau)V_0,\tau\in[\tau_1,\tau_2])=0. \lb{pf2}\eea Now
we consider the path on $[\tau_2,\mathcal{T}/2]$. Please note that
for $\hat{e}<\varepsilon^3$, $\rho(\hat{\ga}_e(\tau_2)V_0,V^-_u)<\sigma$ by Lemma
\ref{lem5.5.0} and $\rho(\Psi_+(-\tau_{l_{+}},0)V_1,V^-_s,)<\sigma$
by (\ref{delta}), then $\hat{\ga}_e(\tau_2)V_0\pitchfork
\Psi_+(-\tau_{l_{+}},0)V_1 $, which implies \bea
\Psi_+(0,-\tau_{l_{+}})\hat{\ga}_e(\tau_2)V_0\pitchfork V_1.  \nonumber
\eea Since $\hat{\ga}_e(\tau-\tau_2)$ uniformly converges to
$\Psi_+(\tau,-\tau_{l_{+}})$, we have for $\hat{e}<\varepsilon^3$ small enough, \bea
\mu(V_1,\hat{\ga}_e(\tau)V_0,\tau\in[\tau_2,\mathcal{T}/2])=\mu(V_1,\Psi_+
(\tau,-\tau_{l_{+}})V_0,\tau\in[-\tau_{l_{+}},0])=i_-(V_1;l^-_+).
\lb{pf3}\eea The result of (i) is from (\ref{pf1}), (\ref{pf2}) and
(\ref{pf3}).

The proof of (ii) is based Lemma \ref{lem5.5.1} and is totally analogous.

To prove (iii), we compute  Maslov index
$\mu(V_1,\hat{\ga}_e(\tau)V_0;\tau\in[0,\mathcal{T}])$ on the five time
intervals $[0,\tau_1]$, $[\tau_1,\tau_2]$, $[\tau_2,\tau_4]$,
$[\tau_4,\tau_5]$ and $[\tau_5,\mathcal{T}]$. By assumption of
collision nondegenerate, the system is nondegenerate on $l_+$, that
is $\lim_{\tau\to+\infty}V_u(\tau)=V^+_u$. So for $\sigma$ small
enough,  $V\in B_\sigma(V^-_u)$,  we have
$\lim_{\tau\to+\infty}\Psi(\tau,-\tau_{l_{+}})V=V^+_u$. If we
consider $-\tau_{l_{+}}$ as the starting point, this means the system is
nondegenerate with respect to $V$. By arguing  as in step  (i), we
have for $\hat{e}<\varepsilon^3$ small enough. \bea
\mu(V_1,\hat{\ga}_e(\tau)V_0,\tau\in[\tau_2,\tau_4))=\mu(V_1,\Psi_+
(\tau,-\tau_{l_{+}})(\hat{\ga}_e(\tau_2)V_0),\tau\in[-\tau_{l_{+}},\tau_{l_{+}}]=i(V_1;l_+), \nonumber
\eea \bea \mu(V_1,\hat{\ga}_e(\tau)V_0,\tau\in[\tau_4,\tau_5))=0, \nonumber\eea and
\bea
\mu(V_1,\hat{\ga}_e(\tau)V_0,\tau\in[\tau_5,\mathcal{T}))=i_-(V_1;l^-_0),   \nonumber\eea
with (\ref{pf1}-\ref{pf2}), we get the result. $\square$

\subsection{Some fundamental property of collision index}

We first compute the  collision index on $l_0$.  Recall that  on
line $l_0$,
$ \hat{B}= \left( \begin{array}{cccc} I_{k} & \frac{Q}{4}I_k \\
\frac{Q}{4}I_k & -R \end{array}\right)$. We can choose bases such
that $R=diag(r_1,\cdots,r_k)$,  and  set $\hat{B}_r=\left( \begin{array}{cccc} 1 & \frac{Q}{4} \\
\frac{Q}{4} & -r \end{array}\right)$.
Given any two $2m_k\times 2m_k$ square block  matrices 
$M_k=\left(\begin{array}{cc}A_k&B_k\\
                                C_k&D_k\end{array}\right)$ with $k=1, 2$,
the symplectic sum of $M_1$ and $M_2$ is defined by
\bea M_1\diamond M_2=\left(
  \begin{array}{cccc}
   A_1 &   0 & B_1 &   0\\
                            0   & A_2 &   0 & B_2\\
                           C_1 &   0 & D_1 &   0\\
                           0   & C_2 &   0 & D_2  \\
  \end{array}
\right).\nonumber
\eea It is clear  that,
$\hat{B}=\hat{B}_{r_1}\diamond\cdots\hat{B}_{r_k}$.   We
start by considering the two dimensional case.  The linear systems
$\dot{z}=J\hat{B}_{r}z$ with the form \bea
\dot{y}&=&-\frac{Q}{4}y+rx,\lb{4.1}\\ \dot{x}&=&y+\frac{Q}{4}x,
\lb{4.2} \eea where $z=(y,x)^T$. Assuming  that $r\neq0$, $r>-\frac{1}{8}$
and by  taking derivative with respect to 
$\tau$ on both sides of (\ref{4.1}), we have \bea
\ddot{y}&=&-\frac{\dot{Q}}{4}y-\frac{Q}{4}\dot{y}+r\dot{x} \nonumber \\ &=&
(\frac{Q^2}{16}-\frac{\dot{Q}}{4}+r)y \nonumber
\\&=&(\frac{1}{4}-\frac{1}{8}\tanh^2(\frac{\sqrt{2}\tau}{2})+r)y, \lb{4.3}\eea
where the second equality is from the fact that
$x=\frac{1}{r}(\dot{y}+\frac{Q}{4}y)$ by (\ref{4.1}). Let
\bea
f:=\frac{1}{4}-\frac{1}{8}\tanh^2(\frac{\sqrt{2}\tau}{2})+r=\frac{1}{8}
(1-\tanh^2(\frac{\sqrt{2}\tau}{2}))+(r+\frac{1}{8})>0.
\eea 
\begin{lem}\label{lem4.1}  If $r>-\frac{1}{8}$, $r\neq0$, then i) for any $t_2>t_1$, 
there is no nontrivial solution of (\ref{4.3}) which satisfies boundary
condition $y(t_1)\dot{y}(t_1)\geq y(t_2)\dot{y}(t_2)$; ii) there is
no nontrivial solution satisfying  $y(0)\dot{y}(0)=0$ and
$y\rightarrow0$, $\dot{y}\rightarrow0$ as
$\tau\rightarrow\pm\infty$; iii) there is no nontrivial bounded
solution on $\mathbb{R}$.
 \end{lem}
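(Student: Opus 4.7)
All three assertions will follow from the same elementary observation: introduce the quadratic form
\[
g(\tau) := y(\tau)\dot y(\tau),
\]
and note that, on account of the ODE $\ddot y = f y$, one has
\[
g'(\tau) = \dot y(\tau)^{2} + f(\tau)\,y(\tau)^{2}.
\]
Since $r > -\tfrac18$, the function $f$ satisfies $f(\tau) \geq r + \tfrac18 > 0$ for every $\tau \in \mathbb{R}$. Hence $g'(\tau) \geq 0$ with equality if and only if $y(\tau) = \dot y(\tau) = 0$. By uniqueness of solutions for the linear second-order equation, any nontrivial $y$ avoids the zero section, so $g'(\tau) > 0$ everywhere, i.e.\ $g$ is strictly increasing along every nontrivial trajectory. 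This is the sole analytic ingredient; the rest is bookkeeping.

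For (i), strict monotonicity of $g$ gives $g(t_{1}) < g(t_{2})$ whenever $t_{1} < t_{2}$, i.e.\ $y(t_{1})\dot y(t_{1}) < y(t_{2})\dot y(t_{2})$, in direct contradiction with the boundary condition.

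For (ii), the assumption $y(0)\dot y(0) = 0$ forces $g(0) = 0$, and strict monotonicity then yields $g(\tau) > 0$ for $\tau > 0$. On the other hand $y, \dot y \to 0$ at $+\infty$ gives $g(\tau)\to 0$. A strictly increasing function that is positive on $(0,\infty)$ cannot simultaneously tend to $0$ at $+\infty$ (because $g(\tau) \geq g(\varepsilon) > 0$ for all $\tau \geq \varepsilon$), which provides the contradiction. The argument at $-\infty$ is symmetric.

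For (iii), suppose $y$ is nontrivial and bounded on $\mathbb{R}$. Since $(y^{2})' = 2g$, the identity
\[
y^{2}(b) - y^{2}(a) = 2 \int_{a}^{b} g(\tau)\,d\tau
\]
is uniformly bounded in $a, b$. Combined with strict monotonicity of $g$, this forces the one-sided limits $L^{\pm} := \lim_{\tau \to \pm\infty} g(\tau)$ to exist and be finite, with $L^{-} < L^{+}$. If $L^{+} > 0$, then $g(\tau) \geq L^{+}/2$ for $\tau$ sufficiently large, so $\int_{0}^{b} g\,d\tau$ grows linearly in $b$, contradicting boundedness of $y^{2}$; similarly $L^{-} < 0$ would make $\int_{a}^{0} g\,d\tau \to -\infty$, violating $y^{2} \geq 0$. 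Hence $L^{-} \geq 0 \geq L^{+}$, contradicting $L^{-} < L^{+}$.

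The only step that required any thought is (iii); there is no serious obstacle, since the strict positivity of $f$ does all the work once $g = y\dot y$ is chosen as the monitoring functional. No auxiliary estimates or comparison arguments are needed.
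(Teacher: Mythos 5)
Your proof is correct, and for parts (i) and (ii) it is essentially the paper's argument in differential rather than integral form: the paper multiplies $\ddot y=fy$ by $y$ and integrates over $[t_1,t_2]$ to obtain
$y(t_1)\dot y(t_1)-y(t_2)\dot y(t_2)+\int_{t_1}^{t_2}(\dot y^2+fy^2)\,d\tau=0$,
which is exactly the integrated form of your identity $\frac{d}{d\tau}(y\dot y)=\dot y^2+fy^2>0$, and the two conclusions follow identically. Where you genuinely depart from the paper is part (iii): the paper disposes of bounded solutions by invoking the fact that, the asymptotic equation being hyperbolic, any bounded solution must decay exponentially fast, which reduces (iii) to the limiting case of the integral identity ($\int_{\mathbb{R}}(\dot y^2+fy^2)\,d\tau=0$ forces $y\equiv 0$). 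You instead argue purely from the boundedness of $y$, combining $(y^2)'=2y\dot y$ with the strict monotonicity of $g=y\dot y$; this is more elementary, avoids the exponential-decay input entirely, and in fact needs only $y$ (not $\dot y$) to be bounded. The one blemish is your justification in the case $L^-<0$: the divergence $\int_a^0 g\,d\tau\to-\infty$ forces $y^2(a)\to+\infty$, so what is contradicted is the boundedness of $y$, not the inequality $y^2\ge 0$ as you wrote; the contradiction stands either way, so this is a misattribution rather than a gap.
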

\begin{proof}  Suppose $y$ is solution of (\ref{4.3}), then
multiply by $y$ and by integrating over the time interval $[t_1,t_2]$ we have \bea
y(t_1)\dot{y}(t_1)-y(t_2)\dot{y}(t_2)+\int_{t_1}^{t_2}(\dot{y}^2+fy^2)d\tau=0.\lb{lem4.1f}
\eea The first conclusion now readily follows. By taking limit of
(\ref{lem4.1f}) could get the second conclusion. The third
conclusion follows from the fact that any bounded solution must
decay exponential fast.
\end{proof}

Recall that  $V_d,V_n$ are
Lagrangian subspaces corresponding to the Dirichlet and Neumann
boundary conditions. In the two dimensional case,
let $e_1=(1,0)^T$, $e_2=(0,1)^T$. Then it is obvious that  $V_d,V_n$
are the  linear spaces spanned by $e_1,e_2$.  An easy computation shows that $z(0)=e_1$ is
equivalent to $y(0)=1$ and $\dot{y}(0)=0$. In the same way,  $z(0)=e_2$
is equivalent to $y(0)=0$ and $\dot{y}(0)=r$.  The second conclusion of
Lemma \ref{lem4.1}. implies that for $r\neq0$, the system is
nondegenerate. However this is not true for $r=0$. In fact, let $r=0$ in
equations (\ref{4.1}-\ref{4.2}). Then there is a nontrivial solution satisfies
$Z(0)=e_2$ and $Z(t)\rightarrow 0$. Thus we have

\begin{lem}\label{lem4.2}  Supposing  $r>-1/8$,  the system on $l_0^-$ is 
nondegenerate with $V_d$,   and it is  nondegenerate with $V_n$ if and only if  $r\neq0$.
 \end{lem}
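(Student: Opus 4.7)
The plan is to read the nondegeneracy condition purely through the scalar second-order equation (\ref{4.3}) and exploit the strict positivity $f(\tau)>0$ which is guaranteed by the standing hypothesis $r>-1/8$. The key observation is that, by Proposition \ref{prop2.1}, the assumption $r>-1/8$ makes the equilibria $P_\pm$ hyperbolic, so the asymptotic equation $\ddot y=(r+\tfrac18)y$ has the splitting $e^{\pm\sqrt{r+1/8}\,\tau}$ and a standard perturbation argument (Levinson-type) implies that every solution of (\ref{4.3}) bounded on $\mathbb{R}^+$ lies in the one-dimensional stable direction and hence decays exponentially together with $\dot y$ as $\tau\to+\infty$. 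In particular, any solution $z=(y,x)$ of the system with $z(0)\in V_0$ that is bounded on $\mathbb{R}^+$ (as required in Definition \ref{dfnd}) produces a $y$-component with $y(\tau),\dot y(\tau)\to 0$.

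The heart of the proof is the energy identity from the proof of Lemma \ref{lem4.1}(i),
\begin{equation*}
y(0)\dot y(0)-y(T)\dot y(T)+\int_0^T\bigl(\dot y^2+fy^2\bigr)\,d\tau=0,
\end{equation*}
in which the decay just obtained kills the boundary term at $T\to+\infty$. From the description right before the statement, $z(0)\in V_d$ means $(y(0),\dot y(0))=(y_0,0)$ and $z(0)\in V_n$ means $(y(0),\dot y(0))=(0,ry_0)$; in both cases $y(0)\dot y(0)=0$. Thus, passing $T\to+\infty$ and using $f>0$ forces $y\equiv 0$ for any bounded solution originating in $V_d$ or $V_n$.

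For $V_d$, $y\equiv 0$ immediately gives $y(0)=0$, which combined with the Dirichlet condition $x(0)=0$ yields $z\equiv 0$; this proves nondegeneracy with $V_d$ for all $r>-1/8$. For $V_n$ with $r\neq 0$, $y\equiv 0$ implies $\dot y\equiv 0$ and substituting into (\ref{4.1}) gives $rx\equiv 0$, hence $x\equiv 0$; this proves nondegeneracy. For the converse at $r=0$, the system (\ref{4.1})--(\ref{4.2}) decouples: $\dot y=-\tfrac{Q}{4}y$ with $y(0)=0$ forces $y\equiv 0$, and the remaining equation $\dot x=\tfrac{Q}{4}x$ integrates explicitly to $x(\tau)=x_0\cosh^{-1/2}(\tfrac{\sqrt 2}{2}\tau)$, which is a nontrivial bounded (indeed decaying) solution with $z(0)\in V_n$, exhibiting the degeneracy. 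This explicit computation is what produces the ``only if'' direction.

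The only real subtlety is the upgrade from boundedness to exponential decay of $y$ on $\mathbb{R}^+$, needed to annihilate the boundary term at $+\infty$ in the energy identity. This is not obstacle-heavy: it is a direct consequence of the hyperbolic asymptotics of (\ref{4.3}) under $r>-1/8$ and uses no ingredients beyond those already present in the proof of Lemma \ref{lem4.1}(iii). Everything else is the same positivity/integration-by-parts argument already used there, adapted from $\mathbb{R}$ to $\mathbb{R}^+$.
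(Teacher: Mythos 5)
Your argument is correct and is essentially the paper's own: the paper likewise reads nondegeneracy through the scalar equation (\ref{4.3}), applies the energy identity of Lemma \ref{lem4.1} after noting that $z(0)\in V_d$ or $z(0)\in V_n$ forces $y(0)\dot y(0)=0$, and exhibits the same explicit decaying solution (with $y\equiv 0$ and $\dot x=\tfrac{Q}{4}x$) at $r=0$ for the ``only if'' direction. The only difference is that you spell out the exponential decay of bounded solutions needed to kill the boundary term, whereas the paper simply quotes this from the hyperbolicity of the limit equilibria (citing Abbondandolo--Majer).
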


We firstly consider the Maslov
index on $l_0^+$, so we have the following result. 
\begin{lem}\label{lem4.6} Suppose $r>-1/8$ and $r\neq0$. Then \bea
i_-(V_d;l_0^+)=i_-(V_n;l_0^+)=0.\lb{i0p}\eea
 \end{lem}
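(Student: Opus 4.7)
My plan is to work in the $2$-dimensional $(y,x)$-coordinates of Equations \eqref{4.1}--\eqref{4.2}, where every $1$-dimensional subspace is Lagrangian so the Maslov index on $l_0^+\cong\R^-$ simply records the (signed) crossings at which the unstable line $V_u(\tau)$ coincides with $V_d$ or $V_n$. In these coordinates $V_d=\mathrm{span}(e_1)=\{x=0\}$ and $V_n=\mathrm{span}(e_2)=\{y=0\}$, so for a nontrivial unstable solution $z_u=(y_u,x_u)^T$ (one in the unstable manifold of $P_+$, i.e.\ decaying exponentially as $\tau\to-\infty$), I would translate each intersection condition into a scalar condition on the second-order equation \eqref{4.3}: using $x_u=(\dot y_u+(Q/4)y_u)/r$ from \eqref{4.1}, the intersection $V_u(\tau)\cap V_d\neq0$ becomes $\dot y_u(\tau)+(Q(\tau)/4)y_u(\tau)=0$, and $V_u(\tau)\cap V_n\neq0$ becomes $y_u(\tau)=0$.

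The decisive input is Lemma \ref{lem4.1}(i) rephrased as strict monotonicity: for every nontrivial solution of \eqref{4.3}, the function $\tau\mapsto y(\tau)\dot y(\tau)$ is strictly increasing, since its derivative is the strictly positive integrand $\dot y^2+fy^2$ (recall $f>0$). For the unstable $y_u$, the standard stable-manifold estimate $y_u(\tau),\dot y_u(\tau)=O(e^{\eta\tau})$ as $\tau\to-\infty$ (with $\eta=\sqrt{1/8+r}>0$) forces $\lim_{\tau\to-\infty}y_u(\tau)\dot y_u(\tau)=0$, and strict increase then yields $y_u(\tau)\dot y_u(\tau)>0$ for every $\tau\in\R$. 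After fixing a sign I may assume $y_u>0$ and $\dot y_u>0$ throughout. Now on $l_0^+$, which corresponds to $\tau\leq 0$, one has $Q_{l_0}(\tau)=-\sqrt 2\tanh(\sqrt 2\tau/2)\geq 0$, so the combination $\dot y_u+(Q/4)y_u$ is a strictly positive term plus a nonnegative term and never vanishes; likewise $y_u$ never vanishes. Thus $V_u(\tau)$ has no crossings with $V_d$ or $V_n$ on $(-\infty,0]$.

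To close the argument I would verify that neither endpoint contributes a Maslov jump. A direct diagonalisation of $J\hat B_r(P_+)$ shows that $V_u^+$ is spanned by $(r,\sqrt 2/4-\eta)^T$ in $(y,x)$-coordinates; because $r\neq 0$ (equivalently $\eta\neq\sqrt 2/4$), one has $V_u^+\pitchfork V_d$ and $V_u^+\pitchfork V_n$, which is exactly the nondegeneracy required by the Definition \eqref{cl1} of $i_-$ and rules out a contribution at $-\infty$. Transversality at $\tau=0$ is the same sign statement $y_u(0)\dot y_u(0)>0$. Combining the absence of interior crossings with transversal endpoints gives $i_-(V_d;l_0^+)=i_-(V_n;l_0^+)=0$. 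The only subtle point I anticipate is justifying the $O(e^{\eta\tau})$ asymptotics for $y_u$ at $-\infty$, but this is the classical stable-manifold statement for the hyperbolic linear system at $P_+$ that is already implicit in the discussion following Definition \ref{dfnd}.
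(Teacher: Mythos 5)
Your argument is correct and is essentially the paper's own proof: both rest on the identity $\frac{d}{d\tau}(y\dot y)=\dot y^{2}+fy^{2}>0$ coming from Lemma \ref{lem4.1} together with the decay of the unstable solution at $-\infty$ to conclude $y\dot y>0$ everywhere, and then use $Q\geq 0$ on $l_0^{+}$ to exclude crossings of $V_u(\tau)$ with both $V_d$ and $V_n$. The only slip is that $(r,\sqrt{2}/4-\eta)^{T}$ is the \emph{stable} eigenvector at $P_+$ (the unstable one is $(r,\sqrt{2}/4+\eta)^{T}$, i.e.\ $e^{+}_{+}=(\eta-\sqrt{2}/4,1)^{T}$), but this does not affect the transversality statements you invoke, which still hold for $r\neq 0$.
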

\begin{proof} By the definition of Maslov index, we only need to 
show that there is no nontrivial solution. 
If not,  we assume that  $Z(\tau)$ is the solution of
the equations (\ref{4.1}-\ref{4.2}). Then  $Z(\tau)\rightarrow0$ as
$\tau\rightarrow-\infty$ and then $\dot{y}(\tau)y(\tau)\rightarrow0$.
Let $t_1\rightarrow -\infty$ in (\ref{lem4.1f}), we have \bea
-y(t_2)\dot{y}(t_2)+\int_{-\infty}^{t_2}(\dot{y}^2+fy^2)d\tau=0.
\eea Please note that
$y(t_2)\dot{y}(t_2)=-\frac{Q}{4}y^2(t_2)+rx(t_2)y(t_2) $. Then for
$t_2\in\mathbb{R}^-$, $Z(t_2)\in V_d$ or $V_n$ implies
$y(t_2)\dot{y}(t_2)\leq0$, so we get the result.
\end{proof}

We continuous to compute the Maslov index on $l_0^-$.
\begin{cor}\label{cor4.3} Suppose $r>-1/8$ and $r\neq0$,  then \bea i_+(V_n,V_d;l_0^-)=0,\lb{i0nd}\eea and
\bea i_+(V_n,V_n;l_0^-)=\left\{\begin{array}{ll} 1 & \quad
           {\mathrm if}\; r\in(-\frac{1}{8},0),  \\
           \\
 0 & \quad {\mathrm if}\; r>0.\end{array}\right.\lb{i0nn} \eea  \end{cor}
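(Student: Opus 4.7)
The strategy is to reduce everything to the scalar ODE $\ddot y = f(\tau) y$ of \eqref{4.3}, where $f = \tfrac{1}{8}\bigl(1-\tanh^{2}(\tfrac{\sqrt{2}\,\tau}{2})\bigr) + (r+\tfrac{1}{8}) > 0$, and to count zeros of $y$. The dictionary is $V_n \cap \Psi_0(\tau)V \neq 0$ precisely when the scalar solution satisfies $y(\tau) = 0$, since $V_n$ is the $y$-axis in $(y,x)$-coordinates. The initial data are $(y(0),\dot y(0)) = (1,0)$ for $V = V_d$ and $(0,r)$ for $V = V_n$. Before counting crossings I would check that the right endpoint $\tau = +\infty$ is regular: by Lemma \ref{lem4.2} the system on $l_0^-$ is nondegenerate with respect to both $V_d$ and $V_n$ when $r \neq 0$, so $\Psi_0(\tau)V$ converges to the unstable line $V_u^+$ at $P_-$. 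A short computation shows $V_u^+$ is spanned by $(\tfrac{\sqrt{2}}{4} + \eta,\,1)^{T}$ with $\eta = \sqrt{r+1/8} > 0$, which lies in neither $V_d$ nor $V_n$. Thus the endpoint contribution in the crossing-form formula \eqref{1.3c} vanishes.

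For $i_+(V_n, V_d; l_0^-)$: the starting point is not a crossing since $V_d \pitchfork V_n$. From $\ddot y = fy$ with $f > 0$ and $(y(0),\dot y(0)) = (1,0)$ one has $\ddot y(0) > 0$, hence $\dot y(\tau) > 0$ and $y(\tau) > 0$ for every $\tau > 0$; alternatively, identity \eqref{lem4.1f} with $t_1 = 0$ immediately yields $y(t_2)\dot y(t_2) > 0$ for $t_2 > 0$. Either way there are no interior crossings, and \eqref{i0nd} follows.

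For $i_+(V_n, V_n; l_0^-)$: the starting point $\tau = 0$ itself is a crossing with $V_n \cap V_n = V_n$. Since $Q_{l_0}(0) = 0$, one has $\hat B_0(0) = \mathrm{diag}(1,-r)$ and $\dot\Psi_0(0) = J\hat B_0(0)$, so that the crossing form of \eqref{1.3a} on $v = (0,x) \in V_n$ is
\begin{equation*}
\langle -J\dot\Psi_0(0) v,\, v\rangle \;=\; \langle \hat B_0(0) v,\, v\rangle \;=\; -r\,x^{2},
\end{equation*}
giving $m^+(\Gamma) = 1$ when $r \in (-1/8, 0)$ and $m^+(\Gamma) = 0$ when $r > 0$. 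Further interior crossings are excluded exactly as in Lemma \ref{lem4.1}(ii): applying \eqref{lem4.1f} with $t_1 = 0$ and $y(0) = 0$ gives $y(t_2)\dot y(t_2) = \int_0^{t_2}(\dot y^{2} + f y^{2})\,d\tau > 0$, so $y(t_2) \neq 0$ for $t_2 > 0$. Formula \eqref{1.3c}, together with the regular endpoint at $+\infty$, then yields $i_+(V_n, V_n; l_0^-) = m^+(\Gamma)$, which is \eqref{i0nn}. The only mildly delicate point is ensuring the Maslov index on the half-line is well-defined at the noncompact endpoint, and this is handled once the transversality $V_u^+ \pitchfork V_n$ has been established at the outset.
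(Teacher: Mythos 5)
Your proof is correct and follows essentially the same route as the paper: exclude interior crossings via the integral identity of Lemma \ref{lem4.1} (zeros of the scalar solution $y$), evaluate the crossing form $\langle \hat B(0)v,v\rangle=-rx^{2}$ at the single crossing $\tau=0$ for the $V_n$-initial condition, and apply formula \eqref{1.3c}; your extra verification of transversality at the $+\infty$ endpoint is left implicit in the paper but is the same fact (note only that the limit space you call $V_u^{+}$ is the paper's $V_u^{-}$, the unstable subspace at $P_-$, though the spanning vector you compute is the correct one).
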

 \begin{proof} The proof follows from equation (\ref{1.3c}).  Please note
that Lemma \ref{lem4.1}. implies that there are no nontrivial
solutions which satisfy  $Z(0)=e_1,e_2$ and $Z(T)\in V_n$ for some
$T>0$.  There is a crossing for $i_+(V_n,V_n;l_0^-)$ at $T=0$. An easy computation 
shows that the crossing 
form $\Gamma(V_n,V_n,0)$ is positive for $r\in(-\frac{1}{8},0)$ and 
negative for $r>0$, which implies the results. \end{proof}

 To compute the collision index $i_+(V_d,V_d;l_0^-)$ and $i_+(V_d,V_n;l_0^-)$,
we will use the H\"{o}rmander index. We observe that, in the point
$(0,Q_-)$, $J\hat{B}_r(-\infty)=\left( \begin{array}{cccc}\frac{\sqrt{2}}{4}
& r\cr 1 &-\frac{\sqrt{2}}{4}\end{array}\right)$ and hence the eigenvalues
$\lambda_\pm=\pm(\frac{1}{8}+r)^{\frac{1}{2}}$ with eigenvector
$e^-_\pm=(\frac{\sqrt{2}}{4}\pm(\frac{1}{8}+r)^{\frac{1}{2}},1)^T$.
The unstable subspace $V^-_u$ is spanned by $e^-_+$ and stable
subspace $V^-_s$ is spanned by $e^-_-$. Similarly, at $(0,Q_+)$,
$J\hat{B}_r(+\infty)=\left( \begin{array}{cccc}\frac{-\sqrt{2}}{4} & r\cr 1
&\frac{\sqrt{2}}{4}\end{array}\right)$, the eigenvalues
$\lambda_\pm=\pm(\frac{1}{8}+r)^{\frac{1}{2}}$ with eigenvector
$e^+_\pm=(-\frac{\sqrt{2}}{4}\pm(\frac{1}{8}+r)^{\frac{1}{2}},1)^T$.
The unstable subspace $V^+_u$ is spanned by $e^+_+$ and stable
subspace $V^+_s$ is spanned by $e^+_-$. The H\"{o}rmander index could be
computed  by (\ref{hp}), (\ref{hc}), or we just
choose simple Lagrangian paths connected $\Lambda(0), \Lambda(1) $ and 
compute the difference. For reader's convenience, we list the result
below. \bea s(V_d,V_n,V_d,V^-_u)&=&1,\lb{s1}\\
s(V_d,V_n,V_n,V^-_u)&=&0,\lb{s2} \eea
 \bea
s(V_n,V_d,V^-_u,V^+_u)=\left\{\begin{array}{ll} 1 & \quad
           {\mathrm if}\; r\in(-\frac{1}{8},0),  \\
           \\
 0, & \quad {\mathrm if}\; r>0,\end{array}\right. \lb{s3} \eea
\bea  s(V_d,V^-_u,V_n,V^-_s)= s(V_d,V^-_u,V_d,V^-_s)=0.\lb{s4} \eea
The following  Figures illustrate the H\"{o}rmander index
(\ref{s1}-\ref{s4}), where $y$ is the
horizontal coordinate, $x$ is the vertical coordinate and the 
anticlockwise rotation is
 positive rotation.
\begin{figure}[H]
\begin{minipage}[t]{0.5\linewidth}
\centering
\includegraphics[height=0.6\textwidth,width=0.78\textwidth]{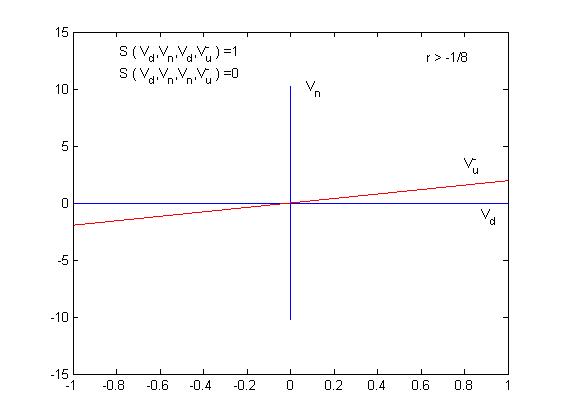}
     \caption{ $s(V_d,V_n,V_d,V^-_u)$ and $s(V_d,V_n,V_n,V^-_u)$}
\label{fig:side:a2}
\end{minipage}%
\begin{minipage}[t]{0.5\linewidth}
\centering
\includegraphics[height=0.6\textwidth,width=0.78\textwidth]{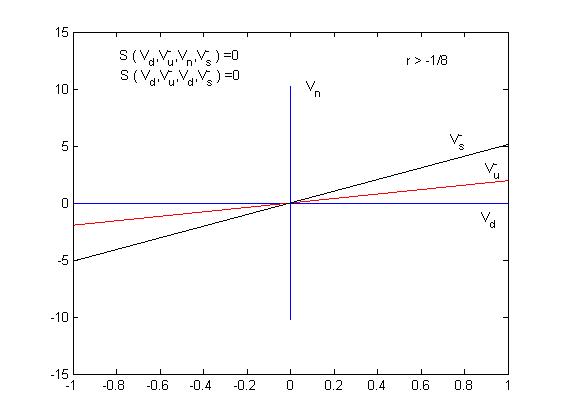}
     \caption{ $s(V_d,V^-_u,V_n,V^-_s)$ and $s(V_d,V^-_u,V_d,V^-_s)$}
\label{fig:side:b2}
\end{minipage}
\end{figure}
\begin{figure}[H]
\begin{minipage}[t]{0.5\linewidth}
\centering
\includegraphics[height=0.6\textwidth,width=0.78\textwidth]{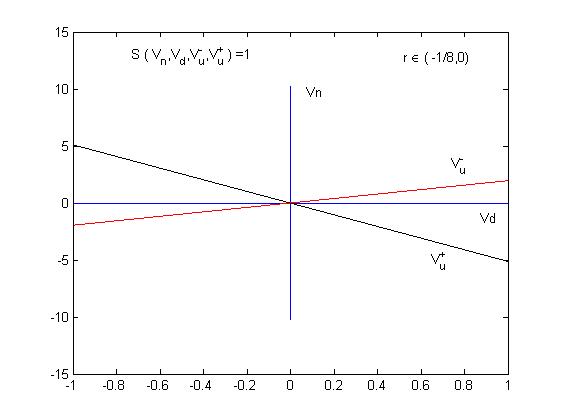}
     \caption{ $s(V_n,V_d,V^-_u,V^+_u)$ for $r\in(-\frac{1}{8},0)$.}
\label{fig:side:a3}
\end{minipage}%
\begin{minipage}[t]{0.5\linewidth}
\centering
\includegraphics[height=0.6\textwidth,width=0.78\textwidth]{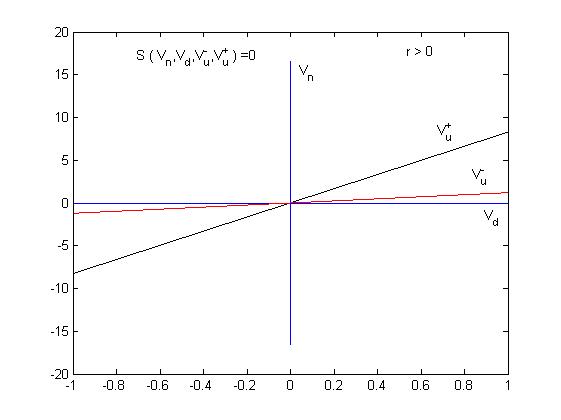}
     \caption{ $s(V_n,V_d,V^-_u,V^+_u)$ for $r>0$.}
\label{fig:side:b3}
\end{minipage}
\end{figure}
From Lemma \ref{lem4.2}., the
system is nondegenerate for $r\neq0$, so
$\ga(\tau)V_d\longrightarrow V^-_u$ and $\ga(\tau)V_n\longrightarrow
V^-_u$ as $\tau\longrightarrow+\infty$. We have \bea
i_+(V_d,V_d;l_0^-)=i(V_n,V_d;l_0^-)+s(V_d,V_n,V_d,V^-_u), \lb{4.8}
\eea
 \bea i_+(V_d,V_n;l_0^-)=i(V_n,V_n;l_0^-)+s(V_d,V_n,V_n,V^-_u). \lb{4.9} \eea
 Thus from equations (\ref{s1}-\ref{s2}) we have
\begin{cor}\label{cor4.4}  Suppose $r>-1/8$ and $r\neq0$, \bea i_+(V_d,V_d;l_0^-)=1,\lb{i0dd}\eea and
\bea i_+(V_d,V_n;l_0^-)=\left\{\begin{array}{ll} 1 & \quad
           {\mathrm if}\; r\in(-\frac{1}{8},0),  \\
           \\
 0, & \quad {\mathrm if}\; r>0.\end{array}\right.\lb{i0dn} \eea  \end{cor}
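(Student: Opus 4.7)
The plan is a short bookkeeping argument. The decomposition formulas~(\ref{4.8}) and~(\ref{4.9}) have already been set up in the discussion immediately preceding the statement, by applying the H\"ormander-index identity $s(V_0,V_1;\Lambda(0),\Lambda(1))=\mu(V_0,\Lambda(\cdot))-\mu(V_1,\Lambda(\cdot))$ from \cite{RS1} to the Lagrangian paths $\Lambda(\tau)=\ga(\tau)W$ on $l_0^-$ for $W\in\{V_d,V_n\}$. I would first check that these paths admit the endpoints needed for~(\ref{4.8})--(\ref{4.9}) to make sense: the hypotheses $r>-1/8$ and $r\neq 0$ combined with Lemma~\ref{lem4.2} exclude bounded nontrivial solutions meeting $V_d$ or $V_n$ at $\tau=0$, so each of $V_d,V_n$ is transversal to the stable subspace $V_s(0)$; by the asymptotic result recalled from \cite{AM}, this forces $\ga(\tau)W\to V^-_u$ as $\tau\to+\infty$, giving each path the correct initial and terminal Lagrangians $(W,V^-_u)$ so that the H\"ormander formula legitimately produces~(\ref{4.8}) and~(\ref{4.9}).

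With the decompositions in hand, I would simply substitute the precomputed inputs. From Corollary~\ref{cor4.3} one has $i_+(V_n,V_d;l_0^-)=0$, and $i_+(V_n,V_n;l_0^-)$ equals $1$ for $r\in(-1/8,0)$ and $0$ for $r>0$; from~(\ref{s1}) and~(\ref{s2}) one has $s(V_d,V_n;V_d,V^-_u)=1$ and $s(V_d,V_n;V_n,V^-_u)=0$. Inserting these values in~(\ref{4.8}) and~(\ref{4.9}) immediately produces $i_+(V_d,V_d;l_0^-)=1$ and the desired case split for $i_+(V_d,V_n;l_0^-)$, which are~(\ref{i0dd}) and~(\ref{i0dn}).

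No genuine obstacle is anticipated. The only step that deserves a brief remark is the confirmation that $\ga(\tau)V_n$ has a well-defined limit at $+\infty$ equal to $V^-_u$; this is precisely where the hypothesis $r\neq 0$ is essential, since for $r=0$ Lemma~\ref{lem4.2} fails (there is a bounded solution with $Z(0)\in V_n$), so the path $\ga(\tau)V_n$ would then acquire a component converging into the stable direction rather than tending to $V^-_u$, and both the H\"ormander decomposition and Corollary~\ref{cor4.3} would be invalid. Once this transversality check is recorded, the rest of the proof is a single line of arithmetic.
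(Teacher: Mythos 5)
Your proposal is correct and follows essentially the same route as the paper: the paper likewise derives the corollary by invoking Lemma \ref{lem4.2} to ensure $\ga(\tau)V_d,\ga(\tau)V_n\to V^-_u$, writing the H\"ormander-index decompositions (\ref{4.8})--(\ref{4.9}), and substituting the values from Corollary \ref{cor4.3} and (\ref{s1})--(\ref{s2}). Your extra remark on why $r\neq 0$ is needed for the limit of $\ga(\tau)V_n$ is a welcome elaboration of the paper's one-line justification, but not a different argument.
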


We come back to the higher dimension.
We denote by $\phi(R)$  the number of  total negative eigenvalues of $R$. It is obvious that, $\phi(R)=\phi(a_0)$ which is the
 Morse index of the central configuration $a_0$.  By property V of Maslov index, we have
\begin{cor}\label{cor4.5} Supposing $\lambda_1(R)>-1/8$, $R$ is nondegenerate, then we have 
\bea i_-(V_d;l^+_0)=0, \quad i_-(V_n;l^+_0)=0,\lb{i01} \eea  \bea i_+(V_d,V_d;l_0^-)=k,\quad i_+(V_d,V_n;l_0^-)=\phi(R),\lb{i02}\eea and
\bea i_+(V_n,V_d;l_0^-)=0,\quad  i_+(V_n,V_n;l_0^-)=\phi(R).
\lb{i03} \eea
\end{cor}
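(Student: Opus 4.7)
The plan is to reduce the $k$-dimensional case to the two-dimensional results already established in Lemma \ref{lem4.6}, Corollary \ref{cor4.3}, and Corollary \ref{cor4.4} by exploiting the diamond decomposition of $\hat{B}$. Specifically, after choosing coordinates in which $R=\mathrm{diag}(r_1,\dots,r_k)$, we have $\hat{B}=\hat{B}_{r_1}\diamond\cdots\diamond\hat{B}_{r_k}$ on $l_0$, so the fundamental solution splits as a symplectic direct sum $\Psi_0(\tau)=\Psi_0^{1}(\tau)\oplus\cdots\oplus\Psi_0^{k}(\tau)$, and each reference Lagrangian splits accordingly: $V_d=V_d^1\oplus\cdots\oplus V_d^1$ and $V_n=V_n^1\oplus\cdots\oplus V_n^1$. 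The nondegeneracy hypothesis $R$ nondegenerate means $r_j\neq 0$ for every $j$, which (together with $r_j>-\tfrac{1}{8}$) ensures each two-dimensional factor falls under the hypotheses of the preceding lemmas.

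Next I would invoke Property V (symplectic additivity, equation \eqref{adp1.5add}) to write each collision index as a sum of the two-dimensional collision indices. For the $l_0^+$ indices, Lemma \ref{lem4.6} gives $i_-(V_d^1;l_0^+)=i_-(V_n^1;l_0^+)=0$ for every $j$, so summing yields the first pair of identities in \eqref{i01}. For $i_+(V_d,V_d;l_0^-)$, equation \eqref{i0dd} of Corollary \ref{cor4.4} contributes $1$ from each factor, giving a total of $k$. For $i_+(V_n,V_d;l_0^-)$, equation \eqref{i0nd} of Corollary \ref{cor4.3} contributes $0$ from every factor.

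The two mixed cases $i_+(V_d,V_n;l_0^-)$ and $i_+(V_n,V_n;l_0^-)$ are the only ones where the sign of $r_j$ matters. By equations \eqref{i0dn} and \eqref{i0nn}, each factor contributes $1$ precisely when $r_j\in(-\tfrac{1}{8},0)$ and $0$ when $r_j>0$. Under the hypothesis $\lambda_1(R)>-\tfrac{1}{8}$, the number of such indices $j$ is exactly the number of strictly negative eigenvalues of $R$, which by definition equals $\phi(R)$. Summing therefore gives $\phi(R)$ in both cases, proving \eqref{i02} and \eqref{i03}.

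The computation is essentially mechanical once one has Property V, the diamond decomposition, and the two-dimensional building blocks; I do not expect any genuine obstacle. The one point that requires mild care is checking that the nondegeneracy of the endpoint data (needed so the Maslov indices $i_\pm$ in Section 3.1 are well defined through the formula \eqref{cl0}--\eqref{clr}) carries through the direct sum: this is immediate because each factor is nondegenerate by Lemma \ref{lem4.2}, so the total system is nondegenerate with respect to both $V_d$ and $V_n$, and the limits $V_u^\pm$ of the unstable bundle decompose as the direct sums of the factorwise limits.
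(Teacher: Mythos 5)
Your proposal is correct and coincides with the paper's own argument: the paper proves Corollary \ref{cor4.5} precisely by diagonalising $R$, splitting $\hat{B}$ as the symplectic sum $\hat{B}_{r_1}\diamond\cdots\diamond\hat{B}_{r_k}$, and invoking Property V to add the two-dimensional contributions from Lemma \ref{lem4.6}, Corollary \ref{cor4.3} and Corollary \ref{cor4.4}, with the count of factors having $r_j\in(-\tfrac{1}{8},0)$ giving $\phi(R)$. Your added remark on the factorwise nondegeneracy (via Lemma \ref{lem4.2}) is a correct and welcome elaboration of a point the paper leaves implicit.
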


We then consider the linear system on $l_+$, and recall that under the
collision nondegenerate condition $i(V)$ is well defined.
\begin{prop} Under the collision nondegenerate condition, we have
\bea i(V_n;l_+)=i(V_d;l_+)+\phi(R) \lb{ndr}\eea
\end{prop}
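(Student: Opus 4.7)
The plan is to reduce the identity $i(V_n;l_+)-i(V_d;l_+)=\phi(R)$ to a H\"{o}rmander-index computation that depends only on the endpoints $V_u^{\pm}$ of the Lagrangian path $V_u(\tau)$, and then to evaluate that H\"{o}rmander index by exploiting the $\diamond$-decomposition that is available at the hyperbolic equilibria $P_{\pm}$, where $q=0$.

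\emph{Step 1 (reduction to a H\"{o}rmander index).} By definition of the collision index and since $V_u(-\infty)=V_u^{-}$ and, under the collision nondegeneracy hypothesis, $V_u(+\infty)=V_u^{+}$, the very definition of the H\"{o}rmander index gives
\[
i(V_n;l_+)-i(V_d;l_+)=\mu(V_n,V_u(\tau))-\mu(V_d,V_u(\tau))=s(V_n,V_d;V_u^{-},V_u^{+}).
\]
The key point is that this quantity depends only on the endpoints $V_u^{\pm}$, so we may replace $V_u(\tau)$ by any convenient Lagrangian path connecting them.

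\emph{Step 2 ($\diamond$-decomposition at the equilibria).} Because $q=0$ at $P_{\pm}$, the mixing term $q\mathbb{J}_{k/2}$ in $\hat{B}$ vanishes at the equilibria. Diagonalising $R=\mathrm{diag}(r_1,\dots,r_k)$, the linearisations $D_{\pm}=J\hat{B}(P_{\pm})$ split as $\diamond$-products of $k$ planar hyperbolic subsystems, whence
\[
V_u^{\pm}=V_{u,1}^{\pm}\diamond\cdots\diamond V_{u,k}^{\pm},
\]
while also $V_n=V_n^{1}\diamond\cdots\diamond V_n^{1}$ and $V_d=V_d^{1}\diamond\cdots\diamond V_d^{1}$. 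Choose a Lagrangian path $\Lambda(\tau)=\diamond_{j}\Lambda_{j}(\tau)$ in $Lag(2k)$ with each $\Lambda_{j}$ a path in $Lag(2)$ joining $V_{u,j}^{-}$ to $V_{u,j}^{+}$; symplectic additivity of the Maslov index (Property V, (\ref{adp1.5add})) then gives
\[
s(V_n,V_d;V_u^{-},V_u^{+})=\sum_{j=1}^{k}s(V_n^{1},V_d^{1};V_{u,j}^{-},V_{u,j}^{+}).
\]

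\emph{Step 3 (planar computation).} For a scalar parameter $r>-1/8$, $r\neq 0$, set $\eta_r=\sqrt{1/8+r}$; from the expressions preceding (\ref{s1}),
\[
V_u^{-}(r)=\mathrm{span}\{(\tfrac{\sqrt{2}}{4}+\eta_r,1)\},\qquad V_u^{+}(r)=\mathrm{span}\{(-\tfrac{\sqrt{2}}{4}+\eta_r,1)\}.
\]
View $Lag(2)\cong\mathbb{RP}^{1}$ as a circle containing the distinguished points $V_d$ and $V_n$. When $r>0$, both first coordinates are strictly positive, so $V_u^{\pm}$ lie in the same open arc bounded by $V_d$ and $V_n$; a short path $\Lambda_{j}$ between them stays in that arc and encounters neither $V_d$ nor $V_n$, giving $s(V_n^{1},V_d^{1};V_u^{-},V_u^{+})=0$. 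When $r\in(-1/8,0)$, the two first coordinates have opposite signs, so $V_u^{\pm}$ sit on opposite sides of $V_n$. Taking the short path $\Lambda_{j}(\tau)=\mathrm{span}\{(a(\tau),1)\}$ with $a(\tau)$ linearly interpolating between the two first coordinates, the only crossing is with $V_n$ at the unique $\tau_{\ast}$ with $a(\tau_{\ast})=0$ (no crossing with $V_d$ occurs because the second coordinate is constantly $1$). Evaluating the crossing form (\ref{1.3a}) at $\tau_{\ast}$ with $v=(0,1)\in V_n$ and $w(\tau)=(a(\tau),0)\in V_d$ satisfying $v+w(\tau)\in\Lambda_{j}(\tau)$ yields a non-vanishing one-form, whose sign, once aligned with the symplectic convention used in (\ref{s1})--(\ref{s4}), equals $+1$. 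Hence $s(V_n^{1},V_d^{1};V_u^{-},V_u^{+})=1$.

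\emph{Step 4 (assembly).} Summing the contributions from Step 3,
\[
s(V_n,V_d;V_u^{-},V_u^{+})=\#\{j:r_j<0\}=\phi(R),
\]
which combined with Step 1 yields (\ref{ndr}). \emph{The main obstacle} is the sign bookkeeping in Step 3 for $r\in(-1/8,0)$: one must reconcile the planar crossing-form computation with the symplectic-form and orientation conventions implicit elsewhere in the paper, using (\ref{s1})--(\ref{s4}) as a template, to confirm that the single crossing contributes $+1$ rather than $-1$. A secondary issue is the non-generic case in which some $r_j=0$, where $V_u^{+}$ fails to be transverse to $V_n$; this can be handled either by a small perturbation of $R$ and a continuity argument, or by a careful accounting of the endpoint contribution in (\ref{1.3c}).
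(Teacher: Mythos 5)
Your proposal follows essentially the same route as the paper: reduce the difference $i(V_n;l_+)-i(V_d;l_+)$ to the H\"{o}rmander index $s(V_n,V_d;V_u^-,V_u^+)$, diagonalise $R$ so that everything splits into $k$ planar subsystems, and evaluate the planar H\"{o}rmander index, which is exactly the quantity the paper records in (\ref{s3}) (your Step 3 simply re-derives those values, and your answers $0$ for $r>0$ and $1$ for $r\in(-\tfrac18,0)$ agree with it). The argument is correct; the residual issues you flag (sign conventions, the non-generic case $r_j=0$) are handled in the paper by citing (\ref{s3}) and by working under the standing nondegeneracy of $R$.
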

\begin{proof}  Please note that $i(V_n;l_+)=i(V_d;l_+)+s(V_n,V_d,V^-_u,V^+_u)$,  we only need to show 
\bea s(V_n,V_d,V^-_u,V^+_u)=\phi(R).  \lb{3.15pf} \eea   We choose basis such that $R=diag(r_1,\cdots,r_n)$ and  consider the $2$-dimension  linear system  $\dot{z}=J_2\hat{B}_2(r)z$;
 (\ref{3.15pf}) is from
(\ref{s3}).

\end{proof}
Now we give the proof  of Theorem \ref{thm1.2}.
\begin{proof} Since $R$ is nondegenerate,
$V_u,V_d\pitchfork V^\pm_u$, from (\ref{pr1}), \bea
\lim_{e\to1}\mu(V_d,\hat{\ga}_e(\tau) V_d, \tau\in[0,\mathcal{T}]
)=i_-(V_d;l^+_0)+i_+(V_d,V_d;l^-_0)+i(V_d;l_+),\nonumber \eea
from (\ref{i01}-\ref{i02}), we have
\bea \lim_{e\to1}\mu(V_d,\hat{\ga}_e(\tau) V_d, \tau\in[0,\mathcal{T}])=k+i(V_d;l_+). \nonumber\eea 
Then,  (\ref{prd}) follows from Lemma \ref{lem2.3} and
the fact that $\ga_e(2\pi)=\hat{\ga}_e(\mathcal{T})$.

Similar, \bea
\lim_{e\to1}\mu(V_n,\hat{\ga}_e(\tau) V_n, \tau\in[0,\mathcal{T}] )=
i_-(V_n;l^+_0)+i_+(V_n,V_n;l^-_0)+i(V_n;l_+),\lb{prn1}\eea
(\ref{prn}) is from (\ref{i01}), (\ref{i03}-\ref{ndr}) and Lemma \ref{lem2.3}.
\end{proof}
It is worth noticing that in several cases  we can't analytically compute the 
collision index and for this reason, we now introduce a numerical method 
which can be useful in this situation. 
% % 
% % we can not judge the collision index analytically,  and now we
% % give a numerical methods here.

We first consider the case for
$\mathbb{R}^+$. We choose $V\in Lag(2n)$, such that $\hat{B}(\tau)|_V>0$ for
$\tau\in \mathbb{R}^\pm$. Then the crossing form $ \Gamma(\Lambda(\tau),V,\tau)>0$ and  
we have \bea
\mu(V,\ga(\tau)V_0)=\sum_{0<\tau_j<\infty} \nu(\tau_j),\nonumber\eea where
$\nu(\tau_j)=\dim V\cap \ga(\tau_j)V_0$. For the Lagrangian system,
we can always choose $V_d=V$ and then $\hat{B}(t)|_{V_d}=I_n>0$.
We can  get  the Maslov index from the H\"{o}rmander index, in fact
\bea\mu(V_1,\ga(\tau)V_0,
\tau\in[0,T])=\mu(V_d,\ga(\tau)V_0,[0,T])+s(V_1,V_d,V_0,\ga(T)V_0). \nonumber
\eea Under the nondegenerate conditions,
$\lim_{T\rightarrow\infty}\ga(T)V_0=V^+_u$, then we have
\bea\mu(V_1,\ga(\tau)V_0,
\tau\in\mathbb{R}^+)=\mu(V_d,\ga(\tau)V_0,\tau\in\mathbb{R}^+)+s(V_1,V_d,V_0,V^+_u).\nonumber
\eea The cases of  $\mathbb{R}^-$ and $\mathbb{R}$ are similar, so we
just make $-T$ be the starting time, where $T>0$ is large enough.

\begin{rem} \label{r3.e} For computing  the Maslov index $\mu(V_d,\ga(\tau)V_0)$, we 
start by choosing a basis $\{\xi_1(0),\cdots,\xi_n(0)\}$ of $V_0$ and,  
by using a numerical integrator, we get $\xi_k(t):=\ga(t)\xi_k(0)$. For
$j=1,\cdots,n$,  let $e_j$ the  basis of $V_d$, $t \mapsto M(t)$ be the path of
$2n\times2n$ matrices defined by 
$M(t):=(e_1,\cdots,e_n,\xi_1(t),\cdots,\xi_n(t))$ and we set $f(t)=\det(M(t))$. 
Then $\mu(V_d,\ga(t)V_0)$ is equal to the total number of zeros of
$f(t)$.  Since $t\mapsto|f(t)|$ is exponentially increasing, this method works well 
for not so large time. 
Instead, we use the robust numerical algorithm based on exterior algebra representation. 
We refer the interested reader to the following papers 
\cite{CDB1,CDB2, CDB3} in which the authors compute the Maslov index for homoclinic 
orbits. (Cfr. Section \S 6). \end{rem}

\section{Collision index for brake symmetry Central configurations}

In the next, we will consider the case that the central
configuration with brake symmetry. We start with the following Definition. 
\begin{defi}\lb{defs} The central configuration with normalized Hessian $R$ is said with
brake symmetry if there exists a $k\times k$ symmetry matrix $N$
which satisfies $N^2=I_k$,  $N{\mathbb  J}=-{\mathbb  J}N$ ,    $RN=NR$.
\end{defi}
To our knowledge, the  Lagrangian configuration  \cite{HLS} , Euler
central configuration \cite{LZhou}
 and the $1+n$ central configurations have the brake symmetry
 property \cite{M2},\cite{R3}. It could be interesting to find 
 an example without having this  symmetry property.

 In the brake symmetry case, let $\hat{N}=diag(N,-N)$, and denote
\bea g: x(\tau)\to \hat{N}x(\mathcal{T}-\tau). \eea Obviously,
$g^2=id$ and $g\cdot-J\frac{d}{d\tau}=-J\frac{d}{d\tau} \cdot g$. From the fact of
$q(\tau)=q(\mathcal{T}-\tau)$, $Q(\tau)=-Q(\mathcal{T}-\tau)$,  easy
computations show that
$\hat{N}\hat{B}(\mathcal{T}-\tau)=\hat{B}(\tau)\hat{N}$, and
consequently \bea g \hat{B}=\hat{B} g.  \eea Let $E^\pm=\ker(g\mp
I)$, then \bea \ker(-J\frac{d}{d\tau}-\hat{B}
)=\ker((-J\frac{d}{d\tau}-\hat{B})\big|_{E^+})\oplus \ker((-J\frac{d}{d\tau}-\hat{B})\big|_{E^-}).
\lb{kd} \eea Moreover, by the generalized Bott-type iteration
formula for Maslov index \cite{HS}( Th1.1) or \cite{LZZ}, we have
\bea i_1(\hat{\ga})+k=\mu(V^+(\hat{N}),\hat{\ga}(\tau)V^+(\hat{N}),
\tau\in[0,\mathcal{T}/2])+\mu(V^-(\hat{N}),\hat{\ga}(\tau)V^-(\hat{N}),
\tau\in[0,\mathcal{T}/2]), \lb{d1}\eea
 \bea i_{-1}(\hat{\ga})=\mu(V^+(\hat{N}),\hat{\ga}(\tau)V^-(\hat{N}), \tau\in[0,\mathcal{T}/2])+
 \mu(V^-(\hat{N}),\hat{\ga}(\tau)V^+(\hat{N}), \tau\in[0,\mathcal{T}/2]),\lb{d2} \eea
 where $V^\pm(\hat{N})=\ker(\hat{N}\mp I_{2n})$, $k=2n-4$.
Similarly,  we can   decompose of Dirichlet and Neumann boundary
condition as follows \bea
\mu(V_d,\hat{\ga}(\tau)V_d,\tau\in[0,\mathcal{T}])=\mu(V^+(\hat{N}),\hat{\ga}(\tau)V_d,
\tau\in[0,\mathcal{T}/2])+\mu(V^-(\hat{N}),\hat{\ga}(\tau)V_d),
\tau\in[0,\mathcal{T}/2]),\lb{d3} \eea \bea
\mu(V_n,\hat{\ga}(\tau)V_n,\tau\in[0,\mathcal{T}])=\mu(V^+(\hat{N}),\hat{\ga}(\tau)V_n),
\tau\in[0,\mathcal{T}/2])+\mu(V^-(\hat{N}),\hat{\ga}(\tau)V_n),
\tau\in[0,\mathcal{T}/2]). \lb{d4} \eea
 For the  iteration formula of brake orbits we refer the interested reader to \cite{LiuZ}.   Then we consider the
collision orbit, and let $\mathcal{K}$ be the space of bounded solution
of $\dot{z}=J\hat{B}(\tau)z$ on $l_+$, and $\mathcal{K}\pm$ be the
space of bounded solution on $l_+^-$ which satisfies $z(0)\in
V^\pm(\hat{N})$. Similar to (\ref{kd}), we have

\begin{lem} For the brake symmetry central configurations,  
on $l_+$, we have \bea \mathcal{K}=\mathcal{K}_+\oplus\mathcal{K}_-.\lb{k.1} \eea

\end{lem}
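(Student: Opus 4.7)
The plan is to construct a natural involution on the space of solutions along $l_+$ induced by the brake symmetry, and then decompose $\mathcal{K}$ into its $\pm 1$-eigenspaces, which will be shown to coincide with $\mathcal{K}_+$ and $\mathcal{K}_-$.

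First I would replace the involution $g$ on $[0,\mathcal{T}]$ by its counterpart on $l_+$. Observe that along $l_+$ we have $q_{l_+}(-\tau)=q_{l_+}(\tau)$ and $Q_{l_+}(-\tau)=-Q_{l_+}(\tau)$, so the apoapsis $\tau=0$ plays the role of the midpoint $\mathcal{T}/2$. Motivated by this, I would define
\begin{equation*}
g_+:\; z(\tau)\longmapsto \hat{N}\,z(-\tau),\qquad \tau\in\mathbb{R}.
\end{equation*}
A direct block computation using $N^{2}=I_k$, $N\mathbb{J}=-\mathbb{J}N$, $NR=RN$, and the parities of $q$ and $Q$ on $l_+$ shows that $\hat{N}\hat{B}(\tau)\hat{N}=\hat{B}(-\tau)$. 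Combined with $\hat{N}J=-J\hat{N}$, this yields $(g_+z)'=J\hat{B}(\tau)(g_+z)$, so $g_+$ sends solutions to solutions; clearly $g_+^{2}=\mathrm{id}$ and $g_+$ preserves $L^\infty(\mathbb{R};\mathbb{R}^{2k})$, hence restricts to an involution on $\mathcal{K}$.

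Since $g_+$ is an involution on the vector space $\mathcal{K}$, it yields the eigenspace splitting $\mathcal{K}=\mathcal{K}^{+}\oplus\mathcal{K}^{-}$, where $\mathcal{K}^{\pm}=\ker(g_+\mp I)\cap\mathcal{K}$, via the standard projectors $z\mapsto \tfrac12(z\pm g_+z)$. The remaining step is to identify $\mathcal{K}^{\pm}$ with $\mathcal{K}_{\pm}$. If $z\in\mathcal{K}^{\pm}$ then $z(0)=\pm\hat{N}z(0)$, so $z(0)\in V^{\pm}(\hat{N})$, and restricting $z$ to $l_+^{-}$ produces an element of $\mathcal{K}_{\pm}$. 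Conversely, given $z\in\mathcal{K}_{\pm}$, I would extend it to $\tau>0$ by the formula $\tilde z(\tau):=\pm\hat{N}z(-\tau)$. The intertwining relation for $\hat{B}$ shows $\tilde z$ solves the same linear system on $l_+^{+}$; the condition $z(0)\in V^{\pm}(\hat{N})$ gives $\tilde z(0)=z(0)$, so by uniqueness of the Cauchy problem the two pieces glue into a $C^1$ solution on all of $\mathbb{R}$; and boundedness on $l_+^{+}$ follows from boundedness on $l_+^{-}$ via $\|\tilde z(\tau)\|=\|z(-\tau)\|$. This extension lies in $\mathcal{K}^{\pm}$, and the two constructions are mutually inverse.

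No serious obstacle arises; the only thing to be careful about is the sign bookkeeping in verifying $\hat{N}\hat{B}(\tau)\hat{N}=\hat{B}(-\tau)$, which hinges on the precise interplay of the parities of $q,Q$ along $l_+$ with the (anti)commutation rules imposed by the brake symmetry. Once that identity is in place, the whole decomposition is a formal consequence of having an involution on a linear space compatible with the bounded solution condition.
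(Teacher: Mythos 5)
Your proposal is correct and follows essentially the same route as the paper: the paper's proof also uses the involution $z(\tau)\mapsto\hat{N}z(-\tau)$ (via the intertwining relation $\hat{N}\hat{B}(-\tau)=\hat{B}(\tau)\hat{N}$ on $l_+$) and the projectors $z_\pm(\tau)=\tfrac12(z(\tau)\pm\hat{N}z(-\tau))$ to split $\mathcal{K}$. You simply spell out the restriction/extension correspondence between the eigenspaces and $\mathcal{K}_\pm$, which the paper leaves implicit.
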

\begin{proof}
Please note that on $l_+$,
$\hat{N}\hat{B}(-\tau)=\hat{B}(\tau)\hat{N}$,  if $z(\tau)$ is one
solution then $\hat{N}z(-\tau)$ is another solution. Let
$z_\pm(\tau)=\frac{1}{2}(z(\tau)\pm\hat{N}z(-\tau))$, then
$z_\pm\in\mathcal{K}_\mp$, which implies the result.
\end{proof}

Obviously, there is standard brake symmetry on  $l_0$, that is
$\hat{N}_0=diag(I_k,-I_k) $, then $V^+(\hat{N}_0)=V_d$ and
$V^-(\hat{N}_0)=V_n$. Let $\mathcal{K}_0$ be the space of bounded
solution of $\dot{z}=J\hat{B}(\tau)z$ on $l_0$, and
$\mathcal{K}^0\pm$ be the space of bounded solution on $l_0^-$ which
satisfies $z(0)\in V^\pm(\hat{N}_0)$. We have  on $l_0$, \bea
\mathcal{K}_0=\mathcal{K}^0_+\oplus\mathcal{K}^0_-. \lb{dl0} \eea

On $l_0$, the nondegenerate condition is clear. Now, from (\ref{dl0}) and
Lemma \ref{lem4.2},  we have
\begin{prop}\lb{prop3.3} The system on $l_0$ is nondegenerate if and only if $R$
is nondegenerate.
\end{prop}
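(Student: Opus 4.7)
The plan is to reduce the proposition to the two-dimensional subsystems obtained by diagonalizing $R$ and then to invoke Lemma \ref{lem4.2}. First, I would choose an orthonormal basis in which $R=\mathrm{diag}(r_1,\dots,r_k)$ with $r_i>-1/8$; under the symplectic sum decomposition $\hat B=\hat B_{r_1}\diamond\cdots\diamond\hat B_{r_k}$ the system $\dot z=J\hat B(\tau)z$ on $l_0$ splits into $k$ independent $2$-dimensional subsystems with parameter $r_i$. Consequently the space $\mathcal{K}_0$ of bounded solutions on $l_0=\mathbb R$ is the direct sum $\bigoplus_{i=1}^k\mathcal{K}_0^{(i)}$ of the corresponding spaces for each factor, and the problem reduces to the $k=1$ case.

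Next, I would use the standard brake symmetry on $l_0$: with $\hat N_0=\mathrm{diag}(I_k,-I_k)$ we have $V^+(\hat N_0)=V_d$ and $V^-(\hat N_0)=V_n$, so (\ref{dl0}) yields $\mathcal{K}_0=\mathcal{K}^0_+\oplus \mathcal{K}^0_-$, where $\mathcal{K}^0_\pm$ are the spaces of bounded solutions on $l_0^-$ satisfying $z(0)\in V^\pm(\hat N_0)$. For each $2$-dimensional factor, Lemma \ref{lem4.2} says that for $r>-1/8$ the system on $l_0^-$ is always nondegenerate with $V_d$, hence $\mathcal{K}^0_+=0$ unconditionally, while it is nondegenerate with $V_n$ precisely when $r\neq 0$, so $\mathcal{K}^0_-=0$ iff $r\neq 0$. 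Therefore the $2$-dimensional subsystem is nondegenerate on $l_0$ iff $r\neq 0$.

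Putting everything together, the full system is nondegenerate on $l_0$ iff $r_i\neq 0$ for every $i$, i.e.\ iff $R$ is invertible, which is exactly the nondegeneracy of $R$. The only point that really requires a direct argument (rather than a citation of Lemma \ref{lem4.2}) is the implication ``$R$ degenerate $\Rightarrow \mathcal{K}_0\neq 0$'' in the $r=0$ case; this is however immediate since in that case (\ref{4.1}) decouples to $\dot y=-(Q/4)y$, and choosing $z(0)=e_2\in V_n$ gives $y\equiv 0$ and $x(\tau)=x(0)/\sqrt{\cosh(\sqrt 2\tau/2)}$, which decays to zero at both ends of $\mathbb R$ and furnishes the required bounded solution. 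I do not foresee a genuine obstacle: the only care needed is to check that the brake-symmetry decomposition and the eigenspace decomposition of $R$ are compatible, which is transparent from the block-diagonal form of $\hat B$ after diagonalization.
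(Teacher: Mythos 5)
Your proposal is correct and follows essentially the same route as the paper: the paper's (one-line) proof likewise combines the brake-symmetry decomposition $\mathcal{K}_0=\mathcal{K}^0_+\oplus\mathcal{K}^0_-$ of (\ref{dl0}) with Lemma \ref{lem4.2} after diagonalizing $R$ into $2\times2$ blocks. Your extra explicit bounded solution for $r=0$ is exactly the one the paper exhibits just before Lemma \ref{lem4.2} ($Z(0)=e_2$ with $y\equiv0$), so nothing is missing.
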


Since $R$ satisfies brake symmetry, let $V^\pm=\ker(N\mp
I_k)$, so it is obvious $dim V^\pm=\frac{k}{2}$. Let
 $\hat{V}^\pm=JV^\pm\oplus V^\pm$ be
symplectic subspace thus
$\mathbb{R}^{2k}=\hat{V}^+\oplus\hat{V}^-$. Set
$R^\pm=R|_{V^\pm}$,
and denote $ \hat{B}_\pm= \left( \begin{array}{cccc} I_\frac{k}{2} & \frac{Q}{4}I_\frac{k}{2} \\
\frac{Q}{4}I_\frac{k}{2} & -R^\pm \end{array}\right)$, we get 
$ \hat{B}=\hat{B}_+\diamond\hat{B}_-$, and the
fundamental solution satisfies
$\Psi_0=\Psi_0|_{\hat{V}^+}\diamond\Psi_0|_{\hat{V}^-} $. Furthermore 
we have \bea
i(V,W;l_0^-)=i(V|_{V_+},W|_{V_+};l_0^-)+i(V|_{V_-},W|_{V_-};l_0^-),
\lb{4.16} \eea for $V,W=V^\pm(\hat{N})$ or $V_d, V_n$.
 Please note that
\bea V^+(\hat{N})|_{\hat{V}^+}=V_d,\,\
V^+(\hat{N})|_{\hat{V}^-}=V_n,\,\ V^-(\hat{N})|_{\hat{V}^+}=V_n,\,\
V^-(\hat{N})|_{\hat{V}^-}=V_d. \lb{4.15} \eea
From (\ref{i02})-(\ref{i03}),
 we have
 \bea i_+(V^+(\hat{N}),V_d;l^-_0)+i_+(V^-(\hat{N}),V_d;l^-_0)=k, \lb{4.19} \eea 
 \bea i_+(V^+(\hat{N}),V_n;l^-_0)+i_+(V^-(\hat{N}),V_n;l^-_0)=2\phi(R), \lb{4.19.0}\eea
  \bea i_+(V^+(\hat{N}),V^+(\hat{N});l^-_0 )+i_+(V^-(\hat{N}),V^-(\hat{N});l^-_0 )=k+\phi(R).\lb{4.19.1} \eea
 \bea i_+(V^-(\hat{N}),V^+(\hat{N});l^-_0 )+i_+(V^+(\hat{N}),V^-(\hat{N});l^-_0 )=\phi(R).\lb{4.20} \eea
For the brake symmetry central configurations, we get the following
approximation theorem.
\begin{thm}\lb{th4.1} Let $\lambda_{1}(R)>-\frac{1}{8}$ be  
nondegenerate with brake symmetry property, 
and satisfying  the collision nondegenerate conditions.  
Thus we have \bea
 \lim_{e\to1}\mu(V_d,\hat{\ga}_e(\tau)V_d,\tau\in[0,\mathcal{T}])=
 k+i_-(V^-(\hat{N});l_+^-)+i_-(V^+(\hat{N});l_+^-),
 \lb{f1} \eea
 \bea
 \lim_{e\to1}\mu(V_n,\hat{\ga}_e(\tau)V_n,\tau\in[0,\mathcal{T}])=2\phi(R)+
 i_-(V^-(\hat{N});l_+^-)+i_-(V^+(\hat{N});l_+^-), \lb{f2}\eea
 \bea \lim_{e\to1}i_{-1}(\hat{\ga}_e)=\lim_{e\to1}i_1(\hat{\ga}_e)=\phi(R)+
i_-(V^-(\hat{N});l_+^-)+i_-(V^+(\hat{N});l_+^-).\lb{f3} \eea
\end{thm}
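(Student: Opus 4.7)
The strategy is to apply the approximation Theorem \ref{th.pr}(i) on the half-interval $[0,\mathcal{T}/2]$ to each summand produced by the brake-symmetric decompositions \eqref{d3}, \eqref{d4} and by the generalized Bott-type iteration formulas \eqref{d1}, \eqref{d2}, and then to combine the resulting collision indices on $l_0^-$ using the identities \eqref{4.19}, \eqref{4.19.0}, \eqref{4.19.1}, \eqref{4.20}.  Before invoking Theorem \ref{th.pr}(i) I would first check its hypotheses for $V_0,V_1\in\{V_d,V_n,V^+(\hat{N}),V^-(\hat{N})\}$: nondegeneracy on $l_0^-$ follows from Proposition \ref{prop3.3} together with the block decomposition induced by \eqref{4.15}; nondegeneracy on $l_+^-$ with respect to $V^\pm(\hat{N})$ follows from the brake splitting \eqref{k.1} and the collision nondegenerate assumption; and the transversality $V^\pm(\hat{N})\pitchfork V^-_u$ is obtained by diagonalizing $R$ in a basis compatible with $N$, so that the equation on $l_0$ splits into $k$ two-dimensional blocks in each of which $V^-_u$ is the eigenline spanned by the vector $e^-_+$ introduced in \S 3.2 and is transverse to both Dirichlet and Neumann lines.

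For \eqref{f1} I would then apply Theorem \ref{th.pr}(i) to each term of \eqref{d3} with $V_0=V_d$ and $V_1=V^\pm(\hat{N})$, obtaining
\bea
\lim_{e\to1}\mu(V^\pm(\hat{N}),\hat{\ga}_e(\tau)V_d, \tau\in[0,\mathcal{T}/2])=i_+(V^\pm(\hat{N}),V_d;l_0^-)+i_-(V^\pm(\hat{N});l_+^-). \nonumber
\eea
Summing over the two choices of sign and using \eqref{4.19} to absorb the total $i_+(V^+(\hat{N}),V_d;l_0^-)+i_+(V^-(\hat{N}),V_d;l_0^-)=k$ yields \eqref{f1}.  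The Neumann statement \eqref{f2} is obtained by the identical argument applied to \eqref{d4}, with \eqref{4.19.0} playing the role of \eqref{4.19}.

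For the Maslov-type indices \eqref{f3} I would start from the Bott-type formulas \eqref{d1}, \eqref{d2} and apply Theorem \ref{th.pr}(i) to each of the four half-interval Maslov indices that appear, with $V_0,V_1\in\{V^+(\hat{N}),V^-(\hat{N})\}$.  Summing the two diagonal terms in \eqref{d1} and invoking \eqref{4.19.1} gives, after cancelling $k$ from both sides, the expression $\lim_{e\to 1} i_1(\hat{\ga}_e)=\phi(R)+i_-(V^+(\hat{N});l_+^-)+i_-(V^-(\hat{N});l_+^-)$; summing the two off-diagonal terms in \eqref{d2} and invoking \eqref{4.20} produces the same quantity for $\lim_{e\to 1} i_{-1}(\hat{\ga}_e)$.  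The coincidence of the two limits is therefore automatic, and \eqref{f3} follows.

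The main technical obstacle is not the combinatorial bookkeeping, which is routine once Theorem \ref{th.pr}(i) is in hand, but rather the verification that the nondegeneracy and transversality hypotheses of that theorem do hold with $V_1=V^\pm(\hat{N})$ on $l_+^-$; this is the step that genuinely uses the structural compatibility between the brake involution $\hat{N}$ and the splitting \eqref{k.1} of the bounded-solution space on $l_+$, without which one cannot reduce the limit of each half-interval Maslov index to a sum of collision indices on $l_0^-$ and $l_+^-$.
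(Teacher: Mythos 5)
Your proposal is correct and follows essentially the same route as the paper: verify the nondegeneracy hypotheses via the brake splitting \eqref{k.1} and the nondegeneracy of $R$, apply Theorem \ref{th.pr}(i) to each half-interval Maslov index appearing in \eqref{d1}--\eqref{d4}, and then combine using \eqref{4.19}--\eqref{4.20}. The only difference is that you spell out the transversality check $V^\pm(\hat{N})\pitchfork V^-_u$ via the two-dimensional blocks, which the paper leaves implicit.
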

\begin{proof} Since $R$ is collision nondegenerate, from
(\ref{k.1}), the system is nondegenerate with respect to
$V^\pm(\hat{N})$ on $l^-_+$, and also from the condition that $R$ is
nondegenerate, then the system is nondegenerate with respect to
$V^\pm(\hat{N})$, $V_n$, $V_d$ on $l^-_0$. From (i) of Theorem
\ref{th.pr}, we have \bea
\lim_{e\to1}\mu(V,\hat{\ga}_e(\tau)W,\tau\in[0,\mathcal{T}/2])=i(V,W;l^-_0)+i(V;l^-_+), \nonumber
\eea for $V,W$ is $V^\pm(\hat{N})$, $V_n$, $V_d$. From
(\ref{d1}-\ref{d4}), we have \bea
\lim_{e\to1}\mu(V_d,\hat{\ga}_e(\tau)V_d,\tau\in[0,\mathcal{T}])=i_+(V^+(\hat{N}),V_d;l^-_0)
+i_+(V^-(\hat{N}),V_d;l^-_0)+i_-(V^+(\hat{N});l^-_+)+i_-(V^-(\hat{N});l^-_+),\nonumber\lb{p1}
\eea
 \bea \lim_{e\to1}\mu(V_n,\hat{\ga}_e(\tau)V_n,\tau\in[0,\mathcal{T}])=i_+(V^+(\hat{N}),V_n;
 l^-_0)+i_+(V^-(\hat{N}),V_n;l^-_0)+i_-(V^+(\hat{N});l^-_+)+i_-(V^-(\hat{N});l^-_+),\nonumber\lb{p2} \eea
 \bea \lim_{e\to1}i_1(\hat{\ga}_e)= i_+(V^+(\hat{N}),V^+(\hat{N});l^-_0 )+i_-(V^+(\hat{N});l^-_+)
 +i_+(V^-(\hat{N}),V^-(\hat{N});l^-_0 )+i_-(V^-(\hat{N});l^-_+)-k, \nonumber\lb{p3}\eea
 \bea \lim_{e\to1}i_{-1}(\hat{\ga}_e)= i_+(V^-(\hat{N}),V^+(\hat{N});l^-_0 )+i_-(V^+(\hat{N});l^-_+)
 +i_+(V^+(\hat{N}),V^-(\hat{N});l^-_0 )+i_-(V^-(\hat{N});l^-_+). \nonumber\lb{p4}\eea
Then (\ref{f1}-\ref{f3}) is from (\ref{d1}-\ref{d4}) and
(\ref{4.19}-\ref{4.20}).
\end{proof}
Comparing  of (\ref{prd}) in Theorem \ref{thm1.2} and (\ref{f1}), we
have
 \begin{cor}\label{cor4.7} 
 Let $\lambda_{1}(R)>-1/8$. If $R$  is nondegenerate, has 
 the brake symmetry property and if  the collision 
 nondegenerate condition is fulfilled, then  
 we have
 \bea i_-(V^-(\hat{N});l_+^-)+i_-(V^+(\hat{N});l_+^-)=i(V_d;l_+).\lb{lequ} \eea
% %  If $\lambda_{1}(R)>-1/8$ is nondegenerate and has 
% %  brake symmetry property, under the collision nondegenerate condition,  we have
% %  \bea i_-(V^-(\hat{N});l_+^-)+i_-(V^+(\hat{N});l_+^-)=i(V_d;l_+).\lb{lequ} \eea
\end{cor}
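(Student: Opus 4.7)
The plan is to observe that, under the hypotheses of the corollary, both Theorem \ref{thm1.2} and Theorem \ref{th4.1} furnish asymptotic formulas, as $e \to 1$, for the very same Maslov index along the essential fundamental solution with respect to the Dirichlet Lagrangian $V_d$. Theorem \ref{thm1.2} does so without exploiting any symmetry, while Theorem \ref{th4.1} uses the brake symmetry to split the collision contribution over $l_+^-$ into two $\hat N$-equivariant summands. Matching the two asymptotic formulas and cancelling the common $k$ will yield the identity (\ref{lequ}).

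First, I would verify that the hypotheses of Theorem \ref{thm1.2} are met. By (\ref{R}) the assumption $\lambda_1(R) > -1/8$ is equivalent to $\lambda_1(D^2 U|_\mathcal{E}(a_0)) > -\frac{1}{8} U(a_0)$, while nondegeneracy of $R$ and collision nondegeneracy are assumed. Invoking (\ref{prd}) and noting that its right-hand side is independent of $e$, I would then pass to the limit to obtain
\begin{equation}
\lim_{e \to 1} \mu(V_d,\gamma_e(t) V_d, t\in[0,2\pi]) = k + i(V_d; l_+). \label{plan-one}
\end{equation}
Next, using the reparametrization invariance (Property I) to change the time from $t \in [0, 2\pi]$ to $\tau \in [0, \mathcal{T}]$, together with the invariance furnished by Lemma \ref{lem2.3} (applied through the identification $\mu(V_d, \gamma V_d) = \mu(V_d \oplus V_d, Gr(\gamma))$ coming from (\ref{1r2})), one gets
\begin{equation*}
\mu(V_d,\gamma_e(t) V_d, t\in[0,2\pi]) = \mu(V_d,\hat\gamma_e(\tau) V_d, \tau\in[0,\mathcal{T}]),
\end{equation*}
exactly the setting of formula (\ref{f1}) in Theorem \ref{th4.1}.

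Finally, reading off (\ref{f1}) one has
\begin{equation}
\lim_{e \to 1} \mu(V_d,\hat\gamma_e(\tau) V_d, \tau\in[0,\mathcal{T}]) = k + i_-(V^-(\hat N); l_+^-) + i_-(V^+(\hat N); l_+^-), \label{plan-two}
\end{equation}
and comparing (\ref{plan-one}) with (\ref{plan-two}) gives (\ref{lequ}) after cancelling $k$. The only slightly delicate step is the applicability of Lemma \ref{lem2.3}: one must check that $\dim Gr(S_\alpha(\mathcal{T}) \gamma_e(\mathcal{T}) S_\alpha^{-1}(0)) \cap (V_d \oplus V_d)$ is constant along the homotopy $\alpha \in [0, e]$. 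This follows for $1-e$ sufficiently small from the transversality $\gamma_e(2\pi) V_d \pitchfork V_d$ already provided by Theorem \ref{thm1.2}, so no further work is required and the corollary follows by direct comparison of the two limits.
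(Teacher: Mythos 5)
Your proposal is correct and coincides with the paper's own argument: the paper proves Corollary \ref{cor4.7} precisely by comparing (\ref{prd}) of Theorem \ref{thm1.2} with (\ref{f1}) of Theorem \ref{th4.1} and cancelling $k$. The extra care you take over Lemma \ref{lem2.3} and the passage from $\ga_e$ to $\hat{\ga}_e$ is already absorbed into the paper's proof of Theorem \ref{thm1.2}, so nothing further is needed.
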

It is clear that $i(V_d;l_+)\geq0$.  We shall now prove 
that $i_-(V^\pm(\hat{N});l_+^-)$ is also nonnegative.
  We consider the Maslov index  on $\R^-$,
supposing the system is
 nondegenerate with respect to $V_1$, and $V_1\pitchfork V_u^-$,      then for $-\tau_0$ large enough,
  \bea \mu(V_1,V_u(\tau),
\tau\in(-\infty,0])=\mu(V_1,\ga(\tau,\tau_0)V_u(\tau_0),
\tau\in[\tau_0,0]).\nonumber\eea From the property (III), (IV), for $\ga(-s,0)$,
$s\in [0,\infty)$

\bea \mu(V_1,\ga(\tau,\tau_0)V_u(\tau_0),
\tau\in(\tau_0,0))=\mu(\ga(-s,0)V_1,V^-_u, s\in [0,\infty)), \nonumber\eea we
have \bea \mu(V_1,V_u(\tau), \tau\in(-\infty,0])=-\mu(V^-_u,\ga(-\tau)
V_1, \tau\in[0,\infty) ).\nonumber \eea By the nondegenerate condition, we
have $\lim_{T\rightarrow\infty}\ga(-T)V_1=V_s$, then we have \bea
 \mu(V_1,V_u(\tau), \tau\in(-\infty,0])= s(V_d,V^-_u,V_1,V^-_s)-\mu(V_d,\ga(-\tau)V_1, 
 \tau\in[0,+\infty)).\nonumber\eea

In the case of ERE, an easy
computation shows that \bea
\frac{d}{d\tau}\Psi_+(-\tau)=-J\hat{B}(-\tau)\Psi_+(-\tau), \lb{l1.0} \eea
where $\Psi_+(\tau)=\Psi_+(\tau,0)$ is the fundamental solution on $l_+$.
If the central configuration  satisfies the brake symmetry, that is
$\hat{N}\hat{B}(-\tau)=\hat{B}(\tau)\hat{N}$, then, direct computation
shows that \bea\Psi_+(-\tau)=\hat{N}\Psi_+(\tau)\hat{N},
\tau\in[0,\infty). \nonumber\eea So we have \bea
\mu(V_d,\Psi_+(-\tau)V_1)=\mu(V_d,\hat{N}\Psi_+(\tau)\hat{N} V_1,
\tau\in[0,\infty) ). \nonumber\eea Please note that if
$-\hat{B}(\tau)|_V<0$ for $t\in \mathbb{R}^+$, then the crossing form $
\Gamma(\Lambda(t),V_d,t)<0$, we have \bea
\mu(V_d,\Psi_+(-\tau)V_1)=-\sum_{0<\tau_j<\infty} \nu(\tau_j)\leq0,\nonumber\eea
where $\nu(\tau_j)=\dim V_d\cap \Psi_+(-\tau_j)V_1$.

Please note that,  in the case $V_1=V^j_d\oplus V^{(k-j)}_n$,  
where $V^j_d\in V_d, V^{(k-j)}_n\in V_n$ ,
from
(\ref{s4}),  we have \bea s(V_d,V^-_u,V_1,V^-_s)=0.\nonumber \eea
Since $V^\pm(\hat{N})$ is a direct sum of Dirichlet Lagrangian subspace
and Neumann Lagrangian subspace,  by (\ref{4.15}),  we have
 \begin{lem}\label{lem4.9} On $l^-_+$, we have 
 \bea i_-(V^\pm(\hat{N}))=\sum_{0<\tau\pm_j<\infty}\nu(\tau^\pm_j)\geq0, \lb{pos}
 \eea where $\nu(\tau^\pm_j)=\dim V_d\cap \Psi_+(-\tau^\pm_j)V^\pm(\hat{N})$.
\end{lem}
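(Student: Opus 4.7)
The plan is to combine the half-line Maslov index formula derived just above the statement of Lemma \ref{lem4.9} with the symplectic splitting of $V^\pm(\hat{N})$ furnished by the brake involution. First I would apply the identity
\[
i_-(V_1; l_+^-) \;=\; s(V_d, V_u^-, V_1, V_s^-) \;-\; \mu\!\left(V_d,\, \Psi_+(-\tau) V_1,\, \tau\in[0,\infty)\right)
\]
(obtained in the preceding discussion from the $\R^-$-version of the Maslov index and the symplectic invariance under $\gamma(-s,0)$) to $V_1 = V^\pm(\hat{N})$. By (\ref{4.15}) and the symplectic decomposition $\R^{2k}=\hat{V}^+\oplus\hat{V}^-$, each of $V^+(\hat{N})$ and $V^-(\hat{N})$ is a symplectic direct sum of a Dirichlet Lagrangian of one factor with a Neumann Lagrangian of the other, i.e.\ of the form $V_d^j\oplus V_n^{k-j}$. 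By additivity of the Hörmander index under symplectic direct sums, together with the two computations $s(V_d,V_u^-,V_d,V_s^-)=s(V_d,V_u^-,V_n,V_s^-)=0$ recorded in (\ref{s4}), this gives $s(V_d,V_u^-,V^\pm(\hat{N}),V_s^-)=0$.

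Next I would evaluate $\mu(V_d,\Psi_+(-\tau)V^\pm(\hat{N}),\tau\in[0,\infty))$ via the crossing form. Writing $\gamma(\tau)=\Psi_+(-\tau)$ and using the formula $(-\gamma^TJ\dot\gamma v,v)$ for the crossing form, a direct computation shows that at any crossing $\tau_j$ the form equals $-\langle \hat{B}(-\tau_j)w,w\rangle$ for $w\in \Psi_+(-\tau_j)V^\pm(\hat{N})\cap V_d$. Since $\hat{B}(\tau)|_{V_d}=I_k$ is positive definite along the whole orbit $l_+$, this form is strictly negative on the entire intersection, so every crossing is regular and contributes $-\nu(\tau_j)$ in formula (\ref{1.3c}). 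The endpoint $\tau=0$ produces $m^+=0$ because the form is negative there, and the limit $\tau\to+\infty$ gives $\Psi_+(-\tau)V^\pm(\hat{N})\to V_s^-$, which is transverse to $V_d$ by the collision nondegeneracy hypothesis (equivalently, one makes the path have only regular interior crossings by a small compactly-supported perturbation). Hence
\[
\mu\!\left(V_d,\, \Psi_+(-\tau) V^\pm(\hat{N}),\, \tau\in[0,\infty)\right) \;=\; -\sum_{0<\tau_j^\pm<\infty}\nu(\tau_j^\pm).
\]

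Combining the two computations gives
\[
i_-(V^\pm(\hat{N});l_+^-) \;=\; 0 - \Bigl(-\sum_{0<\tau_j^\pm<\infty}\nu(\tau_j^\pm)\Bigr) \;=\; \sum_{0<\tau_j^\pm<\infty}\nu(\tau_j^\pm)\;\geq\;0,
\]
which is the asserted identity together with its non-negativity. The step that requires the most care is the endpoint bookkeeping in (\ref{1.3c}): one must confirm that neither $\tau=0$ nor the $\tau\to\infty$ limit introduces an unwanted $\pm m^{\pm}$ contribution. Both reductions hinge entirely on the uniform positivity $\hat{B}|_{V_d}>0$ (so all crossings have the same sign) and on the collision nondegeneracy, which ensures $V_s^-\pitchfork V_d$ in the limit—without that hypothesis the clean identity would acquire a boundary correction.
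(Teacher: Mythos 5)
Your proposal is correct and follows essentially the same route as the paper: the identity $i_-(V_1)=s(V_d,V_u^-,V_1,V_s^-)-\mu(V_d,\Psi_+(-\tau)V_1)$, the vanishing of the H\"{o}rmander term via the Dirichlet--Neumann splitting of $V^\pm(\hat{N})$ and \eqref{s4}, and the negativity of the crossing form coming from $\hat{B}|_{V_d}>0$ are exactly the steps used in the text preceding the lemma. The only slight inaccuracy is attributing the transversality $V_s^-\pitchfork V_d$ at infinity to collision nondegeneracy (it is automatic from the explicit form of $V_s^-$; what nondegeneracy supplies is the convergence $\Psi_+(-\tau)V_1\to V_s^-$), but this does not affect the argument.
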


 \section{Applications}
We give applications for the collision index. In subsection \S 5.1
we consider the ERE of minimal central configurations and prove  some
hyperbolicity results.  At \S 5.2, we study the stability of Euler
orbits.

 \subsection{Minimal central configurations}

In order to give a hyperbolic criteria, we first review some
results on Morse index.  Consider the linear Sturm systems
\begin{eqnarray}
-\frac{d}{dt}(P(t)\dot{y}+Q(t)y)+Q^T(t)\dot{y}+R(t)y=0,\lb{n2.7}
\end{eqnarray}
as $P,R,Q$ are continuous path of matrices in $\mathbb{R}^{2n}$
and satisfy  $P(t)>0$, $R(t)=R(t)^T$.  This linear Sturm system
(\ref{n2.7}) corresponds to the linear Hamiltonian system \bea
\dot{z}=JB(t)z, z\in {\R}^{2n},\lb{n2.8}\eea where \bea B(t)=\left(
\begin{array}{cccc} P^{-1}(t) & -P^{-1}(t) Q(t)\cr -Q^T P^{-1}(t) &
Q^T(t)P^{-1}(t)Q(t)-R(t)\end{array}\right). \lb{n2.81} \eea Let \bea
L(t,x(t),\dot{x}(t))={\frac12}((P\dot{x}+Qx)\cdot\dot{x}+Q^T\dot{x}\cdot
x+Rx\cdot x),\lb{nn3.1} \eea
 and ${\cal F}(x)=\int_0^{T}\{L(t,x(t),\dot{x}(t))\}dt$ on 
 $W^{1,2}([0,T],\mathbb  C^n)$.  We denote  \bea D(\omega,T)=\{x\in W^{1,2}([0,T],\mathbb{C}^n),
 x(0)=\omega x(T) \}, \omega\in {\mathbb  U}. \nonumber \eea
Obviously, \bea W^{1,2}_0([0,T],\R^n)\subset D(\omega,T)\subset
W^{1,2}([0,T],\mathbb  C^n).\nonumber \eea

Let $\mathcal{L}={\cal
F}''(0):=-\frac{d}{dt}(P(t)\frac{d}{dt}+Q(t))+Q^T(t)\frac{d}{dt}+R(t)$,
and  more precisely, set $\mathcal{L}_n, \mathcal{L}_\omega,
\mathcal{L}_d$ to be the operator with form $\mathcal{L}$ under the
Neumann, $\omega$ and Dirichlet boundary conditions separately. Let
$\lambda_k(\mathcal{L})$ be the $k$-th eigenvalue of $\mathcal{L}$.
 From the monotonicity property of the eigenvalues \cite{CH},
we have \bea \lambda_k(\mathcal{L}_n)\leq
\lambda_k(\mathcal{L}_\omega)\leq \lambda_k(\mathcal{L}_d).
\lb{eig}\eea Let $\phi$ be the Morse index of $\mathcal{L}$ which is
defined to be the total number of negative eigenvalues, which is
equal  to  the dimension of maximum negative definite subspace of ${\cal
F}$. Let $\phi_d, \phi_\omega, \phi_n$ be the Morse index of
$\mathcal{L}_d,\mathcal{L}_\omega,\mathcal{L}_n$ separately.  From
(\ref{eig}), we have \bea \phi_d\leq \phi_\omega\leq \phi_n. \nonumber\eea

\begin{prop}\label{prop4.1}  The system is hyperbolic  if $\phi_n=\phi_d$ and $\mathcal{L}_n$ is 
nondegenerate.  \end{prop}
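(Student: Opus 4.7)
The plan is to derive hyperbolicity as a contradiction, using only the eigenvalue comparison already recorded in \eqref{eig} together with the elementary fact that $\mathcal{L}_\omega$ is degenerate if and only if $\omega$ is an eigenvalue of the monodromy matrix $\mathcal{M}=\gamma(T)$. No Maslov machinery is needed beyond what has already been set up.

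First I would unfold the hypothesis $\phi_n=\phi_d$. For every $\omega\in\mathbb{U}$, the chain $\phi_d\leq \phi_\omega\leq\phi_n$ forces $\phi_\omega=\phi_n$. Thus the number of strictly negative eigenvalues of $\mathcal{L}_\omega$ is exactly $\phi_n$ for every $\omega\in\mathbb{U}$. In particular, after ordering $\lambda_1(\mathcal{L}_\omega)\leq\lambda_2(\mathcal{L}_\omega)\leq\cdots$ one has $\lambda_{\phi_n}(\mathcal{L}_\omega)<0\leq\lambda_{\phi_n+1}(\mathcal{L}_\omega)$.

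Next I would argue by contradiction that $\mathcal{L}_\omega$ is nondegenerate for \emph{every} $\omega\in\mathbb{U}$. Suppose to the contrary that $\mathcal{L}_{\omega_0}$ is degenerate for some $\omega_0\in\mathbb{U}$; then $0$ is an eigenvalue, and since the first $\phi_n$ eigenvalues are already strictly negative the zero eigenvalue must appear at position $\phi_n+1$, i.e.\ $\lambda_{\phi_n+1}(\mathcal{L}_{\omega_0})=0$. Applying the left inequality of \eqref{eig} gives
\begin{equation*}
\lambda_{\phi_n+1}(\mathcal{L}_n)\leq \lambda_{\phi_n+1}(\mathcal{L}_{\omega_0})=0.
\end{equation*}
On the other hand, since $\mathcal{L}_n$ has Morse index exactly $\phi_n$, one has $\lambda_{\phi_n+1}(\mathcal{L}_n)\geq 0$, and the assumed nondegeneracy of $\mathcal{L}_n$ upgrades this to $\lambda_{\phi_n+1}(\mathcal{L}_n)>0$. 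This contradiction shows that $\mathcal{L}_\omega$ is nondegenerate for all $\omega\in\mathbb{U}$.

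Finally I would convert this into hyperbolicity. A nonzero $y\in\ker\mathcal{L}_\omega$ is precisely a solution of the Hamiltonian system \eqref{n2.8} satisfying $z(T)=\omega z(0)$, i.e.\ $\gamma(T)z(0)=\omega z(0)$, so $\omega$ is an eigenvalue of $\mathcal{M}$. Having ruled out such $\omega\in\mathbb{U}$, we conclude that $\mathcal{M}$ has no eigenvalues on the unit circle, i.e.\ the system is hyperbolic. There is no real obstacle here; the only subtlety to get right is the bookkeeping that $\phi_\omega$ counts \emph{strictly} negative eigenvalues while the nullity $\nu_\omega=\dim\ker\mathcal{L}_\omega$ records the $0$-eigenspace, so that a degeneracy forces $\lambda_{\phi_n+1}(\mathcal{L}_\omega)=0$ rather than merely $\leq 0$.
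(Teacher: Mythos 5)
Your proposal is correct and follows essentially the same route as the paper: both rest on the eigenvalue monotonicity \eqref{eig}, use $\phi_n=\phi_d$ to pin the Morse index of every $\mathcal{L}_\omega$ at $k_0=\phi_n$, and use the nondegeneracy of $\mathcal{L}_n$ to force $\lambda_{k_0+1}(\mathcal{L}_\omega)\geq\lambda_{k_0+1}(\mathcal{L}_n)>0$, whence $\mathcal{L}_\omega$ is nondegenerate for all $\omega\in\mathbb{U}$ and the monodromy has no unit-circle eigenvalues. The only cosmetic difference is that you phrase the middle step as a contradiction while the paper states the two strict inequalities $\lambda_{k_0}(\mathcal{L}_\omega)<0<\lambda_{k_0+1}(\mathcal{L}_\omega)$ directly.
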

\begin{proof} Please note that the system is hyperbolic,  that is,  $\sigma(\ga(T))\cap
\mathbb{U}=\emptyset$ is equivalent to $\mathcal{L}_\omega$ which is
nondegenerate for  $\forall\omega\in\mathbb{U}$. Supposing
$k_0=\phi_n=\phi_d$,  we have
$\lambda_{k_0}(\mathcal{L}_\omega)<0$ by  (\ref{eig}). On the other hand,
$\mathcal{L}_n$ is nondegenerate which implies
$\lambda_{k_0+1}(\mathcal{L}_n)>0$ and hence
$\lambda_{k_0+1}(\mathcal{L}_\omega)>0$, which implies the result.
\end{proof}
Please note that $\phi_n=0$ implies $\phi_d=0$, so we have
\begin{cor}\label{cor5.0}  The system is hyperbolic  if $\mathcal{L}_n>0$.
 \end{cor}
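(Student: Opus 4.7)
The plan is to deduce this directly from Proposition \ref{prop4.1} together with the monotonicity inequality \eqref{eig}, exploiting the remark made immediately before the corollary. The hypothesis $\mathcal{L}_n>0$ says that every eigenvalue of $\mathcal{L}_n$ is strictly positive, which gives two things at once: first, $\mathcal{L}_n$ is nondegenerate (it has no zero eigenvalue); second, by the definition of the Morse index as the total number of negative eigenvalues, $\phi_n=0$.

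Next I would use the monotonicity of eigenvalues in \eqref{eig}, namely $\lambda_k(\mathcal{L}_n)\leq \lambda_k(\mathcal{L}_d)$ for every $k$. Since every $\lambda_k(\mathcal{L}_n)$ is strictly positive by assumption, the same holds for every $\lambda_k(\mathcal{L}_d)$, hence $\phi_d=0$ as well. This is exactly the implication $\phi_n=0\Rightarrow\phi_d=0$ highlighted in the paragraph preceding the statement.

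Finally, I would invoke Proposition \ref{prop4.1} with $\phi_n=\phi_d=0$ and $\mathcal{L}_n$ nondegenerate, to conclude that the underlying linear Hamiltonian system \eqref{n2.8} is hyperbolic. There is no real obstacle here since every ingredient has been established earlier in the subsection; the corollary is essentially a reformulation of Proposition \ref{prop4.1} under the strongest possible sign condition on $\mathcal{L}_n$, and the only thing to check is that positivity simultaneously delivers nondegeneracy and the vanishing of both Morse indices.
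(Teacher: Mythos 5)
Your proposal is correct and follows exactly the paper's route: the paper derives the corollary from Proposition \ref{prop4.1} via the one-line observation that $\phi_n=0$ implies $\phi_d=0$ (which is \eqref{eig}), with positivity of $\mathcal{L}_n$ supplying both the vanishing of $\phi_n$ and the nondegeneracy. Nothing is missing.
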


From Theorem 1.2 of \cite{HS} or P172 of \cite{Lon4}, we list  the
relation of Morse index and Maslov index below.
\begin{lem}\label{lem5.5} Let $\ga$ be the fundamental solution of (\ref{n2.8}), then we have
 \bea
\phi_{\omega}(\mathcal{L})=i_{\omega}(\gamma), \ \
\nu_{\omega}(\mathcal{L})=\nu_{\omega}(\gamma),\,\ \forall\omega\in\mathbb U, \lb{inr1} \eea \bea
\phi_{d}(\mathcal{L})+n=\mu(V_d,\ga V_d),\ \
\phi_{n}(\mathcal{L})=\mu(V_n,\ga V_n).\lb{inr2} \eea

\end{lem}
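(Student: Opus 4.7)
The plan is to combine a direct kernel identification with a spectral-flow / Maslov-crossing matching argument, a standard strategy implemented rigorously in \cite{HS, Lon4}.

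First, under the Legendre transform $z(t) := (P(t)\dot x(t)+Q(t)x(t),\, x(t))^T$, classical solutions of $\mathcal{L}x=0$ biject with solutions of $\dot z = JB(t)z$ for $B$ as in (\ref{n2.81}). The boundary condition $x\in D(\omega,T)$ together with the natural conjugate-momentum match translates to $z(T)=\omega z(0)$, i.e.\ $z(0)\in\ker(\gamma(T)-\omega I_{2n})$, while Dirichlet (resp.\ Neumann) becomes $z(0),z(T)\in V_d$ (resp.\ $V_n$). The nullity identities $\nu_\omega(\mathcal{L})=\nu_\omega(\gamma)$ and their $d/n$ analogues follow immediately.

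For the index identities, I would pick a smooth homotopy $\mathcal{L}^s$, $s\in[0,1]$, from $\mathcal{L}^0=\mathcal{L}$ to a reference operator $\mathcal{L}^1=\mathcal{L}_{\mathrm{ref}}$ (for instance $-d^2/dt^2$ with $P=I,Q=0,R=0$) for which both the Morse and Maslov indices can be read off. Correspondingly one obtains a path $\gamma^s$ of fundamental solutions. At a regular crossing $s_*$ -- a value where $\mathcal{L}^{s_*}_\omega$ has a non-trivial kernel, equivalently where $\omega\in\sigma(\gamma^{s_*}(T))$ -- the Morse index jumps by $-\,\mathrm{sign}\,\langle \partial_s\mathcal{L}^{s_*}u,u\rangle_{L^2}$ and, by formula (\ref{1.3c}), the Maslov-type index jumps by the signature of the symplectic crossing form $(-\gamma^{s_*}(T)^T J\partial_s\gamma^{s_*}(T)v,v)$ on the $\omega$-eigenspace. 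Differentiating the Duhamel formula $\partial_s\gamma^s(T)=\int_0^T\gamma^s(T,t)\,J\,\partial_sB_s(t)\,\gamma^s(t,0)\,dt$ and unwinding the Legendre transform shows that these two crossing forms coincide on the common kernel. Thus the spectral flow of $\mathcal{L}^s_\omega$ equals the increment of $i_\omega(\gamma^s)$, and comparing with the reference endpoint yields (\ref{inr1}). The same homotopy, now compared against the Lagrangian paths $\gamma^s V_d$ and $\gamma^s V_n$, gives (\ref{inr2}); the offset $+n$ in the Dirichlet case reflects $\mu(V_d,\gamma_{\mathrm{ref}} V_d)=n$ (visible from Lemma \ref{lem2.2} via $i_1(\gamma)+n=\mu(\Delta,Gr(\gamma))$ at the trivial reference), whereas $\mu(V_n,\gamma_{\mathrm{ref}} V_n)=0$.

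The main technical obstacle is the matching of the two crossing forms: one must verify that under the Legendre identification a zero mode $u$ of $\mathcal{L}^{s_*}_\omega$ corresponds to $v = z(0) \in \ker(\gamma^{s_*}(T) - \omega I)$ in such a way that the $L^2$-Hessian pairing $\langle \partial_s\mathcal{L}^{s_*}u,u\rangle$ equals, up to sign, the symplectic pairing $(-\gamma^{s_*}(T)^T J\partial_s\gamma^{s_*}(T)v,v)$. This requires a careful integration by parts using $P=P^T$, $R=R^T$ and the boundary condition defining $D(\omega,T)$, and is precisely the content of the saddle-point reduction of \cite{Lon4} and of Theorem~1.2 of \cite{HS}; once established, the lemma follows by transport along the homotopy.
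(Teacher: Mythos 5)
Your outline is correct and is essentially the standard argument: the paper itself offers no proof of this lemma, simply quoting it from Theorem~1.2 of \cite{HS} and p.~172 of \cite{Lon4}, and your Legendre-transform identification of the kernels together with the spectral-flow/crossing-form matching along a homotopy is exactly the argument carried out in those references. The one technical step you correctly flag as the crux --- that the $L^2$ Hessian crossing form equals the symplectic crossing form on the common kernel --- is precisely what you (and the paper) delegate to \cite{HS,Lon4}, so nothing is missing relative to the paper's treatment.
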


{\bf Proof of Theorem \ref{thm1.3}.} Please note that the central configuration $a_0$ is 
non degenerate minimizer which implies
$\lambda_{1}(R)>0$, i.e. $\phi(R)=0$ and $R$ is nonsingular.  
Under the collision nondegenerate condition,  from (\ref{prd})-(\ref{prn}),  for
 $1-e$ small enough
 \bea \mu(V_d,\ga_e(t)V_d,t\in[0,2\pi])-k=\lim_{e\to1}\mu(V_n,\ga_e(t)V_n,  t\in[0,2\pi]).\lb{lequ.0} \eea
From (\ref{inr2}), we have $\phi_d=\phi_n$. The nondegenerate of
$\mathcal{L}_n $ is from Theorem \ref{thm1.2},  so  the result is from Proposition \ref{prop4.1}. $\square$

A typical example is the Lagrangian equilateral triangle central
configuration. It is obvious
that $R=diag((3+\sqrt{9-\bb})/2,(3-\sqrt{9-\bb})/2 ) $  satisfies the
brake symmetry with $N=diag(1,-1)$. This fact had been used to
decompose  the $-1$-degenerate curves in \cite{HLS}. It is proved
in \cite{HLS} that for any $\beta\in(0,9]$, $1-e$ small enough,
$\mathcal{L}_n$ is positive, and consequently  hyperbolic. By
the approximation formula (\ref{f2}) and the nonnegative property
(\ref{pos}), we have
\begin{prop}\label{prop4.3}  If the   Lagrangian  central configurations is collision nondegenerate, 
then $i_-(V^-(\hat{N});l^-_+)=i_-(V^+(\hat{N});l^-_+)=0$,
and hence $i(V_d;l_+)=0$. \end{prop}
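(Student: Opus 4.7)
The plan is to combine the strong positivity result from \cite{HLS} with the approximation formula \eqref{f2} and the nonnegativity established in Lemma \ref{lem4.9}. First I would verify that all the standing hypotheses of Theorem \ref{th4.1} apply to the Lagrangian case: the normalized Hessian $R=\mathrm{diag}\bigl((3+\sqrt{9-\beta})/2,(3-\sqrt{9-\beta})/2\bigr)$ is nondegenerate and satisfies $\lambda_1(R)>0>-1/8$ for every $\beta\in(0,9]$, it has the brake symmetry with $N=\mathrm{diag}(1,-1)$, and collision nondegeneracy is assumed. In particular $\phi(R)=0$ since both eigenvalues of $R$ are strictly positive.

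Next I would recall the key input from \cite{HLS}: for any $\beta\in(0,9]$ and every $e$ with $1-e$ small enough, the Neumann operator $\mathcal{L}_n$ associated to the essential part of the ERE is positive. By Lemma \ref{lem5.5} this yields
\[
 \mu(V_n,\gamma_e(t)V_n, t\in[0,2\pi])=\phi_n(\mathcal{L})=0,
\]
and by Lemma \ref{lem2.3} (applied to $V_n\oplus V_n$, which is clearly preserved by the conjugation defining $\hat{\gamma}_e$) the same vanishing holds for $\hat{\gamma}_e$:
\[
 \mu(V_n,\hat{\gamma}_e(\tau)V_n, \tau\in[0,\mathcal{T}])=0.
\]

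Then I would take the limit $e\to 1$ and insert this into \eqref{f2} of Theorem \ref{th4.1}. Because $\phi(R)=0$, the formula collapses to
\[
 0=i_-(V^-(\hat{N});l_+^-)+i_-(V^+(\hat{N});l_+^-).
\]
By Lemma \ref{lem4.9} each of the two summands is a nonnegative integer (a sum of intersection multiplicities), so both must vanish individually. Finally, Corollary \ref{cor4.7} gives
\[
 i(V_d;l_+)=i_-(V^-(\hat{N});l_+^-)+i_-(V^+(\hat{N});l_+^-)=0,
\]
which is the claimed conclusion.

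The only delicate point in this plan is the very first step: confirming that the positivity of $\mathcal{L}_n$ established in \cite{HLS} applies exactly to the (unconjugated) operator that corresponds to the Maslov index entering \eqref{f2} via Lemma \ref{lem2.3}. This is routine but worth checking, because the approximation theorem is stated in terms of $\hat{\gamma}_e$ while the Morse-index identity of Lemma \ref{lem5.5} is stated for $\gamma_e$; once Lemma \ref{lem2.3} is invoked (using that $V_n$ is transverse to $\gamma_e(\mathcal{T})V_n$ throughout the homotopy by the nondegeneracy of $\mathcal{L}_n$), the two Maslov indices coincide and the rest of the argument is just bookkeeping with nonnegative integers.
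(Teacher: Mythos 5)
Your proposal is correct and follows essentially the same route as the paper: the paper derives Proposition \ref{prop4.3} exactly by combining the positivity of $\mathcal{L}_n$ from \cite{HLS} (hence $\phi_n=0$ and $\mu(V_n,\ga_e V_n)=0$) with the approximation formula \eqref{f2}, the fact that $\phi(R)=0$, the nonnegativity \eqref{pos}, and Corollary \ref{cor4.7}. Your extra remark about reconciling $\ga_e$ with $\hat{\ga}_e$ via Lemma \ref{lem2.3} is a point the paper leaves implicit, but it does not change the argument.
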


We continue by  studying the case of strong minimizer, so please note that
 a central configuration is strong minimizer if it satisfies
$\lambda_{1}(R)>1$. The next lemma is important in the proof of Theorem
\ref{thm1.4}.

\begin{lem}\label{lem5.6}\textbf{(See \cite{HO},Proposition 2.)}
If $\delta>1$,$\omega\in \U$, then $\mathcal{A}(e,\delta)=-\frac{d^2}{dt^2}-1+\frac{\delta}{1+e\cos(t)}$ 
is positive operator for all
$e\in[0,1)$ on its domain $\bar{D}_{1}(\omega,2\pi)$, where $\bar{D}_{n}(\omega,2\pi)=\{y\in W^{2,2}([0,2\pi], 
\mathbb  C^{n})| y(2\pi)=\omega y(0),\dot{y}(2\pi)=\omega \dot{y}(0)\}$.
\end{lem}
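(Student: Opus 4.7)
The plan is to establish positivity of $\mathcal{A}(e,\delta)$ via a Picone-type rearrangement, based on a strictly positive $2\pi$-periodic element of the kernel of the borderline operator $\mathcal{A}(e,1)$, after which monotonicity in $\delta$ supplies the strict inequality for $\delta>1$.

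First I would single out the multiplier $\phi(t):=1+e\cos t$ (which, up to a constant factor, is the reciprocal of the Keplerian radius). It is strictly positive on $[0,2\pi]$, $2\pi$-periodic, and satisfies $\dot\phi(0)=\dot\phi(2\pi)=0$. A direct check shows $\mathcal{A}(e,1)\phi=0$: from $\ddot\phi=-e\cos t=1-\phi$ one gets $-\ddot\phi-\phi+\phi/(1+e\cos t)=(\phi-1)-\phi+1=0$.

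Next I would set $v:=y/\phi$ for $y\in\bar D_1(\omega,2\pi)$. Since $\phi$ is smooth, strictly positive and $2\pi$-periodic, $v$ inherits the Floquet condition and again lies in $\bar D_1(\omega,2\pi)$. Expanding $\dot y=\dot\phi\,v+\phi\,\dot v$, using $v\,\overline{\dot v}+\bar v\,\dot v=(|v|^2)'$, and integrating the cross term by parts (the boundary contribution drops out because $\dot\phi(0)=\dot\phi(2\pi)=0$, independently of $\omega$), the identity $\ddot\phi=1-\phi$ yields
$$\int_0^{2\pi}|\dot y|^2\,dt=\int_0^{2\pi}\phi^2|\dot v|^2\,dt+\int_0^{2\pi}(\phi^2-\phi)|v|^2\,dt.$$
Combining this with $|y|^2=\phi^2|v|^2$ and $|y|^2/(1+e\cos t)=\phi|v|^2$ and cancelling common terms, the quadratic form collapses to
$$\langle\mathcal{A}(e,\delta)y,y\rangle=\int_0^{2\pi}\phi^2|\dot v|^2\,dt+(\delta-1)\int_0^{2\pi}\phi|v|^2\,dt.$$

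Both summands on the right hand side are nonnegative, and for $\delta>1$ the second one is strictly positive whenever $y\not\equiv 0$ (equivalently $v\not\equiv 0$), because $\phi>0$ on $[0,2\pi]$. This gives $\langle\mathcal{A}(e,\delta)y,y\rangle>0$ on $\bar D_1(\omega,2\pi)\setminus\{0\}$, uniformly in $e\in[0,1)$ and $\omega\in\U$, which is the claimed positivity. The only substantive point is identifying the correct multiplier $\phi$; after that the argument is a mechanical computation, and the endpoint identities $\dot\phi(0)=\dot\phi(2\pi)=0$ are exactly what make the integration by parts valid uniformly in the Floquet parameter $\omega$, with no further case analysis.
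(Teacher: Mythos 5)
Your argument is correct, and it is worth noting at the outset that the paper itself contains no proof of this lemma: it is quoted verbatim from \cite{HO} (Proposition 2), so there is no in-paper argument to compare against. Your Picone-type computation is a clean, self-contained substitute. The key identities all check out: $\phi=1+e\cos t$ satisfies $\ddot\phi=1-\phi$, hence $\mathcal{A}(e,1)\phi=0$; the substitution $y=\phi v$ preserves the Floquet domain $\bar D_1(\omega,2\pi)$ because $\phi$ is smooth, bounded below by $1-e>0$, $2\pi$-periodic, and $\dot\phi(0)=\dot\phi(2\pi)=0$; the boundary terms in both integrations by parts vanish (the one for $\langle\mathcal{A}y,y\rangle$ because $|\omega|=1$, the one for the cross term because $\dot\phi$ vanishes at the endpoints and $|v|^2$ is $2\pi$-periodic); and the resulting identity
$$\langle\mathcal{A}(e,\delta)y,y\rangle=\int_0^{2\pi}\phi^2|\dot v|^2\,dt+(\delta-1)\int_0^{2\pi}\phi|v|^2\,dt$$
is verified by direct expansion, with the second term strictly positive for $\delta>1$ and $y\not\equiv0$. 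Since $\mathcal{A}(e,\delta)$ is self-adjoint with compact resolvent on $\bar D_1(\omega,2\pi)$, strict positivity of the quadratic form on the domain is equivalent to positivity of the operator, so the conclusion follows. Your proof also makes transparent the borderline role of $\delta=1$, where $\mathcal{A}(e,1)\geq0$ with kernel spanned by $\phi$ at $\omega=1$, which is consistent with the sharpness of the hypothesis $\delta>1$.
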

Now we can proof Theorem \ref{thm1.4}.
\begin{proof}
For the ERE, we have
 \bea \mathcal{L}=-\frac{d^{2}}{dt^2}I_{k}-2\mathbb{J}_{k/2}\frac{d}{dt}+\frac{R}{1+e\cos(t)}. \nonumber\eea
then $\mathcal{L}>\hat{\mathcal{L}}:=-\frac{d^{2}}{dt^2}I_{k}-2\mathbb{J}_{k/2}\frac{d}{dt}+
\frac{\lambda_1(R)I_k}{1+e\cos(t)} $. 
We only need to show $\hat{\mathcal{L}}>0$ with domain $\bar{D}_{k}(\omega,2\pi)$
for any $\omega\in \U$.
   Let $\mathcal{R}(t)=\left(
                                                                                                                              \begin{array}{cc}
                                                                                                                                \cos(t)I_k & -\sin(t)I_k \\
                                                                                                                                \sin(t)I_k & \cos(t)I_k \\
                                                                                                                              \end{array}
                                                                                                                            \right)
 $, then
 \bea \mathcal{R}\hat{\mathcal{L}}\mathcal{R}^T=-\frac{d^{2}}{dt^2}I_{k}-I_k+
 \frac{\lambda_1(R)I_k}{1+e\cos(t)}. \nonumber    \eea
Since $\lambda_{1}(R)>1$,we get the result  from Lemma \ref{lem5.6}.
\end{proof}
It is obvious that for the strong minimizer if it is collision
nondegenerate, the approximation theorem implies $ i(V_d,l_+)=0$.

In the special case, the ERE of Lagrangian central configurations is
hyperbolic for $\beta>8$, $e\in[0,1)$,  which had proved directly in
\cite{Ou}. As another  example, we consider the $1+3$ central
configurations, and let $m_1=m_2=m_3=1$ and $m_0=m_c$, the essential
part $R=I_4+\mathcal{D}$ with   \bea \mathcal
{D}=\left(\begin{array}{ccccc}\frac{1}{2}
& 0 & -\frac{3\sqrt{3u(3+m_c)}}{2(1+\sqrt{3}m_c)} & 0\\
                       0 & \frac{1}{2} & 0 &  \frac{3\sqrt{3m_c(3+m_c)}}{2(1+\sqrt{3}m_c)} \\
                      -\frac{3\sqrt{3m_c(3+m_c)}}{2(1+\sqrt{3}m_c)} & 0&\frac{\sqrt{3}
                      (3+m_c)}{2(1+\sqrt{3}m_c)} & 0 \\
                       0 &  \frac{3\sqrt{3m_c(3+m_c)}}{2(1+\sqrt{3}m_c)} &0 &
                       \frac{\sqrt{3}(3+m_c)}{2(1+\sqrt{3}m_c)}
                       \end{array}\right),       \eea
                      was  computed in \cite{MS}. Please note that
                       there is a typo in (39) of
                       \cite{MS}.
Let $$\mathcal {D}_\mp=\left(\begin{array}{cc}\frac{1}{2} & \mp\frac{3\sqrt{3m_c(3+m_c)}}{2(1+\sqrt{3}m_c)}  \\
                                    \mp\frac{3\sqrt{3m_c(3+m_c)}}{2(1+\sqrt{3}m_c)} & \frac{\sqrt{3}(3+m_c)}
                                    {2(1+\sqrt{3}m_c)}
                                    \end{array}\right),$$
 then        $\mathcal {D}=\mathcal {D}_-\diamond\mathcal {D}_+$.
  Obviously the eigenvalues $\lambda_\pm$ of $\mathcal {D}_\mp$ is same,  direct
  computation shows that
\bea
\lambda_\pm(m_c)=\frac{1}{2}(1+\sqrt{3}m_c)^{-1}\[\sqrt{3}m_c+\frac{3\sqrt{3}+1}{2}
\pm\(27(m_c^2+3m_c)+\(\frac{3\sqrt{3}-1}{2}\)^2\)^{\frac{1}{2}}\].
\eea Obviously $\lambda_+(m_c)>0$ for $m_c\in[0,+\infty)$. Let
$m_c(0)=\frac{\sqrt{3}}{24}$, $m_c(-1)=\frac{81+64\sqrt{3}}{249}$,
then $\lambda_-(m_c(0))=0$ and $\lambda_-(m_c(-1))=-1$, moreover
\bea \left\{\begin{array}{ll} \lambda_-(m_c)>0 & \quad
           {\mathrm if}\; m_c\in [0,m_c(0)),  \\
           \\
       -1<\lambda_-(m_c)<0  & \quad
           {\mathrm if}\; m_c\in (m_c(0),m_c(-1)),              \\
           \\
 -\frac{9}{8}<\lambda_-(m_c)<-1, & \quad {\mathrm if}\; m_c\in (m_c(-1),+\infty).\end{array}\right. \lb{5.16} \eea
Since $\lambda_1(R)=1+\lambda_-(m_c)$, it is obvious that Theorem
\ref{thm1.3} and Theorem \ref{thm1.3} imply Corollary
\ref{cor1.5}.

\begin{rem} \label{r5.1}
Inspired by the recent results obtained in \cite{HLS},  we conjecture
that the nondegenerate  minimal central configuration is  collision
nondegenerate  and satisfies \bea i(V_d;l_+)=0.\eea

In a private communication with the first name author, prof. Y. Long posed the 
following conjecture.
\begin{quote}
A smooth $T$-periodic non-collision solution of the planar $N$-body problem, with $N > 3$, is a  smooth global
 minimizer of the action functional on the space of all $T$-periodic orbits having non-trivial winding number
% %  
% %  topologically non-trivial (say, given by non-trivial rotation numbers) 
  if and only if it is an elliptic motion corresponding to the global minimal central
 configuration of the potential restricted to the inertia ellipsoid. 
% %  which is defined to be the global minimizer of the potential function restricted to the
% %  hypersurface of constant moment of inertia."
\end{quote}
We observe that, Long's conjecture implies  that,  for any $e\in[0,1)$  
the Morse index for  the ERE of  any non-degenerate minimal central configuration is
$0$.  In the case of brake symmetric  central
configurations, Long's conjecture, imply our conjecture in
the collision nondegenerate case.
This conjecture is still open, and  in the case $e=0$, an interesting 
result from  Chenciner and  Desolneux   shows that  the  minima of the 
action on the zero mean loop space,  is the relative equilibrium  
corresponding to a minimal central configuration.
\cite{CD}. 
\end{rem}

\subsection{Stability analysis of Euler orbits}

The Euler orbits have been studied in \cite{MSS1}, \cite{LZhou}, in this
case,  $R=diag(-\delta,2\delta+3)$, where $\delta\in[0,7]$ only
depends on mass $m_1,m_2,m_3$. Please refer to Appendix A of \cite{MSS1}
for the details.  Although there is no physical meaning for $\delta>7$, we will
assume $\delta\geq 0$ to make the mathematical theory complete.

We will use the index theory to study the stability problem. Let $\ga_{\delta,e}$ be the fundamental solutions of 
$\mathcal{B}(t)$ which is given by (\ref{msf}), that is 
$\dot{\ga}_{\delta,e}=J_2\mathcal{B}(t)\ga_{\delta,e}$, $t\in[0,2\pi]$, $\ga_{\delta,e}(0)=I_4$.   
For $\delta=0$, the system  degenerates to the Kepler problem, and it
has been studied in \cite{HS1}, which  proved 
that $\ga_{0,e}(2\pi)$ with normal form $\left(\begin{array}{cc} 1 & 1 \\ 0 & 1 \end{array}\right)\diamond
I_2$ and
\bea i_{\omega}(\ga_{0,e})=\left\{\begin{array}{ll} 0, & \quad
           {\mathrm if}\; \omega=1,  \\
           \\
 2, & \quad {\mathrm if}\; \omega\in\U\setminus \{1\} .\end{array}\right.\lb{e1}\eea
The Maslov-type index in the case $\delta>0$ has been studied by Long and Zhou \cite{LZhou},  then we
review firstly their results. For any $j\in\mathbb{N}$, there exists
$1$-degenerate curves $\Gamma_j=Gr(\vf_j(e))$, and we also let
$\Gamma_0=Gr(\vf_0(e))$ with $\vf_0(e)=0$.   Then $\ga_{\delta,e}$
only degenerates at $ \cup_{j=1}^\infty \Gamma_j $ and
$dim\ker(\ga_{\delta,e}(2\pi)-I_4)=2 $ if
$(\delta,e)\in\cup_{j=1}^\infty \Gamma_j$. The Maslov-type index
satisfies \bea i_1(\ga_{\delta,e})=2j+3, \quad if\quad
\vf_j(e)<\delta\leq\vf_{j+1}(e),\quad j\in \mathbb{N}\cup\{0\}.
\eea  Similarly, for $\forall j\in\mathbb{N}$, there exists pair
$-1$-degenerate curves $\Upsilon_j^\pm=Gr(\psi_j^\pm(e))$.

 Let $\psi_j^s(e)=min\{\psi_j^+(e),\psi_j^-(e)\}$ and
$\psi_j^l(e)=max\{\psi_j^+(e),\psi_j^-(e)\}$. Moreover, we set
$\psi_0^l=\psi_0^s=0$,  then for $k\in\mathbb{N}$ we have \bea
i_{-1}(\ga_{\delta,e})=\left\{\begin{array}{ll} 2j, & \quad
           {\mathrm if}\; \delta\in (\psi_{j-1}^l,\psi_{j}^s],  \\
           \\
 2j+1, & \quad {\mathrm if}\; \delta\in(\psi_j^s,\psi_{j}^l] .\end{array}\right.\lb{l3}\eea
Direct computation shows that $\psi_j^+(0)=\psi_j^-(0)$, but it is not
clear if, for $e>0$,  there exist other intersection points.  There
is a monotonicity property for Maslov-type index, that is for
$\omega\in\mathbb{ U}$ \bea i_\omega(\ga_{\delta_1,e})\leq
i_\omega(\ga_{\delta_2,e}),\,\ if \,\ \delta_1\leq \delta_2.
\lb{mono}\eea The $\pm1$ degenerate curves satisfies \bea
0<\psi_1^s\leq\psi_1^l<\vf_1<\psi_2^s\leq\psi_2^l<\cdots
\psi_j^s\leq\psi_j^l<\vf_j<\psi_j^s\leq\psi_j^l<\cdots . \lb{e4}
\eea
Moreover for the region between the $\pm1$-degenerate curves, $\ga_{\delta,e}(2\pi)$ is elliptic-hyperbolic
and for  the region between the pairs of  $-1$-degenerate curves $\ga_{\delta,e}(2\pi)$
is hyperbolic.

As a continuous work of Long and Zhou \cite{LZhou}, we use the   near-collision index
 to study the limit case.
 From Theorem 1.1, we have
\begin{thm}\label{thm5.1} If $\delta\in(1/8,7]$,   let $\varepsilon=\frac{1}{2}\min \{\frac{\delta}{2\delta+5}, 1/8  \}$,  
$\hat{e}=\frac{1-e^2}{2}$. For $\hat{e}<\varepsilon^3$, we have   
\bea i_1(\ga_e)\geq \frac{2}{\pi}(\delta-\frac{1}{8})^{\frac{1}{2}} \ln\(\frac{\varepsilon^2}{\sqrt{\hat{e}}}\)-6. \lb{ifty}  \eea
%then \bea \lim_{e\rightarrow1}i_{\pm1}(\ga_{\delta,e})=\infty,\lb{ifty} \eea and
Hence
\bea \sup\{
\overline{\lim}_{e\to1}\vf_j(e),\overline{\lim}_{e\to1}\psi^\pm_j(e),j\in\mathbb{N}\}\leq1/8.
\lb{th5.2.1} \eea
\end{thm}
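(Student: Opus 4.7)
The plan is to recognize that Theorem~\ref{thm5.1} is a direct specialization of Theorem~\ref{thm1.1} to the two-dimensional Euler setting, combined with the monotonicity \eqref{mono} of the Maslov-type index in $\delta$. First I would identify the regularized Hessian: for Euler collinear configurations $R=\mathrm{diag}(-\delta,2\delta+3)$, and since $\delta>0$ forces $-\delta<2\delta+3$, the smallest eigenvalue is $\lambda_1(R)=-\delta$. By \eqref{R}, $\lambda_1(R)=\lambda_1(D^2U|_\mathcal{E}(a_0))/U(a_0)$, so the hypothesis of Theorem~\ref{thm1.1} is equivalent to $\delta>1/8$. Writing $\lambda_1(R)=-1/8-r_1$ gives $r_1=\delta-1/8>0$, and the $\varepsilon$ appearing in the statement of Theorem~\ref{thm5.1} coincides exactly with the one prescribed by Theorem~\ref{thm1.1}. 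Substituting these identifications into \eqref{t1.01} yields \eqref{ifty} at once.

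For the estimate \eqref{th5.2.1} on the degenerate curves, I would fix an arbitrary $\delta_0>1/8$; by \eqref{ifty}, $i_1(\ga_{\delta_0,e})\to\infty$ as $e\to 1$. Suppose, for contradiction, that $\limsup_{e\to 1}\vf_j(e)>1/8$ for some $j\in\mathbb{N}$. Then one may choose $\delta_0\in(1/8,\limsup_{e\to 1}\vf_j(e))$ together with a sequence $e_n\to 1$ along which $\vf_j(e_n)>\delta_0$. By the piecewise-constant description of $\delta\mapsto i_1(\ga_{\delta,e})$ recalled above (value $2j'+3$ on $(\vf_{j'}(e),\vf_{j'+1}(e)]$) together with monotonicity \eqref{mono}, the condition $\delta_0\leq\vf_j(e_n)$ delivers the uniform bound $i_1(\ga_{\delta_0,e_n})\leq 2j+1$, directly contradicting $i_1(\ga_{\delta_0,e_n})\to\infty$. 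Since this reasoning is available for every $\delta_0>1/8$, one concludes $\limsup_{e\to 1}\vf_j(e)\leq 1/8$. The identical argument, now based on \eqref{l3}, handles the $-1$-degenerate curves $\psi_j^\pm$ and so yields \eqref{th5.2.1}.

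The proof is essentially bookkeeping once Theorem~\ref{thm1.1} is in hand; the only subtle point to watch is the need for a \emph{uniform} (in $n$) upper bound for $i_1(\ga_{\delta_0,e_n})$ along the contradicting sequence, which is supplied cleanly by the piecewise structure of $i_1$ on the complement of the curves $\cup_j\Gamma_j$, or equivalently by \eqref{mono} applied at $\delta_0<\vf_j(e_n)$.
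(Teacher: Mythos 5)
Your proposal matches the paper's proof essentially verbatim: the first part is the specialization of Theorem \ref{thm1.1} with $\lambda_1(R)=-\delta$ and $r_1=\delta-\tfrac18$, and the second part is the same contradiction argument combining $i_1(\ga_{\delta_0,e_l})\to\infty$ with the monotonicity \eqref{mono} and the bound $i_1(\ga_{\delta_0,e_l})\le 2j+1$ when $\delta_0<\vf_j(e_l)$. The only quibble is your claim that the $\varepsilon$ of Theorem \ref{thm5.1} ``coincides exactly'' with the one prescribed by Theorem \ref{thm1.1}: substituting $r_1=\delta-\tfrac18$ into the latter gives $\tfrac12\min\{\tfrac{\delta-1/8}{2\delta+19/4},\tfrac18\}$ rather than $\tfrac12\min\{\tfrac{\delta}{2\delta+5},\tfrac18\}$, a small discrepancy that is already present in the paper's own statement and does not affect the argument.
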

\begin{proof} Since $R=diag(-\delta,2\delta+3)$, then for
$\delta>1/8$, $\lambda_1(R)<-1/8$. Now equation (\ref{ifty}) follows from (\ref{t1.01}) of Theorem
\ref{thm1.1}. To prove (\ref{th5.2.1}), let
$\hat{\delta}_j=\overline{\lim}_{e\to1}\vf_j(e)$, then there exists
$e_l\to1$ such that $\lim_{l\to\infty}\vf_j(e_l)=\hat{\delta}_j$. If
$\hat{\delta}_j>1/8$, then we choose
$\delta_\varepsilon\in(1/8,\hat{\delta}_j)$, for $l$ large enough,
$\delta_\varepsilon<\vf_j(e_l)$, by the monotone property
(\ref{mono}),  so we have $i_1(\ga_{\delta_\varepsilon,e_l})\leq 2j+1 $,
which contradict to (\ref{ifty}). The proof for
$\overline{\lim}_{e\to1}\psi^\pm_j(e)\leq1/8$  is similar.

\end{proof}

Let $N=diag(1,-1)$, then $NR=RN$, we will compute the collision
index for $\delta\in(0,1/8)$ by the decomposition property.  By the
brake symmetry, from (\ref{kd}) we have \bea
dim\ker(\ga_{\delta,e}(2\pi)+1)=dim(V^-(\hat{N})\cap\ga_{\delta,e}(2\pi)V^+
(\hat{N}))+dim(V^+(\hat{N})\cap\ga_{\delta,e}(2\pi)V^-(\hat{N})).\nonumber\eea
We always set $\psi_k^+$  to be the degenerate curve in the sense that
$V^+(\hat{N})\cap\ga_{\delta,e}(2\pi)V^-(\hat{N})$ nontrivial and
similarly $\psi_k^-$ to be the degenerate curve in the sense that
$V^-(\hat{N})\cap\ga_{\delta,e}(2\pi)V^+(\hat{N})$ nontrivial.

We get the collision index on $l^-_+$ numerically from (\ref{pos})
and the step in Remark \ref{r3.e}.  With the help of matlab, we have
\\
{\bf Numerical result A}: For the Euler orbits is collision
nondegenerate for  $\delta\in(0,\frac{1}{8})$, and on $l^-_+$  \bea
i_-(V^+(\hat{N});l^-_+)=i_-(V^-(\hat{N});l^-_+)=1. \lb{e10} \eea

It is obvious that $\phi(R)=1$ for $\delta\in(0,\frac{1}{8})$,
then from (\ref{f3}), we have

\begin{cor}\label{prop4.3.1} For $\delta\in(0,\frac{1}{8})$, under the condition of numerical result A, 
we have  \bea  \lim_{e\rightarrow1}i_1(\ga)=\lim_{e\rightarrow1}i_{-1}(\ga)=3. \lb{eee} \eea  \end{cor}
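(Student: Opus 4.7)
The plan is to view this corollary as a direct computational consequence of the brake–symmetric approximation formula (\ref{f3}) in Theorem \ref{th4.1}, combined with the explicit numerical input (\ref{e10}). First I would verify that all the hypotheses of Theorem \ref{th4.1} are met in this regime. Since $R=\mathrm{diag}(-\delta,2\delta+3)$ with $\delta\in(0,1/8)$, we have $\lambda_1(R)=-\delta\in(-1/8,0)$, so the hyperbolicity condition $\lambda_1(R)>-1/8$ holds and $P_\pm$ are hyperbolic equilibria. Moreover neither eigenvalue of $R$ vanishes on this interval, so $R$ is nondegenerate; the Euler configuration carries the brake symmetry $N=\mathrm{diag}(1,-1)$ which manifestly commutes with $R$ and anticommutes with $\mathbb{J}$; and collision nondegeneracy on $l_+$ is exactly the content of Numerical Result A.

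Next I would compute $\phi(R)$ directly: on $(0,1/8)$ the eigenvalues of $R$ are $-\delta<0$ and $2\delta+3>0$, so $\phi(R)=1$. Plugging this and the two half-clinic indices $i_-(V^\pm(\hat{N});l_+^-)=1$ from Numerical Result A into formula (\ref{f3}) gives
\begin{equation}
\lim_{e\to1}i_1(\hat{\gamma}_e)=\lim_{e\to1}i_{-1}(\hat{\gamma}_e)=\phi(R)+i_-(V^+(\hat{N});l_+^-)+i_-(V^-(\hat{N});l_+^-)=1+1+1=3.
\nonumber
\end{equation}
Finally, by the identity (\ref{equindex}), $i_\omega(\gamma_e)=i_\omega(\hat{\gamma}_e)$ for every $\omega\in\mathbb{U}$ and $e\in[0,1)$, so passing to the limit yields $\lim_{e\to1}i_1(\gamma_e)=\lim_{e\to1}i_{-1}(\gamma_e)=3$, as claimed.

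There is essentially no obstacle at this stage: all of the hard analytic work (the approximation theorem, the brake–symmetric decomposition of the $\pm 1$–Maslov indices, and the nonnegativity lemma for $i_-(V^\pm(\hat{N}))$) has already been carried out, and the remaining content is the verification that $\delta\in(0,1/8)$ places us precisely in the hypotheses of Theorem \ref{th4.1} together with the arithmetic $1+1+1=3$. The only ingredient that is not analytically proved is the value of the two half-clinic indices, which is supplied by the numerical computation and is explicitly invoked as a hypothesis of the corollary; this is also what makes the statement consistent with Theorem \ref{thm5.1}, since having a finite limit $3$ for $\delta<1/8$ contrasts with the divergence $i_1(\gamma_e)\to\infty$ for $\delta>1/8$ guaranteed by Theorem \ref{thm1.1}.
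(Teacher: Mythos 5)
Your proposal is correct and follows essentially the same route as the paper: the paper derives the corollary by observing that $\phi(R)=1$ for $\delta\in(0,\tfrac18)$ and substituting this together with the two half-clinic indices from Numerical Result A into formula (\ref{f3}) of Theorem \ref{th4.1}, exactly as you do. Your additional verification of the hypotheses of Theorem \ref{th4.1} (hyperbolicity, nondegeneracy of $R$, brake symmetry with $N=\mathrm{diag}(1,-1)$) and the explicit appeal to (\ref{equindex}) to pass from $\hat{\gamma}_e$ to $\gamma_e$ are left implicit in the paper but are the right bookkeeping.
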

From (\ref{i02}-\ref{i03}), easy computation shows that for
$\delta\in(0,\frac{1}{8})$,  $$i(V^+(\hat{N}),V^-(\hat{N});l^-_0)=1,  \quad
i(V^-(\hat{N}),V^+(\hat{N});l^+_0)=0.$$ So for $1-e$ small
enough, \bea \mu(V^+(\hat{N}),\ga_{\delta,e}
V^-(\hat{N}))=2,\lb{ed1} \eea \bea \mu(V^-(\hat{N}),\ga_{\delta,e}
V^+(\hat{N}))=1. \lb{ed2}\eea

\begin{thm}\label{thm521} Under the assumption of  numerical fact A, for the $\pm1$-degenerate curve, we have
\bea
\lim_{e\rightarrow1}\vf_j(e)=\lim_{e\rightarrow1}\psi^\pm_{j+1}(e)=1/8,
\,\ for \,\ j\in\mathbb{N}, \lb{l1} \eea \bea
\lim_{e\rightarrow1}\psi^+_{1}(e)=0, \quad
\lim_{e\rightarrow1}\psi^-_{1}(e)=1/8. \lb{l2}\eea       \end{thm}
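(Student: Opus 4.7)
The argument splits naturally by index $j$. For the curves $\vf_j$ with $j\ge 1$, the upper bound $\limsup_{e\to 1}\vf_j(e)\le 1/8$ is Theorem \ref{thm5.1}. For $j=1$, Corollary \ref{prop4.3.1} gives $\lim_{e\to 1}i_1(\ga_{\delta,e})=3$ for every $\delta\in(0,1/8)$; since the Long--Zhou formula forces $i_1=3$ exactly on $0<\delta\le\vf_1(e)$, we conclude $\vf_1(e)\ge\delta$ for $e$ close enough to $1$, so $\liminf_{e\to 1}\vf_1(e)\ge 1/8$. For $j\ge 2$ the ordering $\vf_{j-1}(e)<\vf_j(e)$ from (\ref{e4}) propagates the bound inductively. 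For $\psi_j^\pm$ with $j\ge 2$, the sandwich $\vf_{j-1}(e)<\psi_j^\pm(e)<\vf_j(e)$ from (\ref{e4}) combined with the previous case yields $\lim_{e\to 1}\psi_j^\pm(e)=1/8$ by squeeze.

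The remaining case $j=1$ is the delicate one, where $\psi_1^+$ and $\psi_1^-$ separate. Using the brake symmetry $N=\mathrm{diag}(1,-1)$ of Euler configurations, the plan is to introduce the partial Maslov indices
\[
\mu^\pm(\delta,e):=\mu\!\left(V^\pm(\hat{N}),\,\ga_{\delta,e}(\tau)V^\mp(\hat{N});\,\tau\in[0,\mathcal{T}]\right),
\]
which are integer-valued step functions of $\delta$ whose jumps occur at the degenerate curves $\psi_k^\pm(e)$, consistent with the decomposition $\dim\ker(\ga_{\delta,e}(\mathcal{T})+I)=\dim(V^+(\hat{N})\cap\ga_{\delta,e}(\mathcal{T})V^-(\hat{N}))+\dim(V^-(\hat{N})\cap\ga_{\delta,e}(\mathcal{T})V^+(\hat{N}))$. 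Formulas (\ref{ed1})--(\ref{ed2}), which rest on numerical fact A, give $\mu^+(\delta,e)=2$ and $\mu^-(\delta,e)=1$ for $1-e$ small (any fixed $\delta\in(0,1/8)$). At the Kepler end $\delta=0$, the system is $-1$-nondegenerate for every $e\in[0,1)$ and $i_{-1}(\ga_{0,e})=2$; since $\mu^\pm(0,e)$ is integer-valued and continuous in $e$, it is constant in $e$, and an explicit computation at $e=0$ against the mixed Dirichlet/Neumann boundary conditions of (\ref{4.15}) is expected to give $\mu^+(0,e)=\mu^-(0,e)=1$.

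Granting that $\mu^\pm(\cdot,e)$ are monotone nondecreasing (so that the jumps are $+1$ each), the conclusion follows by jump counting: $\mu^+$ grows from $1$ to $2$ in $(0,1/8)$ with a single jump at $\psi_1^+(e)$, while $\mu^-$ stays at $1$ on $(0,1/8)$, so $\psi_1^-(e)\ge 1/8$. Since $\mu^+(\delta_0,e)=2$ for every fixed $\delta_0>0$ once $e$ is close to $1$, the jump of $\mu^+$ collapses to the left endpoint, giving $\lim_{e\to 1}\psi_1^+(e)=0$. Symmetrically, $\mu^-(\delta_0,e)=1$ for every fixed $\delta_0\in(0,1/8)$ once $e$ is close to $1$, whence $\psi_1^-(e)>\delta_0$; together with the upper bound from Theorem \ref{thm5.1}, this yields $\lim_{e\to 1}\psi_1^-(e)=1/8$. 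The two main technical obstacles are the direct evaluation of $\mu^\pm(0,e)$ at the circular Kepler configuration and the monotonicity of $\mu^\pm(\cdot,e)$ in $\delta$: the latter does not follow immediately from Property VI because $R=\mathrm{diag}(-\delta,2\delta+3)$ is not a monotone family, and so far only the sum $\mu^++\mu^-=i_{-1}$ is known to be monotone by (\ref{mono}).
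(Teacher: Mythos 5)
Your overall strategy is the one the paper uses. The first half of your argument (the curves $\vf_j$ and $\psi_{j+1}^\pm$ for $j\in\mathbb{N}$) is correct and equivalent to the paper's: the paper runs the same contradiction between Corollary \ref{prop4.3.1} and the Long--Zhou index formula directly for every $j$, whereas you do it for $j=1$ and then propagate via the interlacing (\ref{e4}); both are fine. For the delicate case $\psi_1^\pm$ you also identify the right mechanism --- jump counting for the two partial indices $\mu^\pm(\delta,e)=\mu(V^\pm(\hat N),\ga_{\delta,e}(\tau)V^\mp(\hat N))$ against the limiting values $2$ and $1$ from (\ref{ed1})--(\ref{ed2}) --- but, as you say yourself, two steps are left open, and they are exactly the load-bearing ones.

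First, the anchor value. The paper does not anchor at the Kepler line $\delta=0$ as you propose; it anchors at the circular line $e=0$, where the system is autonomous after the standard rotating-frame reduction and one computes directly that $\mu(V^-(\hat N),\ga_{\delta,0}V^+(\hat N))=\mu(V^+(\hat N),\ga_{\delta,0}V^-(\hat N))=1$ for \emph{all} $\delta\in(0,1/8)$. This choice matters: your route through $\delta=0$ passes along the line where the first block of $R=\mathrm{diag}(-\delta,2\delta+3)$ degenerates (the paper even remarks at the end of \S 6 that the Kepler case is collision degenerate), and in any case you never actually evaluate $\mu^\pm$ there --- ``is expected to give $1$'' is the computation, not a placeholder for it. Without a proven base value the jump count does not start. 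Second, the monotonicity of $\delta\mapsto\mu^\pm(\delta,e)$. You are right that Property VI does not apply to the full $4\times4$ system because $-R=\mathrm{diag}(\delta,-2\delta-3)$ is not a monotone family, and the brake-symmetry splitting $\hat V^+\oplus\hat V^-$ that would isolate the monotone scalar block is only available on $l_0$ (where $q=0$ kills the Coriolis term), not for $\ga_{\delta,e}$ itself. The paper closes this by invoking the ``monotone property'' from the Long--Zhou framework \cite{LZhou} --- i.e., it is imported, not derived from Property VI --- so your observation that only the sum $\mu^++\mu^-=i_{-1}$ is covered by (\ref{mono}) is a fair criticism of the exposition, but in your write-up it remains an unproved hypothesis (``granting that\dots''). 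Until both the base value at $e=0$ and the monotonicity (equivalently, that each crossing of a $\psi_k^\pm$ curve contributes $+1$ to the corresponding partial index) are supplied, the conclusion $\lim_{e\to1}\psi_1^+(e)=0$, $\lim_{e\to1}\psi_1^-(e)=1/8$ does not follow.
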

\begin{proof} To prove (\ref{l1}), from Theorem \ref{thm5.1}, we only need to
show \bea \inf\{
\varliminf_{e\rightarrow1}\vf_j(e),\varliminf_{e\rightarrow1}\psi^\pm_{j+1}(e), j\in\mathbb{N}\}\geq1/8.
\lb{th5.2.1.1} \eea The proof of (\ref{th5.2.1.1}) is similar to
(\ref{th5.2.1}).
 Let   $\bar{\delta}_j=\varliminf\limits_{e\to1}\vf_j(e)$. If $\bar{\delta}_j<1/8$,
 then we choose $e_l\to1$, such that $\vf_j(e_l)\to\bar{\delta}_j$.
 Choose $\varepsilon<1/8-\bar{\delta}_j$, for $l$ large enough,
 $\varphi_j(e_l)<\bar{\delta}_j+\varepsilon$, so we have
 \bea i_1(\ga_{\bar{\delta}_j+\varepsilon,e_l})> i_1(\ga_{\vf_j(e_l),e_l})=2j+3,\eea
 which is contradict  to (\ref{eee}). It is totally similar that
 $\varliminf\limits_{e\rightarrow1}\psi^\pm_{j+1}(e)\geq
 1/8$ for $j\in \mathbb{N}$.

Direct computation shows that for $\delta\in(0,1/8)$,
$\mu(V^-(\hat{N}),\ga_{\delta,0}
V^+(\hat{N}))=\mu(V^+(\hat{N}),\ga_{\delta,0} V^-(\hat{N}))=1$. By
monotone property,  \bea \mu(V^+(\hat{N}),\ga_{\delta,e}
V^-(\hat{N}))=\left\{\begin{array}{ll} 1, & \quad
           {\mathrm if}\; \delta\in (0,\psi^+_{1}],  \\
           \\
 2, & \quad {\mathrm if}\; \delta\in(\psi^+_{1},\psi_{2}^+] .\end{array}\right.\lb{l3.1}\eea
 From (\ref{ed1}), we get $\lim_{e\rightarrow1}\psi^+_{1}(e)=0$. The
 proof for $\lim_{e\rightarrow1}\psi^-_{1}(e)=1/8$ is from (\ref{ed2}) and the step is similar.
\end{proof}

This theorem shows that the system is hyperbolic for
$\delta\in(0,\frac{1}{8})$, and $1-e$ small enough.
To explain  the results, we use the following pictures which are taken from \cite{MSS2}.
\begin{figure}[H]
\begin{minipage}[t]{0.5\linewidth}
\centering
\includegraphics[height=0.73\textwidth,width=0.98\textwidth]{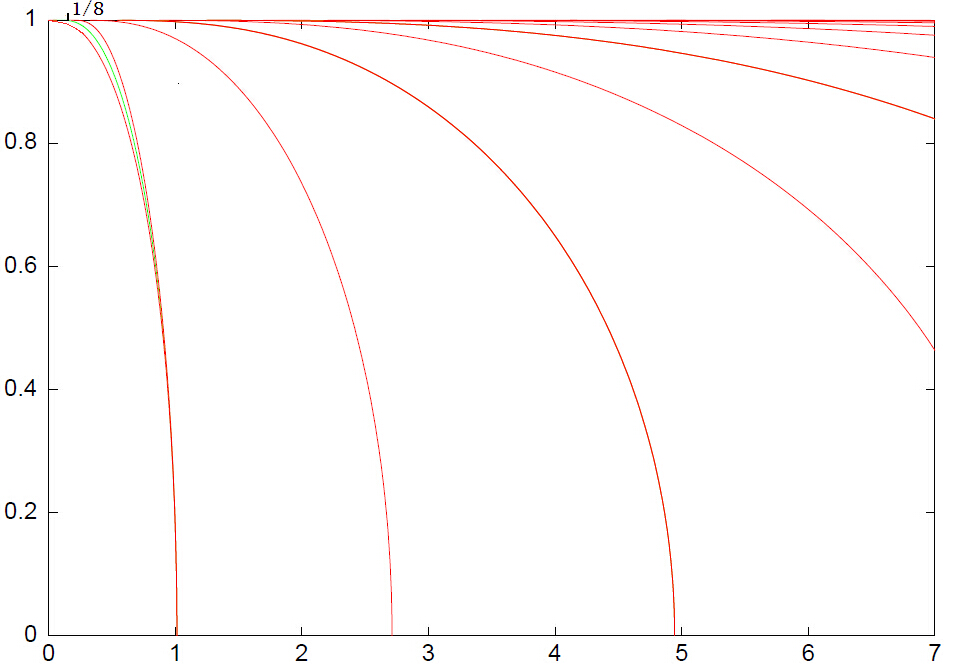}
     \caption{Stability bifurcation diagram. }
\label{fig:side:a4}
\end{minipage}%
\begin{minipage}[t]{0.5\linewidth}
\centering
\includegraphics[height=0.73\textwidth,width=1\textwidth]{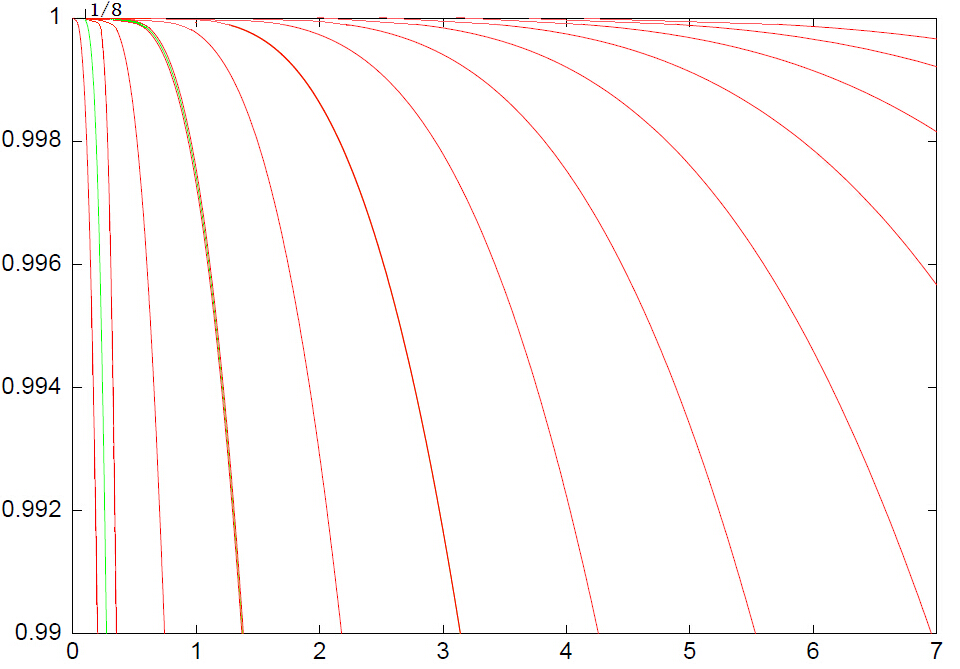}
     \caption{ A magnification of Figure 7 for $1-e$ small.}
\label{fig:side:b4}
\end{minipage}
\end{figure}

\section{Numerical results for collision index}

As shown in Remark  \ref{r3.e},  in order compute the collision index, 
we only need to count the zeros of a 
determinant function.  We use the exterior algebra representation from  \cite{CDB1,CDB2, CDB3}  
to do the computation.  
For reader's convenience, we give a brief review in the four-dimensional case, here.

Consider the linear system
\bea \dot{x}=A(\tau)x,\,\  x\in\mathbb R^4, \,\ \tau\in[0,+\infty),  \lb{6.1} \eea
where $A(+\infty)$ is hyperbolic.  Let $\wedge^2(\R^4)$ be the vector space of $2$-vector space in $\R^4$. 
Supposing $e_j, j=1,\dots,4$ is basis of $\R^4$, then $\hat{e}_1=e_1\wedge e_2$, $\hat{e}_2=e_1\wedge e_3$, 
$\hat{e}_3=e_1\wedge e_4$, $\hat{e}_4=e_2\wedge e_3$, $\hat{e}_5=e_2\wedge e_4$, $\hat{e}_6=e_3\wedge e_4$ 
is basis of $\wedge^2(\R^4)$.  There is a induced system from (\ref{6.1})
\bea   \dot{y}=A^{(2)}(\tau)y, \,\ y\in \wedge^2(\R^4). \lb{6.2}   \eea
Suppose $A=(a_{i,j})$, then $A^{(2)}$ could be expressed by $(a_{i,j})$. (Cfr. \cite[Equation (2.8)]{CDB3} for
the expression).  Let  $\sigma$ be the sum of the eigenvalues of $A(\infty)$ with positive real part.
Let $\hat{y}(\tau)=e^{-\sigma\tau}y(\tau)$,  then
\bea    {\frac{d\hat{y}}{d\tau}}=(A^{(2)}(\tau)-\sigma I_4 ) \hat{y}.         \lb{6.3}          \eea

 To compute the Maslov index $\mu(V_d,\ga(\tau)V_0)$,
we choose a basis $\xi_1(0),\xi_2(0)$ of $V_0$,  and 
let $\hat{y}(0)=y(0)=\xi_1(0)\wedge \xi_2(0)=\sum_{j=1}^6y_j(0)\hat{e}_j$. 
Then $\hat{y}(\tau)$ could be computed by matlab from Equation (\ref{6.3}). 
Let $\ga$ be the fundamental solution of (\ref{6.1}), 
then $\ga(\tau)V_0$ could be expressed by $\hat{y}(\tau)$.
We choose
$e_1,e_2$ to be the basis of $V_d$,  then  it is obvious that 
$V_d\cap\ga(\tau)V_0$ is nontrivial if and only if
$e_1\wedge e_2\wedge\hat{y}(\tau)=0$, which is equivalent to
 $\hat{y}_6(\tau)=0$.  So we can draw the picture of  $\hat{y}_6(\tau)$ 
 and count the number of zero points to get the Maslov index.

We will compute $ i_-(V^\pm(\hat{N});l^-_+)$ for Euler and Lagrangian orbits. From Lemma \ref{lem4.9},
 we only need to count the  points of $V_d\cap \ga(-\tau^\pm_j)V^\pm(\hat{N})$.
From (\ref{l1.0}),  the linear system with form  with $\dot{x}(\tau)=-J\hat{B}(-\tau)x(\tau) $,  
let $A(\tau)=-J\hat{B}(-\tau)$, 
then we can get $A^{(2)}(\tau)$.  We choose $e_1,e_4$ to be the basis of $V^+(\hat{N})$
and  $e_2, e_3$ to be the basis of $V^-(\hat{N})$. Let $\hat{y}^+(\tau)$ 
be the solution of (\ref{6.3}) with initial 
condition $\hat{y}(0)=\hat{e}_3$ and  $\hat{y}^-(\tau)$ be the solution 
with initial condition $\hat{y}(0)=\hat{e}_4$, 
then $ i_-(V^\pm(\hat{N});l^-_+)$ just is the zero points of $\hat{y}^\pm_6(\tau)$.

We firstly give some numerical pictures for Euler orbits:
\begin{figure}[H]
\begin{minipage}[t]{0.5\linewidth}
\centering
\includegraphics[height=0.73\textwidth,width=0.98\textwidth]{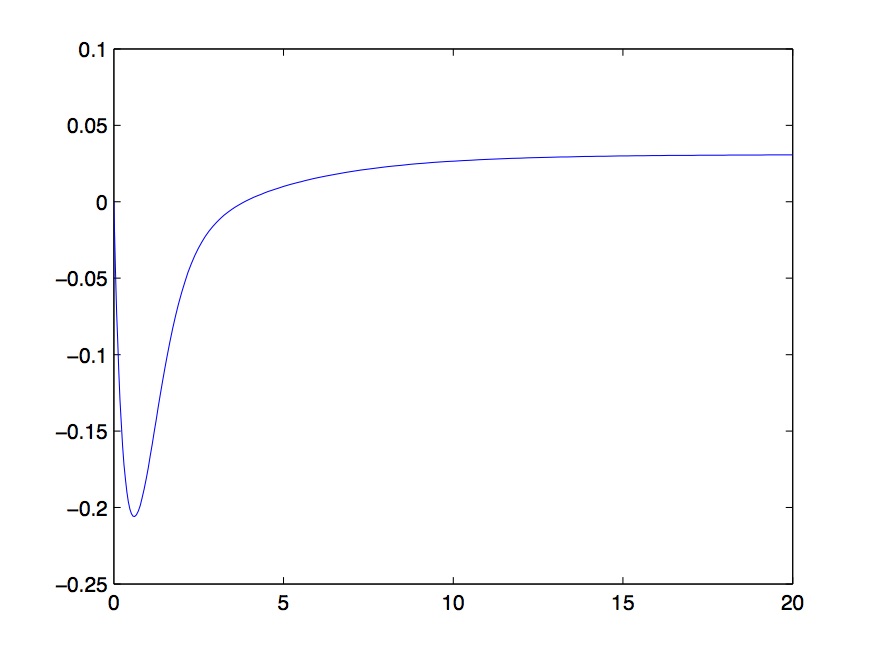}
     \caption{$\hat{y}^{+}_6(\tau)$ for $\delta=0.1$.}
\label{fig:side:a5}
\end{minipage}%
\begin{minipage}[t]{0.5\linewidth}
\centering
\includegraphics[height=0.73\textwidth,width=1\textwidth]{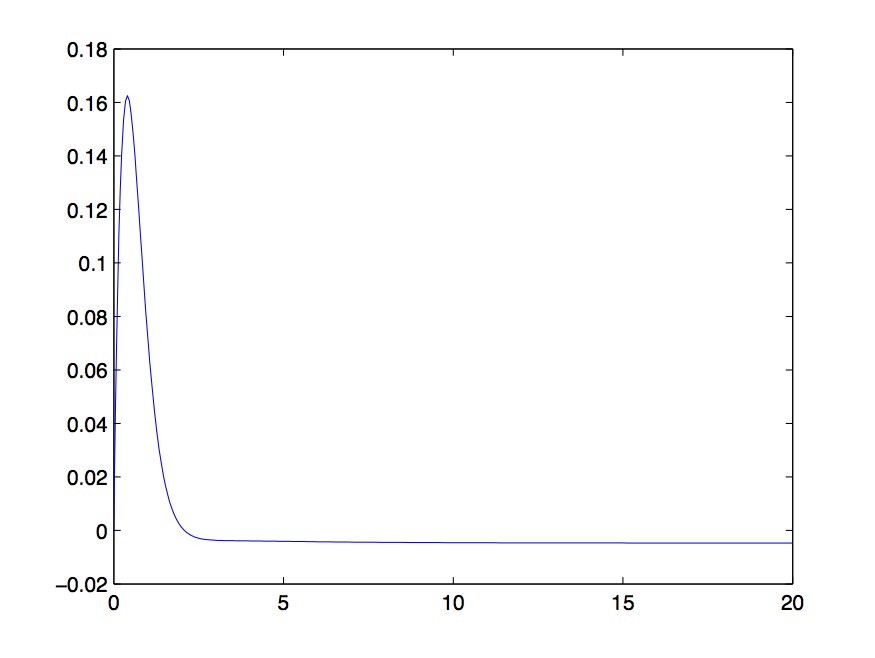}
     \caption{ $\hat{y}^{-}_6(\tau)$ for $\delta=0.1$.}
\label{fig:side:b5}
\end{minipage}
\end{figure}
It is obvious that there is only one zero point in Figure 
\ref{fig:side:a5} and Figure \ref{fig:side:b5}, 
and  we  have computed it for many value
of $\delta\in(0,1/8)$ and for time large as $\tau=1000$.  
All the pictures shows that there is only one zero point.  
This is why we gave Numerical result A.

\begin{figure}[H]
\begin{minipage}[t]{0.5\linewidth}
\centering
\includegraphics[height=0.73\textwidth,width=0.98\textwidth]{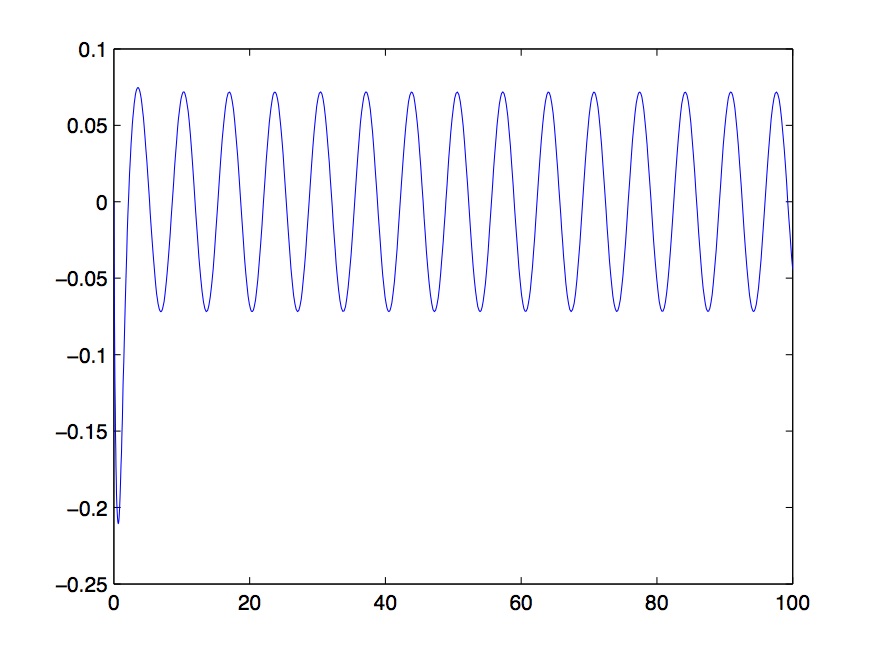}
     \caption{$\hat{y}^{+}_6(\tau)$ for $\delta=1$.}
\label{fig:side:a10}
\end{minipage}%
\begin{minipage}[t]{0.5\linewidth}
\centering
\includegraphics[height=0.73\textwidth,width=1\textwidth]{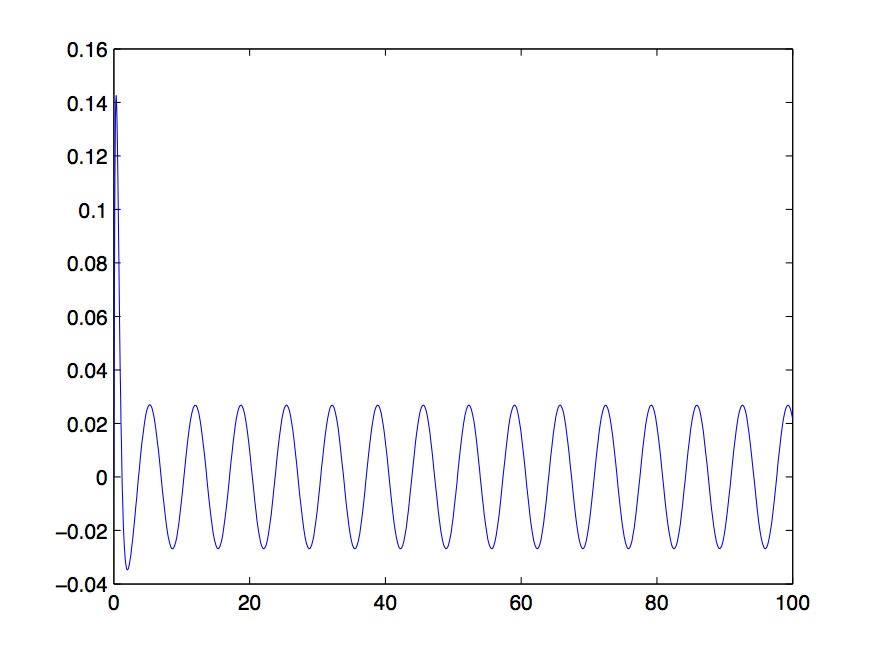}
     \caption{$\hat{y}^{-}_6(\tau)$ for $\delta=1$.}
\label{fig:side:b10}
\end{minipage}
\end{figure}
If $\delta>1/8$, Theorem \ref{thm1.1} shows that the collision index is infinity,  which is  corresponding to
the picture of  Figure \ref{fig:side:a10} and Figure \ref{fig:side:b10}  that the number of zero 
points  growth in direct proportion to the
time.

For the Lagrangian orbits,  Proposition  \ref{prop4.3} shows that the collision index is zero,  
which corresponds to  the following pictures showing    no zero point.
\begin{figure}[H]
\begin{minipage}[t]{0.5\linewidth}
\centering
\includegraphics[height=0.73\textwidth,width=0.98\textwidth]{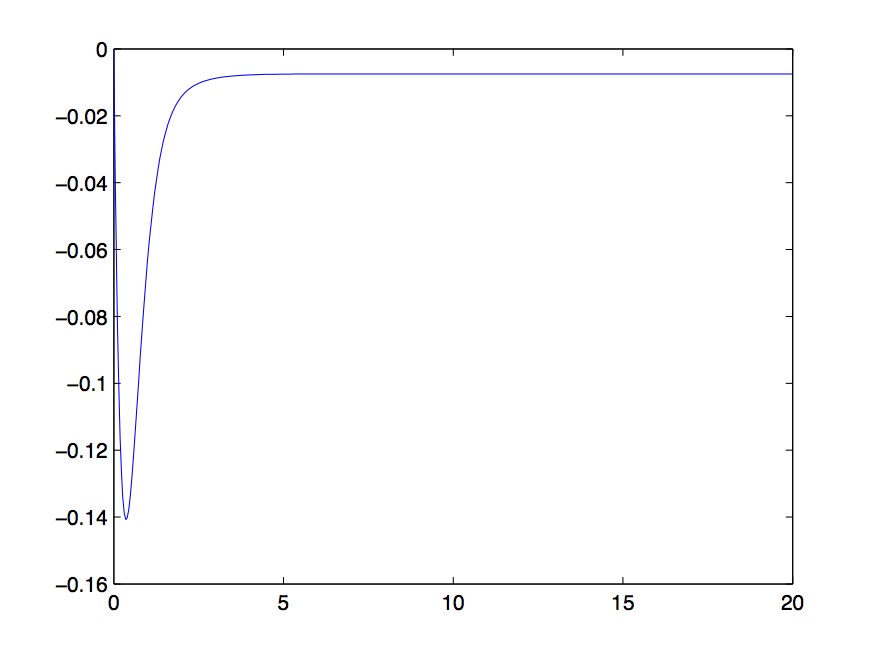}
     \caption{$\hat{y}^{+}_6(\tau)$ for $\beta=6$.}
\label{fig:side:a11}
\end{minipage}%
\begin{minipage}[t]{0.5\linewidth}
\centering
\includegraphics[height=0.73\textwidth,width=1\textwidth]{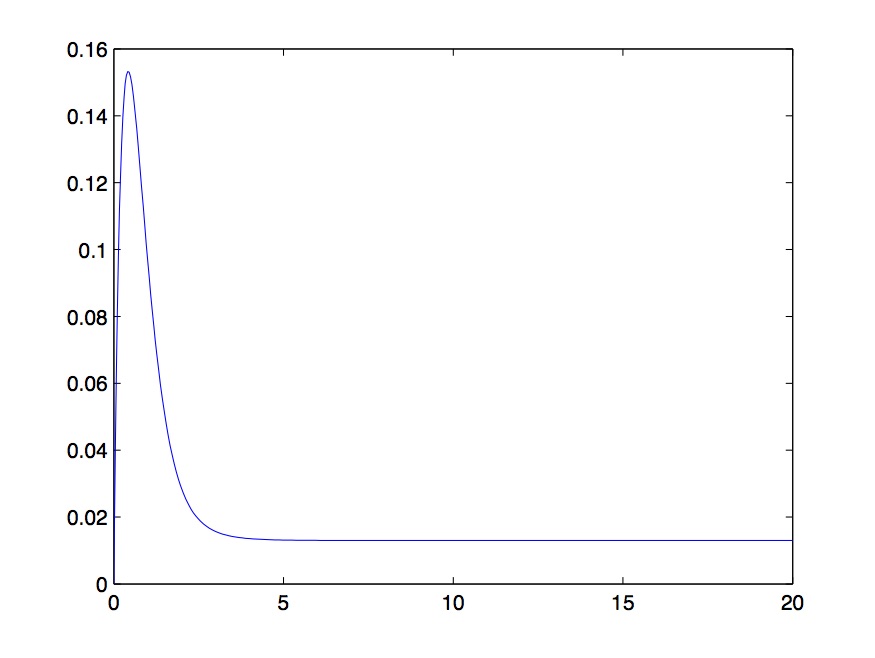}
     \caption{$\hat{y}^{-}_6(\tau)$ for $\beta=6$.}
\label{fig:side:b11}
\end{minipage}
\end{figure}
It is quite difficult in a concrete situation  to establish if an orbit is collision 
non-degenerate. However  if the the collision index depends 
on one  parameter  having  a jump, then there is a collision degenerate point.  
During our computations, we found that the 
Kepler case ($\delta=0$) is collision degenerate. We guess that  
every  nondegenerate central configuration satisfying the condition 
$\lambda_1(R)>-1/8$ is collision nondegenerate.

\medskip

\noindent {\bf Acknowledgements.} The  authors sincerely thank Y.
Long, S. Sun, G. Yu and Q. Zhou for the helpful discussion  for stability
of $n$-body problem.  X. Hu  sincerely thanks S. Terracini,
A. Portaluri, V. Barutello for the discussion on collision index. 
We sincerely thanks the referee for carefully reading 
and for the valuable suggestions that improved  the presentation of the paper. 
We especially  sincerely thanks  A. Portaluri 
for reading and commenting  on a draft of this paper.


\begin{thebibliography}{99}


%\parskip=-1mm

%\nobreak


\bibitem{AM} A. Abbondandolo, P. Majer,  Ordinary differential operators in Hilbert spaces and Fredholm pairs,  Math. Z. 243 (2003), 525-562 .

\bibitem{Ab} A. Albouy, H.E. Cabral,  A. Santos,  Some problems on the classical n-body problem,  Celestial Mech. Dynam. Astronom. 113 (4) (2012) 369-375.

\bibitem{Ar} V. I. Arnold, On a characteristic class entering
into conditions of quantization, Functional analysis and
applications 1(1967) 1-8.

\bibitem{BS} V. Barutello, S. Secchi, Morse index properties of colliding solutions to the N-body problem,   Ann. IHP. Anal. non Lin\'eaire 25 (2008) 539-565.
\bibitem{BHPT} V. Barutello,  X. Hu,  A. Portaluri and S.Terracini,  An index theorem for colliding solutions in the $N$-body type problem. Preprint 2015.

\bibitem{CLM} S. E. Cappell, R. Lee and E. Y. Miller, On the Maslov index,
Comm. Pure Appl. Math.  47 (2) (1994) 121-186.



\bibitem{CDB1}    F. Chardard, F. Dias, T. J. Bridges, Computing the Maslov index of solitary waves, Part 1: Hamiltonian systems on a four-dimensional phase space, Phys. D. 238(18) (2009) 1841-1867.
\bibitem{CDB2}    F. Chardard, F. Dias, T.J. Bridges, Computing the Maslov index of solitary waves, Part 2: Phase space with dimension greater than four, Phys. D 240 (17) (2011) 1334-1344.

\bibitem{CDB3}    F. Chardard, F. Dias, T.J. Bridges, Computational aspects of the Maslov index of solitary waves, 2009, pp. 1-59. http://hal.archives-ouvertes.fr/hal-00383888/fr.


\bibitem{CHu} C. Chen and X. Hu, Maslov index for homoclinic
orbits of Hamiltonian systems,  Ann. IHP. Anal. non Lin\'eaire 24
(2007) 589-603.



\bibitem{CD} A. Chenciner, N. Desolneux,  Minima de l'inthgrale d'action et \'equilibres relatifs
de $n$ corps. (French) [Minima of the action integral and relative equilibria of n bodies] C. R. Acad. Sci. Paris SŽr. I Math. 326 (10) (1998) 1209-1212.




\bibitem{CH}  R. Courant, D.  Hilbert,   Methods of mathematical physics. Vol. I. Interscience Publishers, Inc., New York, N.Y., 1953. \bibitem{Dan}J. M. A. Danby, The stability of the triangular
Lagrangian point in the general problem of three bodies,
 Astron. J. 69. (1964) 294-296.
\bibitem{Eu} L. Euler, De motu restilineo trium corporum se mutus attrahentium,  Novi Comm. Acad. Sci. Imp. Petrop.
11 (1767) 144-151.

\bibitem{Ga} M. Gascheau,  Examen d'une classe d'\'{e}quations
diff\'{e}rentielles et application \`{a} un cas particulier du
probl\`{e}me des trois corps,  Comptes Rend. 16 (1843) 393-394.



\bibitem{HP} X. Hu,  A. Portaluri, An Index theory for unbounded motions 
of Hamiltonian systems,  preprint 2015.

\bibitem{HLS}X. Hu, Y. Long, S. Sun, Linear stability of elliptic Lagrangian solutions of the planar three-body problem via index theory, Arch. Ration. Mech. Anal. 213 (3)(2014) 993-1045.


\bibitem{HS} X. Hu and S. Sun, Index and stability of symmetric periodic orbits in Hamiltonian systems
 with its application to figure-eight orbit, Commun. Math. Phys. 290 (2009) 737-777.

\bibitem{HS1}X. Hu and S. Sun, Morse index and stability of elliptic Lagrangian
solutions in the planar three-body problem,  Adv. Math 223 (2010)
98-119.

\bibitem{HO} X. Hu, Y. Ou, An Estimation for the Hyperbolic Region of Elliptic Lagrangian Solutions in the Planar Three-body Problem,  Regular and Chaotic Dynamics 18 (6) (2013) 732-741.


\bibitem{HOW} X. Hu, Y. Ou and P. Wang, Trace formula for linear Hamiltonian systems with its applications to elliptic Lagrangian solutions,  Arch. Ration. Mech. Anal. 216 (1)(2015) 313-357.


\bibitem{Lag} J. L. Lagrange,  Essai sur le probl\`{e}me des
trois corps. Chapitre II. {\OE}uvres Tome {6}, Gauthier-Villars,
Paris. (1772) 272-292.
\bibitem{LiuZ} C. Liu and D.  Zhang,  Iteration theory of L-index and multiplicity of brake orbits, J. Differential Equations 257(4) (2014) 1194-1245.


\bibitem{Lon4} Y. Long, Index Theory for Symplectic Paths with
Applications, Progress in Math., Vol.207, Birkh\"auser. Basel. 2002.

 \bibitem{LZ} Y. Long and C. Zhu, Maslov-type index theory for symplectic paths and spectral flow.
 II,
 Chinese Ann. Math., Ser. B  21 (1)  (2000) 89-108.


 \bibitem{LZZ} Y. Long, D. Zhang, C. Zhu,  Multiple brake orbits in bounded convex symmetric domains,  Adv. Math. 203 (2) (2006) 568-635.
\bibitem{LZhou} Y. Long, Q. Zhou,  Maslov-type indices and linear stability of elliptic Euler solutions of the three-body problem, Preprint 2015.


\bibitem{MSS1} R. Mart\'{\i}nez, A. Sam\`{a}, C. Sim\'{o},
Stability diagram for 4D linear periodic systems with applications
to homographic solutions,  J. Diff. Equa. 226 (2006) 619-651.

\bibitem{MSS2} R. Mart\'{\i}nez, A. Sam\`{a}, C. Sim\'{o}, Analysis of
the stability of a family of singular-limit linear periodic systems in
$\mathbb{R}^4$. Applications,  J. Diff. Equa. 226 (2006) 652-686.

\bibitem{Mc} R. McGehee, Triple collision in the collinear three-body problem, Inv. Math, 27 (1974) 191Ð
227.
\bibitem{MS} K. R. Meyer, D. S. Schmidt, Elliptic relative equilibria in
the N-body problem,  J. Diff. Equa. 214 (2005) 256-298.


\bibitem{M0} R. Moeckel, Chaotic dynamics near triple collision, Arch. Rat. Mech.  107 (1)(1989) 37-69.

\bibitem{M1}  R. Moeckel,  Linear stability of relative equilibria with a dominant mass,  J. Dynam. Differential Equations 6 (1)(1994) 37-51.

\bibitem{M2}   R. Moeckel, Linear stability analysis of some symmetrical classes of relative equilibria. Hamiltonian dynamical systems (Cincinnati, OH, 1992), 291-317, IMA Vol. Math. Appl., 63, Springer, New York, 1995.

\bibitem{M3}  R. Moeckel, On central configurations. Math. Z. 205 (4) (1990)  499-517.


\bibitem{Ou} Y.Ou,   Hyperbolicity of elliptic Lagrangian orbits
in planer three body problem.  Sci. China Math. 57 (7) (2014) 1539-1544.

\bibitem{RS1} J. Robbin and D. Salamon, The Maslov index for
paths, Topology  32 (1993) 827-844.


\bibitem{R1} G. E. Roberts, Linear stability of the elliptic Lagrangian
triangle solutions in the three-body problem,  J. Diff. Equa.
182 (2002) 191-218.

\bibitem{R2} G. E. Roberts, Spectral instability of relative equilibria in the planar $n$-body problem,  Nonlinearity 12 (4) (1999) 757-769.

\bibitem{R3} G. E. Roberts, Linear stability in the $1+n$-gon relative equilibrium. Hamiltonian systems and celestial mechanics (P\'{a}tzcuaro, 1998), 303-330, World Sci. Monogr. Ser. Math., 6, World Sci. Publ., River Edge, NJ, 2000.


\bibitem{Routh}  E. J. Routh, On Laplace's three particles with
a supplement on the stability or their motion,  Proc. London Math.
Soc. 6 (1875) 86-97.


\end{thebibliography}
\end{document}